\documentclass[11pt]{amsart}

\usepackage[T1]{fontenc}
\usepackage[utf8]{inputenc}
\usepackage{amsmath,amssymb,mathtools}
\usepackage{enumitem}
\usepackage{graphicx}
\usepackage{microtype}
\usepackage[hidelinks]{hyperref}

\newtheorem{theorem}{Theorem}[section]
\newtheorem{proposition}[theorem]{Proposition}
\newtheorem{lemma}[theorem]{Lemma}
\newtheorem{corollary}[theorem]{Corollary}
\theoremstyle{definition}
\newtheorem{definition}[theorem]{Definition}
\newtheorem{example}[theorem]{Example}
\theoremstyle{remark}
\newtheorem{remark}[theorem]{Remark}

\newcommand{\R}{\mathbb R}

\newcommand{\intt}{\operatorname{int}}
\newcommand{\bd}{\operatorname{bd}}

\newcommand{\cone}{\operatorname{cone}}

\newcommand{\cF}{\mathcal F}

\begin{document}

\title[Directionally Fine Families of Cones]{Directionally Fine Families of Cones:\\
Construction Principles, Approximation, and Separation}

\author{Fernando García-Castaño}
\address{Department of Mathematics, University of Alicante, Alicante, Spain}
\email{fernando.gc@ua.es}
\thanks{ORCID: 0000-0002-8352-8235}

\author{Miguel Ángel Melguizo-Padial}
\address{Department of Mathematics, University of Alicante, Alicante, Spain}
\email{ma.mp@ua.es}
\thanks{ORCID: 0000-0003-0303-791X}

\date{September 2026}

\begin{abstract}
We introduce the notion of a directionally fine family of cones, a local
geometric property expressing the possibility of selecting arbitrarily
narrow cones around every direction of a normed space. We show that this
local property provides a common mechanism for finite separation and
approximation. In particular, directionally compact sets can be separated
from closed cones by finitely many members of the family, while locally
compact cones admit finite outer approximations with quantitative
Hausdorff control on their sets of directions and on bounded sections.
These finite constructions lead naturally to max-type positively
homogeneous separators and do not require convexity of the cones to be
separated.

We then study two complementary realizations of the framework. For the
classical family of Bishop--Phelps cones, directional fineness is
characterized by the requirement that every point of the unit sphere be
a denting point of the unit ball, equivalently, by property~$(G)$; in
Banach spaces this amounts to rotundity together with the Kadec property.
On the other hand, we introduce transversal coercivity as a general axial
construction principle. It yields directionally fine families in arbitrary
normed spaces, without geometric assumptions on the norm, and includes
uniform axial deviation and norm-normalized axial cones as canonical
models. Finally, the explicit structure of the
former yields penalization and recovery results for optimization over locally
compact cones, together with quantitative approximation and convergence estimates.
\end{abstract}

\keywords{Directionally fine families, cone separation, cone approximation,
Bishop--Phelps cones, directional compactness, transversal coercivity}
\subjclass[2020]{46B20, 49J53, 90C29, 90C48}

\maketitle

\section{Introduction}

Separation by cones and positively homogeneous functionals is a classical
tool in optimization, vector optimization, and variational analysis. It
plays a central role in scalarization, proper efficiency, and in the study
of geometric relations between feasible, ordering, and tangent cones.
Bishop--Phelps cones constitute one of the standard models in this context
and have been extensively used in nonlinear separation and vector
optimization; see, among others,
\cite{Phelps1974,Jahn2009,Kasimbeyli2010,HaJahn2017}.
More recent separation results continue to exploit the geometry of such
cones and the properties of their norm-bases; see, for instance,
\cite{GarciaCastanoEtAl2025}.

Most classical cone-separation schemes are formulated in terms of a
particular class of separating cones, and frequently seek a single cone
with the required separation properties. The viewpoint adopted here is
different. We start from a local question on the space of directions:
given a unit direction and an arbitrarily small directional scale, does a
prescribed family contain a cone having that direction in its interior and
remaining inside the prescribed scale? This leads to the notion of a
\emph{directionally fine family}. Geometrically, directional fineness is a
local-base property on the unit sphere, or equivalently on the space of
positive rays.

The main purpose of the paper is to show that this local property provides
a general mechanism for passing from local directional constructions to
finite separation and approximation results. The passage from local to
finite is governed by compactness of the relevant set of directions. Thus
radial size plays no essential role: what matters is the geometry of the
normalized image of the sets on the unit sphere.

We first develop the functional framework associated with directionally
fine families. Every closed cone selected around a prescribed direction
admits a continuous positively homogeneous defining function which can be
normalized at that direction and controlled by the norm. Consequently, the
functional structure required for separation does not have to be imposed
as an additional hypothesis. In the convex case, the defining function
may instead be chosen superlinear, preserving the normalization and the
required one-sided norm estimate. This connects the geometric selection
of cones with nonlinear scalarization by positively homogeneous
representing functions; compare, for instance,
\cite{GuntherKhazayelStrugariuTammer2026}.

A first consequence of directional fineness is a local-to-global
separation principle. A point outside a closed cone can be surrounded by
an arbitrarily narrow member of the family which intersects the closed
cone only at the origin. When the normalized directions of a set are
compact, finitely many such local cones suffice. Their defining functions
can then be combined through a maximum, producing a single continuous
positively homogeneous separator. This leads naturally to the notion of
directional compactness. We also examine its relation with weak compactness
and show that, in Kadec spaces, where the weak and norm topologies coincide on the unit sphere, directional weak compactness and
directional compactness coincide.

The same local-to-finite mechanism also yields quantitative outer approximation
of locally compact cones. If $L$ is locally compact, its norm-base is compact and can therefore be
covered by finitely many local models from any directionally fine family.
For every prescribed conic scale $\varepsilon$, this produces a finite
outer approximation
\[
L\subset D_L^\varepsilon\subset L_\varepsilon,
\]
together with quantitative estimates
\[
d_H(D_L^\varepsilon\cap S_X,B_L)\leq2\varepsilon,
\]
and, on bounded sections,
\[
d_H(D_L^\varepsilon\cap RB_X,L\cap RB_X)
\leq2R\varepsilon.
\]
The same construction gives finite separation of disjoint cones and,
when both cones are locally compact, symmetric separation with uniform
homogeneous margins.

The finite character of these results is essential. In general, a
nonconvex cone may have several separated directional branches whose
simultaneous containment in a single convex cone necessarily introduces
directions that one wishes to exclude. Thus passing from one separating
cone to a finite family is not merely a technical consequence of a
compactness argument. Finite unions genuinely enlarge the geometries that
can be treated, and the resulting max-type separator reflects this
disjunctive structure. In this respect, the present approach differs from
separation schemes based on a single Bishop--Phelps cone. The two
viewpoints are complementary: the latter seek a suitable individual
convex separator under appropriate geometric conditions, whereas here
local separating cones are allowed to be assembled into a finite
nonconvex model.

A natural question is then which familiar cone families are directionally
fine. We give a complete answer for the classical Bishop--Phelps family.
We prove that this family is directionally fine if and only if every point
of the unit sphere is a denting point of the unit ball, that is, if and
only if the norm has the classical property~$(G)$ of Fan and Glicksberg
\cite{FanGlicksberg1958}. In Banach spaces, this is equivalent to
rotundity together with the Kadec property
\cite{LinLinTroyanski1986}. In particular, the Bishop--Phelps family is directionally
fine in every locally uniformly rotund space, and hence in every
uniformly convex space. Thus the result applies, in particular, to Hilbert
spaces and to the classical spaces $\ell^p$ and $L^p(\mu)$,
$1<p<\infty$, with their usual norms. In finite-dimensional spaces, the
condition reduces to strict convexity. Once this characterization is established, all the abstract finite approximation and separation results immediately provide corresponding
Bishop--Phelps constructions.

The Bishop--Phelps characterization also shows that directional fineness
of a prescribed classical family may depend substantially on the geometry
of the norm. We therefore consider a complementary problem: whether
directionally fine families can be constructed in arbitrary normed spaces,
without rotundity, dentability, smoothness, or related assumptions on the
norm. To this end, we develop an axial construction principle for arbitrary normed spaces, based on a distinguished direction, a norming functional, and a positively
homogeneous measure of deviation from that direction. The essential
requirement is a coercivity condition controlling transversal departures
from the axis. We refer to this condition as \emph{transversal
coercivity}.

This axial principle is used in both directions. On the one hand, every
closed convex cone sufficiently localized around a prescribed direction
admits an exact representation through a transversally coercive deviation.
On the other hand, every such deviation generates localized cones by two
natural normalization procedures. These constructions yield, in
particular, two canonical directionally fine families in arbitrary normed
spaces: the uniform axial deviation family and the norm-normalized axial
family. The former always yields convex cones, whereas convexity is not guaranteed
for the latter. More general deviations lead to further examples, so
transversal coercivity should be regarded as a general construction
principle rather than as the definition of a single cone model.

Accordingly, the abstract theory developed here may be summarized by the
scheme
\[
\begin{aligned}
\text{local cone construction}
&\Longrightarrow \text{directionally fine family}\\
&\Longrightarrow \text{finite approximation and separation}.
\end{aligned}
\]
When the local cones admit explicit positively homogeneous defining
functions, finite unions are represented by the maximum of these
functions. Thus every concrete directionally fine family yields its own
explicit realization of the same separation mechanism.

As a simple optimization consequence, we finally specialize the finite
outer approximation to uniform axial deviation cones. Their explicit
structure provides an error bound for the finite approximating cone.
Combined with Lipschitz continuity of the objective, this yields an exact
penalization of the extended problem and a recovery procedure producing a
feasible point for the original conic problem. The recovered point is
quantitatively close to the penalized solution, with explicit estimates for
both the optimality gap and the approximation of the optimal value.
Under uniform boundedness of penalized minimizers, these estimates further
yield convergence of optimal values and subsequential convergence to
solutions of the original problem. This application is included mainly to
illustrate how the geometric construction can be combined with standard
optimization tools once an explicit directionally fine family has been
chosen.

The paper is organized as follows. Section~2 introduces defining functions
and directionally fine families, including the superlinear representation
available in the convex case. Section~3 develops the finite separation and
approximation theory, including directional compactness and Hausdorff
approximation of locally compact cones. Section~4 studies the
Bishop--Phelps family and characterizes its directional fineness in terms
of denting points and property~(G). Finally, Section~5 develops the axial
representation and construction principles based on transversal
coercivity, derives the canonical uniform axial deviation and
norm-normalized families, and applies the former to penalization,
approximate recovery, and convergence in conically constrained
optimization.

\section{Defining Functions and Directionally Fine Families}
This section introduces the basic notation and the functional-geometric framework that underlies the separation and approximation results developed later.

Throughout the paper, $X$ is a real normed space, $X^*$ its continuous
dual, and $0_X$ its origin. We write $B_X:=\{x\in X:\|x\|\leq1\}$, $S_X:=\{x\in X:\|x\|=1\}$, $S_{X^*}:=\{f\in X^*:\|f\|=1\}$, and $B(x,r):=x+rB_X$, $x\in X$, $r>0$. We denote by
$\operatorname{int}A$ and $\operatorname{bd}A$ the interior and boundary
of a set $A$, respectively, by $w=\sigma(X,X^*)$ the weak topology on
$X$, and set $\mathbb R_+:=[0,+\infty)$ and $t_+:=\max\{t,0\} \quad \text{for} \quad t\in\R$.

For nonempty sets $A,B\subset X$ and $x\in X$, set $d(x,A):=\inf_{a\in A}\|x-a\|$, $d(A,B):=\inf_{a\in A}d(a,B)$. For nonempty closed bounded sets $A,B\subset X$, their Hausdorff distance
is
\[
d_H(A,B):=
\max\left\{\sup_{a\in A}d(a,B),\sup_{b\in B}d(b,A)\right\}.
\]

A nonempty set $C\subset X$ is called a cone if $tC\subset C$ for every
$t\geq0$. A cone is proper if $\{0_X\}\subsetneq C\subsetneq X$, pointed
if $C\cap(-C)=\{0_X\}$, and solid if $\operatorname{int}C\neq\varnothing$.
Unless otherwise stated, all cones considered throughout the paper are
assumed to be proper. For $A\subset X$, set
$\operatorname{cone}(A):=\{ta:t\geq0,\ a\in A\}$.

A mapping $\varphi:X\to\R$ is positively homogeneous if $\varphi(tx)=t\varphi(x)$, $\forall t\ge0$, $x\in X$. A positively homogeneous mapping is superlinear if $\varphi(x+y)\ge\varphi(x)+\varphi(y)$, $\forall x,y\in X$. A mapping $p:X\to\mathbb R$ is subadditive if $p(x+y)\leq p(x)+p(y)$ for all $x,y\in X$. A mapping $f:X\to\R$ is Lipschitz continuous on $X$ if there exists $L\ge 0$ such that
\(
|f(x)-f(y)|
\le
L\|x-y\|
\quad
\forall x,y\in X.
\) In this case, $f$ is also called $L$-Lipschitz.

\begin{definition}
\label{def:defining-function}
Let $C\subset X$ be a cone. A mapping $\varphi_C:X\to\R$ is called a
\emph{defining function} for $C$ if
$C=\{x\in X:\varphi_C(x)\geq0\}$.
\end{definition}

\begin{remark}
\label{rem:closedness_essential}
From now on, cones will be assumed to be closed whenever a continuous
defining function is involved. Indeed, if $\varphi:X\to\R$ is continuous,
then $\{\varphi\geq0\}$ is closed. Thus closedness is the natural
assumption for the functional representation considered below.
\end{remark}

The next result is the key simplification of the abstract framework.

\begin{lemma}
\label{lem:adapted_defining_function}
Let $C\subset X$ be a closed cone and let $u\in S_X\cap\intt C$. Then
there exists a continuous positively homogeneous mapping
$\varphi_{C,u}:X\to\R$ such that
\[
C=\{\varphi_{C,u}\geq0\},\qquad
\intt C=\{\varphi_{C,u}>0\},\qquad
\bd C=\{\varphi_{C,u}=0\},
\]
with $\varphi_{C,u}(u)=1$, and $|\varphi_{C,u}(x)|\leq\|x\|$ for every $x\in X$.
\end{lemma}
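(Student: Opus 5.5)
We take $\varphi_{C,u}$ to be a suitably clipped and rescaled version of the signed distance to $\bd C$. Set
\[
\delta(x):=\begin{cases} d(x,X\setminus C), & x\in C,\\ -d(x,C), & x\notin C.\end{cases}
\]
We first check the basic properties of $\delta$.

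\emph{Homogeneity.} Since $C$ is a cone, so is $X\setminus C\cup\{0_X\}$, in the sense that $X\setminus C$ is invariant under multiplication by every $t>0$. Because the norm is homogeneous, $d(tx,C)=t\,d(x,C)$ and $d(tx,X\setminus C)=t\,d(x,X\setminus C)$ for $t>0$. Hence $\delta(tx)=t\delta(x)$ for $t>0$. Also $\delta(0_X)=0$: since $C$ is proper, $0_X\notin\intt C$ (otherwise $C$ would absorb a ball and equal $X$), so $d(0_X,X\setminus C)=0$.

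\emph{Continuity.} The signed distance to a closed set is $1$-Lipschitz, so $\delta$ is continuous. Concretely, on $C$ and on $X\setminus C$ it is a distance function, and it vanishes on $\bd C$ from both sides.

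\emph{Sign pattern.} For $x\in C$ we have $\delta(x)\ge0$, and $\delta(x)>0$ exactly when $x\in\intt C$. For $x\notin C$, closedness of $C$ gives $\delta(x)<0$. Therefore
\[
\{\delta\ge0\}=C,\qquad \{\delta>0\}=\intt C,\qquad \{\delta=0\}=\bd C.
\]

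\emph{Bound.} Since $0_X\in C$ and $0_X\in\overline{X\setminus C}$, we get $|\delta(x)|\le\|x-0_X\|=\|x\|$.

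The only real difficulty is that the normalization $\varphi(u)=1$ conflicts with the bound $|\varphi(x)|\le\|x\|$. Indeed, $r:=\delta(u)=d(u,X\setminus C)$ lies in $(0,1]$, because $u\in\intt C$ and $\|u\|=1$. Dividing by $r$ gives the normalization but may destroy the norm estimate. The plan is to rescale first and then clip by the norm:
\[
\varphi_{C,u}(x):=\max\Bigl\{\min\Bigl\{\tfrac{\delta(x)}{r},\|x\|\Bigr\},-\|x\|\Bigr\}.
\]
Maxima and minima of continuous positively homogeneous functions are again continuous and positively homogeneous, and $|\varphi_{C,u}(x)|\le\|x\|$ holds by construction. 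At the axis, $\varphi_{C,u}(u)=\max\{\min\{1,1\},-1\}=1$.

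It remains to check that clipping preserves the sign of $\delta$. For $x\ne0_X$ we have $\|x\|>0$. If $\delta(x)/r>0$ (respectively $=0$, $<0$), then the clipped value lies in $(0,\|x\|]$ (respectively equals $0$, lies in $[-\|x\|,0)$). At $x=0_X$ every quantity is $0$, which is consistent with $0_X\in\bd C$. Consequently $\{\varphi_{C,u}\ge0\}=C$, $\{\varphi_{C,u}>0\}=\intt C$ and $\{\varphi_{C,u}=0\}=\bd C$, which completes the argument.
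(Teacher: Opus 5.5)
Your proof is correct and is essentially the paper's. Your piecewise $\delta$ coincides with $\psi_C=d(\cdot,X\setminus C)-d(\cdot,C)$, the oriented distance up to sign, and the paper likewise normalizes it by its value at $u$ and then truncates between $-\|x\|$ and $\|x\|$; writing $\delta$ in this global form also gives continuity at once, as a difference of Lipschitz functions, without invoking the $1$-Lipschitz property of the signed distance.
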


\begin{proof}
Set $D:=X\setminus C$ and
$\psi_C(x):=d(x,D)-d(x,C)$. Since $C$ is a proper cone,
$D\neq\varnothing$ and $tC=C$, $tD=D$ for every $t>0$. Hence
$d(tx,C)=t\,d(x,C)$ and $d(tx,D)=t\,d(x,D)$, so $\psi_C$ is positively
homogeneous; it is continuous by the Lipschitz continuity of distance
functions, and $\psi_C(0_X)=0$ since $0_X\in\overline D\cap C$.
Moreover, $\psi_C>0$ on $\intt C$, $\psi_C=0$ on $\bd C$, and
$\psi_C<0$ on $X\setminus C$.

Since $u\in\intt C$, put $a:=\psi_C(u)>0$ and
$\widehat\psi_C:=\psi_C/a$, and define
\[
\varphi_{C,u}(x):=
\max\{-\|x\|,\min\{\|x\|,\widehat\psi_C(x)\}\}.
\]
Then $\varphi_{C,u}$ is continuous and positively homogeneous,
$|\varphi_{C,u}(x)|\leq\|x\|$, and the truncation preserves the positive,
zero, and nonnegative level sets of $\psi_C$. Finally,
$\widehat\psi_C(u)=\|u\|=1$, so $\varphi_{C,u}(u)=1$.
\end{proof}

\begin{remark}
\label{rem:oriented-distance}
The function
\[
\psi_C(x)=d(x,X\setminus C)-d(x,C)
\]
is, up to sign, the classical oriented distance function; see, e.g.,
\cite{DelfourZolesio1994,DelfourZolesio2004}. In the conic setting,
Seeger~\cite{Seeger2025} uses oriented distances to study erosions and
dilations of proper convex cones in finite-dimensional Euclidean spaces.
Here the role is different: for an arbitrary closed cone in a normed
space, without convexity assumptions, $\psi_C$ provides a continuous
positively homogeneous defining function whose normalization and
truncation yield the adapted defining function used above.
\end{remark}

\begin{definition}
\label{def:adapted_defining_function}
Let $C$ be a closed cone and $u\in S_X\cap\intt C$. A continuous
positively homogeneous defining function $\varphi_{C,u}$ satisfying
$\varphi_{C,u}(u)=1$ and
$\varphi_{C,u}(x)\leq\|x\|$ for every $x\in X$ is called an
\emph{adapted defining function for $C$ at the direction $u$}.
\end{definition}

\begin{definition}
\label{def:directionally_fine}
Let $\cF$ be a family of closed cones in $X$. We say that $\cF$ is
\emph{directionally fine} if, for every $u\in S_X$ and
$\varepsilon\in(0,1)$, there exists $C\in\cF$ such that
\[
u\in\intt C\subset C\subset\cone B(u,\varepsilon).
\]
If $C$ can always be chosen convex, we say that $\cF$ is
\emph{convex directionally fine}.
\end{definition}

\begin{remark}
The formulation on $S_X$ is equivalent to the corresponding directional formulation around an arbitrary nonzero point. Indeed, if $x\ne0_X$ and $u=x/\|x\|$, then
\[
\cone B(x,\delta)
=
\cone B\!\left(u,\frac{\delta}{\|x\|}\right).
\]
The restriction $\varepsilon<1$ is useful because $0_X\notin B(u,\varepsilon)$ and $\cone B(u,\varepsilon)$ is a proper cone.
\end{remark}

By Lemma~\ref{lem:adapted_defining_function}, every cone selected by
directional fineness automatically admits an adapted defining function
at the prescribed direction, with the normalization and norm control
required below.

\begin{remark}
\label{rem:defining-vs-representing}
The defining functions used here are closely related, up to a sign
convention, to the representing functionals of
\cite[Definition~4]{GuntherKhazayelStrugariuTammer2026}.
Their role is different, however: rather than being introduced primarily
for scalarization, our adapted defining functions are attached to
locally selected separating cones and are normalized and norm-controlled
so as to encode their directional localization.
\end{remark}

We next isolate the convex case. Here the defining function can be chosen superlinear without losing the normalization or the sharp norm control.

\begin{proposition}
\label{prop:superlinear_adapted}
Let $C\subset X$ be a closed convex cone and let
$u\in S_X\cap\intt C$. Define
\[
C^+:=\{x^*\in X^*:x^*(c)\geq0\ \forall c\in C\},\qquad
B_u:=\{x^*\in C^+:x^*(u)=1\},
\]
and
\[
\varphi^{\rm sl}_{C,u}(x):=\inf_{x^*\in B_u}x^*(x).
\]
Then $B_u$ is nonempty and norm bounded, and
$\varphi^{\rm sl}_{C,u}$ is finite-valued, continuous, positively
homogeneous, and superlinear. Moreover,
\[
C=\{\varphi^{\rm sl}_{C,u}\geq0\},\qquad
\intt C=\{\varphi^{\rm sl}_{C,u}>0\},\qquad
\bd C=\{\varphi^{\rm sl}_{C,u}=0\},
\]
and $\varphi^{\rm sl}_{C,u}(u)=1$. If $C\subset\cone B(u,\varepsilon)$ for some $\varepsilon\in(0,1)$,
then $\varphi^{\rm sl}_{C,u}(x)\leq\|x\|$ for every $x\in X$; hence
$\varphi^{\rm sl}_{C,u}$ is adapted.
\end{proposition}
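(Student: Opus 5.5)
The plan is to use Hahn--Banach separation together with the standard bipolar description of closed convex cones.

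\emph{Step 1: $B_u$ is nonempty and bounded, and $\varphi^{\rm sl}_{C,u}$ is finite.} Since $u\in\intt C$, choose $r>0$ with $B(u,r)\subset C$. For $x^*\in C^+$ and $\|h\|\le1$ we have $x^*(u-rh)\ge0$, hence $r\,x^*(h)\le x^*(u)$. This gives $\|x^*\|\le x^*(u)/r$, so $\|x^*\|\le1/r$ for every $x^*\in B_u$.

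For nonemptiness, note that $C$ is proper, so $C^+\neq\{0\}$: pick $z\notin C$ and separate $z$ from the closed convex cone $C$ to obtain $x^*\ne0$ with $x^*\ge0$ on $C$. Then $x^*(u)>0$, since otherwise $x^*(u)=0$ and $x^*\ge0$ on the ball $B(u,r)$ would force $x^*=0$. Normalizing by $x^*(u)$ gives an element of $B_u$.

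Consequently $\varphi^{\rm sl}_{C,u}$ is an infimum of a nonempty family of linear functionals bounded by $1/r$ in norm. It is therefore finite, $(1/r)$-Lipschitz (hence continuous), positively homogeneous and superlinear, since an infimum of linear maps is superadditive. Also $\varphi^{\rm sl}_{C,u}(u)=1$ trivially.

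\emph{Step 2: the level-set identities.} Clearly $C\subset\{\varphi^{\rm sl}_{C,u}\ge0\}$. Conversely, if $x\notin C$, Hahn--Banach gives $x^*\in C^+$ with $x^*(x)<0$. As in Step 1, $x^*(u)>0$, and $x^*/x^*(u)\in B_u$ yields $\varphi^{\rm sl}_{C,u}(x)<0$.

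For the interior: if $x\in\intt C$ with $B(x,s)\subset C$, the bound from Step 1 applied at $x$ gives $x^*(x)\ge s\|x^*\|$. Moreover, $\|x^*\|\ge x^*(u)=1$ for $x^*\in B_u$, so $\varphi^{\rm sl}_{C,u}(x)\ge s>0$.

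If instead $x\in\bd C$, then $x\in C$ (as $C$ is closed) but $x\notin\intt C$. Separating the point $x$ from the open convex set $\intt C$ yields $x^*\ne0$ with $x^*(x)\le x^*(y)$ for all $y\in\intt C$. Since $\intt C$ is a cone, $x^*\ge0$ on $\intt C$, hence on $C=\overline{\intt C}$; so $x^*\in C^+$. Using $0\in\overline{\intt C}$ we get $x^*(x)\le0$, and therefore $x^*(x)=0$. Normalizing gives $\varphi^{\rm sl}_{C,u}(x)\le0$, hence $=0$. Combining the three cases, the sets $\{\varphi^{\rm sl}_{C,u}>0\}$ and $\{\varphi^{\rm sl}_{C,u}=0\}$ are exactly $\intt C$ and $\bd C$.

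\emph{Step 3: the norm estimate, which is the main obstacle.} Assume $C\subset\cone B(u,\varepsilon)$. Then for any $x^*\in X^*$ we have
\[
\varphi^{\rm sl}_{C,u}(x)\le\inf_{x^*\in B_u}x^*(x)\le\|x\|\inf_{x^*\in B_u}\|x^*\|,
\]
so it suffices to find some $x^*\in B_u$ with $\|x^*\|\le1$.

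Take a norming functional $f\in S_{X^*}$ with $f(u)=1$. For $c=t(u+h)\in C$ with $\|h\|\le\varepsilon<1$ we get $f(c)=t(1+f(h))\ge t(1-\varepsilon)\ge0$. Hence $f\in C^+$ and $f\in B_u$, which gives
\[
\varphi^{\rm sl}_{C,u}(x)\le f(x)\le\|x\|.
\]
Continuity and positive homogeneity were established in Step 1, so $\varphi^{\rm sl}_{C,u}$ is adapted.

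The delicate points are the boundary identity in Step 2 and choosing the right functional in Step 3. The boundary case needs $\intt C$ nonempty, so that $C=\overline{\intt C}$ and the cone property can be used, as done above. In Step 3 the key observation is that the localization hypothesis makes every norming functional of $u$ nonnegative on $C$.
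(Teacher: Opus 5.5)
Your proof is correct and follows the paper's argument essentially step for step: separation gives $B_u\neq\varnothing$, the ball $B(u,r)\subset C$ gives $\|x^*\|\le 1/r$ and positivity of the interior, and the norming functional $f$ lies in $B_u$ and yields $\varphi^{\rm sl}_{C,u}\le\|\cdot\|$. The only divergence is in showing $\bd C\subset\{\varphi^{\rm sl}_{C,u}=0\}$: you use a supporting-functional separation at boundary points, while the paper gets $\{\varphi^{\rm sl}_{C,u}>0\}\subset\intt C$ directly from continuity (that set is open and contained in $C$), so your route is valid but longer, though it does additionally show that the infimum is attained at boundary points.
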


\begin{proof}
Choose $\rho>0$ with $u+\rho B_X\subset C$. Since $C$ is a proper
closed convex cone, separation yields a nonzero $x^*\in C^+$. If
$x^*(u)=0$, then $0\leq x^*(u\pm\rho h)=\pm\rho x^*(h)$ for every
$h\in B_X$, forcing $x^*=0$. Thus $x^*(u)>0$, and normalization gives
$B_u\neq\varnothing$. Moreover, for $x^*\in B_u$ and $h\in B_X$,
$0\leq1\pm\rho x^*(h)$, so
$\|x^*\|\leq1/\rho$. Hence
$M:=\sup_{x^*\in B_u}\|x^*\|<+\infty$ and
$|\varphi^{\rm sl}_{C,u}(x)-\varphi^{\rm sl}_{C,u}(y)|
\leq M\|x-y\|$; thus $\varphi^{\rm sl}_{C,u}$ is finite-valued and
continuous. Positive homogeneity is immediate, and
$\varphi^{\rm sl}_{C,u}(x+y)\geq
\varphi^{\rm sl}_{C,u}(x)+\varphi^{\rm sl}_{C,u}(y)$, so it is
superlinear. Also $\varphi^{\rm sl}_{C,u}(u)=1$.

Clearly $\varphi^{\rm sl}_{C,u}\geq0$ on $C$. If $x\notin C$,
separation gives $y^*\in C^+$ with $y^*(x)<0$; as above $y^*(u)>0$,
and therefore
$\varphi^{\rm sl}_{C,u}(x)\leq y^*(x)/y^*(u)<0$. Hence
$C=\{\varphi^{\rm sl}_{C,u}\geq0\}$.

If $x\in\intt C$, choose $\delta>0$ with $x+\delta B_X\subset C$.
For $x^*\in B_u$ and $h\in B_X$,
$0\leq x^*(x-\delta h)$, whence
$x^*(x)\geq\delta\|x^*\|\geq\delta$. Thus
$\varphi^{\rm sl}_{C,u}(x)>0$. The converse follows from continuity, so
$\intt C=\{\varphi^{\rm sl}_{C,u}>0\}$ and, since $C$ is closed,
$\bd C=\{\varphi^{\rm sl}_{C,u}=0\}$.

Finally, assume $C\subset\cone B(u,\varepsilon)$ and choose
$f\in S_{X^*}$ with $f(u)=1$. If $0_X\neq c=ty\in C$, with $t>0$ and
$y\in B(u,\varepsilon)$, then
$f(y)\geq1-\|y-u\|\geq1-\varepsilon>0$. Hence $f\in B_u$, and therefore
$\varphi^{\rm sl}_{C,u}(x)\leq f(x)\leq\|x\|$ for every $x\in X$.
\end{proof}

\begin{remark}
\label{rem:one-sided-norm-control}
The estimate in Proposition~\ref{prop:superlinear_adapted} is
one-sided. Indeed, norm boundedness of $B_u$ only gives
\[
-M\|x\|\leq\varphi^{\rm sl}_{C,u}(x)\leq\|x\|,
\qquad
M:=\sup_{x^*\in B_u}\|x^*\|<+\infty,
\]
and in general $M$ need not equal $1$. Hence the stronger estimate
$|\varphi^{\rm sl}_{C,u}|\leq\|\cdot\|$ does not follow, nor is it
needed for adaptedness or for the separation results below.
\end{remark}

\begin{remark}\label{rem:oriented-distance-dual}
For convex cones, Proposition~\ref{prop:superlinear_adapted}
is also related to the oriented-distance representation discussed in
Remark~\ref{rem:oriented-distance}. In the finite-dimensional Euclidean setting, Seeger~\cite{Seeger2025}
represents the oriented distance to a proper convex cone by an extremum
over norm-normalized elements of its polar cone. With the sign convention
of Remark~\ref{rem:oriented-distance}, this yields an infimum representation
over norm-normalized elements of the positive dual cone. Thus both
constructions have a similar dual structure, but with different
normalizations: the oriented-distance representation uses norm-normalized
dual elements, whereas
\[
\varphi^{\rm sl}_{C,u}(x)
=
\inf\{x^*(x):x^*\in C^+,\ x^*(u)=1\},
\]
uses the distinguished direction $u$. This yields
$\varphi^{\rm sl}_{C,u}(u)=1$ and extends naturally to arbitrary normed
spaces.
\end{remark}

\begin{corollary}
\label{cor:convex_directionally_fine}
If $\cF$ is convex directionally fine, then for every $u\in S_X$ and every $\varepsilon\in(0,1)$ one can choose $C\in\cF$ and a continuous superlinear adapted defining function $\varphi_{C,u}$ such that $u\in\intt C
\subset C
\subset\cone B(u,\varepsilon)$,
\[
C=\{\varphi_{C,u}\ge0\},
\qquad
\varphi_{C,u}(u)=1,
\qquad
\varphi_{C,u}(x)\le\|x\|,
\quad\forall x\in X.
\]
\end{corollary}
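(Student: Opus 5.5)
The plan is to combine the definition of convex directional fineness with Proposition~\ref{prop:superlinear_adapted}. Fix $u\in S_X$ and $\varepsilon\in(0,1)$. Since $\cF$ is convex directionally fine, Definition~\ref{def:directionally_fine} yields a closed convex cone $C\in\cF$ with
\[
u\in\intt C\subset C\subset\cone B(u,\varepsilon).
\]
Following the convention of the paper, I take $C$ to be proper. This is automatic in any case. First, $u\in C$, so $C\neq\{0_X\}$. Second, $C\subset\cone B(u,\varepsilon)$, and the latter cone is proper because $\varepsilon<1$ (as noted in the remark following the definition). Hence $C\neq X$.

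Next I apply Proposition~\ref{prop:superlinear_adapted} to $C$ and $u$, which is allowed since $u\in S_X\cap\intt C$. I set $\varphi_{C,u}:=\varphi^{\rm sl}_{C,u}$. The proposition gives directly that $\varphi_{C,u}$ is finite-valued, continuous, positively homogeneous and superlinear. It also gives $C=\{\varphi_{C,u}\ge0\}$ and $\varphi_{C,u}(u)=1$.

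Because $C\subset\cone B(u,\varepsilon)$ with $\varepsilon\in(0,1)$, the final assertion of the proposition also applies. It yields $\varphi_{C,u}(x)\le\|x\|$ for all $x\in X$, so $\varphi_{C,u}$ is adapted in the sense of Definition~\ref{def:adapted_defining_function}. This completes the proof.

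There is no genuine obstacle here; the corollary is a direct combination of the two earlier results. The only point to watch is that the one-sided bound $\varphi\le\|\cdot\|$ requires the localization hypothesis $C\subset\cone B(u,\varepsilon)$. This hypothesis is supplied exactly by directional fineness, which is why the corollary is stated for the selected cone and not for an arbitrary convex cone containing $u$ in its interior.
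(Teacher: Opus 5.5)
Your proposal is correct and follows the paper's own argument exactly: select a convex $C\in\cF$ via Definition~\ref{def:directionally_fine}, then apply Proposition~\ref{prop:superlinear_adapted}, whose final clause turns the localization $C\subset\cone B(u,\varepsilon)$ into the bound $\varphi_{C,u}\le\|\cdot\|$. Your explicit check that $C$ is proper is correct; the paper leaves it implicit.
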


\begin{proof}
Choose a convex cone $C$ as in Definition~\ref{def:directionally_fine} and apply Proposition~\ref{prop:superlinear_adapted}.
\end{proof}

\section{Separation and Approximation by Directionally Fine Families}\label{sec:separation-approximation}
The results of this section develop the two main consequences of
directional fineness. We first pass from local directional separation to
finite separation of sets, and then use compactness of the set of
directions to obtain finite approximation and separation results for
cones.
\subsection{Separation from Closed Cones}~\label{subsec:finite-separation}
This subsection develops a local-to-global separation principle.
Directional fineness first provides separation around each individual
direction; compactness of the normalized directions then reduces the
construction to a finite family, whose defining functions can be combined
into a single max-type separator. The required continuous positively
homogeneous adapted defining functions are supplied by
Lemma~\ref{lem:adapted_defining_function}.

\begin{proposition}
\label{prop:closed_cone_point}
Let $\cF$ be directionally fine. Let $K\subset X$ be a closed cone and
let \(
x_0\in X\setminus K.
\)
Then, for every
\(
0<\varepsilon<d(x_0,K),
\)
there exist a cone $C\in\cF$ and an adapted defining function $\varphi$
for $C$ such that
\[
x_0\in\intt C
\subset
C
\subset
\cone B(x_0,\varepsilon),
\qquad
K\cap C=\{0_X\},
\]
and
\(
\varphi(x_0)=\|x_0\|.
\)
Moreover, for every
\(
0<\rho<\|x_0\|,
\)
the set
\(
U_\rho
:=
\{x\in X:\varphi(x)>\rho\}
\)
is an open neighborhood of $x_0$ satisfying
\(
U_\rho\subset\intt C
\)
and
\[
\varphi(k)\le0<\rho<\varphi(x),
\qquad
\forall k\in K,\quad
\forall x\in U_\rho.
\]
\end{proposition}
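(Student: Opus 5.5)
The plan is to reduce everything to the normalized direction $u:=x_0/\|x_0\|$ and then apply directional fineness followed by Lemma~\ref{lem:adapted_defining_function}. First I check that the scale is admissible. Since $K$ is a cone, $0_X\in K$, so $d(x_0,K)\le\|x_0\|$ and hence $\varepsilon<\|x_0\|$. Put $\varepsilon':=\varepsilon/\|x_0\|\in(0,1)$. By the remark following Definition~\ref{def:directionally_fine},
\[
\cone B(x_0,\varepsilon)=\cone B(u,\varepsilon').
\]
Directional fineness then yields $C\in\cF$ with $u\in\intt C\subset C\subset\cone B(u,\varepsilon')$. Since $C$ is a cone, $x_0=\|x_0\|u\in\intt C$, which gives the first chain of inclusions.

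Next I show $K\cap C=\{0_X\}$. Suppose $0_X\neq k\in K\cap C$. Then $k=ty$ with $t>0$ and $y\in B(x_0,\varepsilon)$. Because $K$ is a cone, $y=k/t\in K$, so $d(x_0,K)\le\|x_0-y\|\le\varepsilon<d(x_0,K)$, a contradiction. This is the only place where the hypothesis $\varepsilon<d(x_0,K)$ is used.

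For the functional part I take $\varphi:=\varphi_{C,u}$, the specific function constructed in Lemma~\ref{lem:adapted_defining_function}, rather than an arbitrary adapted function. This matters because that lemma also gives $\intt C=\{\varphi>0\}$, and we need this property. It is continuous and positively homogeneous, with $\varphi(u)=1$ and $|\varphi|\le\|\cdot\|$, so it is adapted. By homogeneity, $\varphi(x_0)=\|x_0\|\varphi(u)=\|x_0\|$.

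For $0<\rho<\|x_0\|$, the set $U_\rho=\{\varphi>\rho\}$ is open by continuity. It contains $x_0$ because $\varphi(x_0)=\|x_0\|>\rho$. It is contained in $\{\varphi>0\}=\intt C$. Finally, take $k\in K$. If $k=0_X$, then $\varphi(k)=0$. If $k\neq0_X$, then $k\notin C=\{\varphi\ge0\}$ by the previous step, so $\varphi(k)<0$. Hence $\varphi(k)\le0<\rho<\varphi(x)$ for every $x\in U_\rho$.

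I do not expect a real obstacle. The only delicate points are verifying $\varepsilon'<1$ from $0_X\in K$, and using the interior characterization from the lemma, which is not part of Definition~\ref{def:adapted_defining_function}.
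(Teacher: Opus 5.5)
Your proposal is correct and follows essentially the same route as the paper. Both arguments normalize to $u=x_0/\|x_0\|$, use $0_X\in K$ to get a scale below $1$, apply directional fineness, exclude $K$ via $B(x_0,\varepsilon)\cap K=\varnothing$ and the cone property, and take $\varphi$ from Lemma~\ref{lem:adapted_defining_function}. Your explicit remark that $\intt C=\{\varphi>0\}$ comes from the lemma, not from the definition of adaptedness, makes precise a point the paper uses implicitly.
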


\begin{proof}
Set $u_0:=x_0/\|x_0\|$ and
$\eta:=\varepsilon/\|x_0\|$. Since $K$ is a cone,
$d(x_0,K)=\|x_0\|d(u_0,K)$. Moreover, since $0_X\in K$,
$d(x_0,K)\leq\|x_0\|$. Hence
$0<\eta<\min\{1,d(u_0,K)\}$. By directional fineness, there exists $C\in\cF$ such that $u_0\in\intt C\subset C\subset\cone B(u_0,\eta)$. Since $B(x_0,\varepsilon)=\|x_0\|B(u_0,\eta)$ and positive scaling
does not change the generated cone,
$\cone B(u_0,\eta)=\cone B(x_0,\varepsilon)$. Thus
$x_0\in\intt C\subset C\subset\cone B(x_0,\varepsilon)$.

By Lemma~\ref{lem:adapted_defining_function}, let $\varphi$ be an
adapted defining function for $C$ at $u_0$. Positive homogeneity and
normalization yield $\varphi(x_0)=\|x_0\|$. Since $\varepsilon<d(x_0,K)$, one has
$B(x_0,\varepsilon)\cap K=\varnothing$, and hence
$K\cap\cone B(x_0,\varepsilon)=\{0_X\}$; otherwise,
$0_X\neq k=ty\in K$, with $t>0$ and
$y\in B(x_0,\varepsilon)$, would imply $y=t^{-1}k\in K$.
Therefore $K\cap C=\{0_X\}$. Consequently,
$\varphi(k)<0$ for $k\in K\setminus\{0_X\}$, while
$\varphi(0_X)=0$, so $\varphi\leq0$ on $K$.

Finally, if $0<\rho<\|x_0\|$, then
$U_\rho:=\{\varphi>\rho\}$ is, by continuity, an open neighborhood
of $x_0$. Since $\rho>0$,
$U_\rho\subset\{\varphi>0\}=\intt C$, and therefore $\varphi(k)\leq0<\rho<\varphi(x)$,
$\forall k\in K,\ \forall x\in U_\rho$.
\end{proof}
\begin{remark}\label{rem:applications_closed_cone_point}\label{rem:nonconvex-cones}
Proposition~\ref{prop:closed_cone_point} provides nonlinear certificates of
exclusion for directions lying outside a closed, possibly nonconvex, cone,
without passing to its closed convex hull. This feature may be useful in
variational analysis when the relevant tangent or graphical cone is
nonconvex. In particular, it is compatible with Bouligand-type first-order
analysis and with graphical differentiation of set-valued mappings; see,
e.g., \cite{RockafellarWets1998,Sama2010}. Related nonconvex conic
structures arise naturally in disjunctive and complementarity systems; see,
e.g., \cite{OutrataKocvaraZowe1998}. In such settings, convexification may
introduce directions that are absent from the original geometry, whereas
Proposition~\ref{prop:closed_cone_point} works directly with the underlying
nonconvex cone.
\end{remark}

To pass from the preceding pointwise construction to a finite one, only
compactness of the normalized directions is needed. This motivates the
following notion.

\begin{definition}
\label{def:directionally_compact}
For $A\subset X\setminus\{0_X\}$, set $\Pi_{S_X}(A)
:=
\left\{\frac{a}{\|a\|}:a\in A\right\}$. We say that $A$ is \emph{directionally compact} (resp. \emph{directionally weakly compact}) if $\Pi_{S_X}(A)$ is compact (resp. weakly compact).
\end{definition}

The terminology ``directionally compact'' has previously appeared in
different, mainly local and variational, senses; see, for instance,
\cite{Penot1983,Sama2010}. Here, the condition imposes
no radial compactness on $A$; in particular, directionally compact sets may
be unbounded. The following example illustrates that directional compactness is compatible
with unboundedness and genuinely infinite-dimensional directional behavior.

\begin{example}
Let $X=\ell_2$, let $(e_n)$ be its canonical basis, and set
$u_n=(e_1+n^{-1}e_n)/\sqrt{1+n^{-2}}$ for $n\geq2$. Since
$u_n\to e_1$ in norm, the set
$D:=\{e_1\}\cup\{u_n:n\geq2\}\subset S_{\ell_2}$ is compact. Hence
$A:=\{tu:t>0,\ u\in D\}$ is directionally compact. Nevertheless, $A$
is unbounded and its directions span an infinite-dimensional subspace
of $\ell_2$.
\end{example}
We can now combine the pointwise separation furnished by
Proposition~\ref{prop:closed_cone_point} with compactness of the set of
directions.
\begin{theorem}
\label{thm:finite-separation-directionally-compact}
Let $\cF$ be directionally fine, let $K\subset X$ be a closed cone,
and let $A\subset X\setminus\{0_X\}$ be directionally compact with
$A\cap K=\varnothing$. Then there exist $C_i\in\mathcal F$ and adapted defining functions $\varphi_i$, $i=1,\ldots,n$, such that
$\varphi_i\leq0$ on $K$ and $K\cap C_i=\{0_X\}$ for every $i$.
Writing
$V_i:=\{x\in X:\varphi_i(x)>\frac12\|x\|\}$, one has
\[
\Pi_{S_X}(A)\subset\bigcup_{i=1}^n\{\varphi_i>1/2\},
\qquad
A\subset\bigcup_{i=1}^n V_i
\subset\bigcup_{i=1}^n\intt C_i.
\]
\end{theorem}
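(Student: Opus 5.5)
Work on the compact set $D:=\Pi_{S_X}(A)\subset S_X$ and reduce everything to Proposition~\ref{prop:closed_cone_point} applied to unit directions. Since $K$ is a cone and $A\cap K=\varnothing$, we also get $D\cap K=\varnothing$: if $a/\|a\|\in K$, then $a=\|a\|\,(a/\|a\|)\in K$. As $K$ is closed, $d(u,K)>0$ for every $u\in D$.

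For each $u\in D$, fix $0<\varepsilon_u<\min\{1,d(u,K)\}$. Proposition~\ref{prop:closed_cone_point} with $x_0=u$ then gives $C_u\in\cF$ and an adapted defining function $\varphi_u$ such that
\[
u\in\intt C_u,\qquad K\cap C_u=\{0_X\},\qquad \varphi_u\le0 \text{ on } K,\qquad \varphi_u(u)=\|u\|=1.
\]
Taking $\rho=1/2<1=\|u\|$, the set $U_u:=\{\varphi_u>1/2\}$ is an open neighbourhood of $u$ contained in $\intt C_u$. The family $\{U_u\}_{u\in D}$ is an open cover of the compact set $D$, so finitely many sets $U_{u_1},\dots,U_{u_n}$ cover $D$. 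Put $C_i:=C_{u_i}$ and $\varphi_i:=\varphi_{u_i}$. This already gives the first inclusion $\Pi_{S_X}(A)\subset\bigcup_i\{\varphi_i>1/2\}$, together with $\varphi_i\le0$ on $K$ and $K\cap C_i=\{0_X\}$.

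For the second chain of inclusions, use positive homogeneity. Given $a\in A$, set $u=a/\|a\|\in D$ and pick $i$ with $\varphi_i(u)>1/2$. Multiplying by $\|a\|>0$ gives $\varphi_i(a)>\frac12\|a\|$, so $a\in V_i$. For $x\in V_i$ we have $x\neq0_X$: if $x=0_X$, then $\varphi_i(0_X)=0$, which is not greater than $0$. Hence $\varphi_i(x)>\frac12\|x\|>0$. By the level-set description in Lemma~\ref{lem:adapted_defining_function}, namely $\intt C_i=\{\varphi_i>0\}$, this yields $V_i\subset\intt C_i$.

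The only point that needs care is the interplay between the adapted functions and Lemma~\ref{lem:adapted_defining_function}. The proposition uses the functions supplied by that lemma, so the strict-positivity characterization of $\intt C_i$ is available. Beyond that, the argument is a standard compactness step, and I do not expect a real obstacle.
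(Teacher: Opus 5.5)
Your proof is correct and follows the paper's argument essentially step for step: you reduce to $D=\Pi_{S_X}(A)$, apply Proposition~\ref{prop:closed_cone_point} at each $u\in D$ with $\rho=1/2$, extract a finite subcover, and use positive homogeneity. The only difference is that you choose $\varepsilon_u<d(u,K)$ pointwise, whereas the paper uses the uniform value $\eta=\frac12 d(D,K)$, which this statement does not need. The point you flagged needs no special care: $\{\varphi_i>0\}\subset\intt C_i$ holds for any continuous defining function, since an open subset of $C_i$ lies in its interior.
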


\begin{proof}
Set $D:=\Pi_{S_X}(A)$. Since $K$ is a cone and $A\cap K=\varnothing$,
one has $D\cap K=\varnothing$.
Since $D$ is compact and $K$ is closed, it follows that $d(D,K)>0$. Fix $\eta:=\frac12 d(D,K)>0$. Then $0<\eta<d(u,K)$, $\forall u\in D$. For each $u\in D$, apply
Proposition~\ref{prop:closed_cone_point} with $\varepsilon=\eta$ and
$\rho=1/2$. This gives
$C_u\in\cF$ and an adapted defining function $\varphi_u$ such that
$\varphi_u\leq0$ on $K$, $K\cap C_u=\{0_X\}$, and
$\{\varphi_u>1/2\}$ is a neighborhood of $u$ contained in $\intt C_u$.
Compactness of $D$ yields a finite subcover, which gives the first
inclusion after relabeling.

If $a\in A$, choose $i$ such that
$\varphi_i(a/\|a\|)>1/2$. Positive homogeneity gives
$\varphi_i(a)>\frac12\|a\|$, so $a\in V_i$. Finally,
$V_i\subset\{\varphi_i>0\}\subset\intt C_i$.
\end{proof}

The finite separating family obtained above can be encoded by a single
max-type positively homogeneous function.
\begin{corollary}
\label{cor:single-separator-directionally-compact}
Under the hypotheses of
Theorem~\ref{thm:finite-separation-directionally-compact}, let   $C_i\in\mathcal F$ and $\varphi_i$, $i=1,\ldots,n$,  be as there and set
$\Phi:=2\max_{1\leq i\leq n}\varphi_i$. Then $\Phi:X\to\R$ is
continuous and positively homogeneous, and
\[
\{\Phi\geq0\}=\bigcup_{i=1}^n C_i,\qquad
\{\Phi\geq0\}\cap K=\{0_X\},\qquad
\Phi(k)<0<\|a\|<\Phi(a),
\]
for every $k\in K\setminus\{0_X\}$ and $a\in A$.
\end{corollary}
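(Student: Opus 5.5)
The plan is to verify each claim directly from the properties recorded in Theorem~\ref{thm:finite-separation-directionally-compact} and from the adaptedness of the $\varphi_i$. Continuity and positive homogeneity of $\Phi$ are immediate, because a finite maximum of continuous positively homogeneous functions keeps both properties, and the factor $2$ changes neither.

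For the level set, I will use the elementary identity $\{\max_i\varphi_i\ge0\}=\bigcup_i\{\varphi_i\ge0\}$. Since each $\varphi_i$ is a defining function for $C_i$, this gives $\{\Phi\ge0\}=\bigcup_i C_i$. Intersecting with $K$ and using $K\cap C_i=\{0_X\}$ for every $i$ then yields $\{\Phi\ge0\}\cap K=\{0_X\}$.

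For $k\in K\setminus\{0_X\}$, the point $k$ lies outside every $C_i$, so $\varphi_i(k)<0$ for each $i$. Taking the maximum over finitely many indices keeps strict negativity, hence $\Phi(k)<0$. The finiteness of the family matters here, since an infimum over infinitely many negative values could reach $0$.

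For $a\in A$, the theorem places $a$ in some $V_i$, so $\varphi_i(a)>\frac12\|a\|$. Therefore $\Phi(a)\ge2\varphi_i(a)>\|a\|$. Because $A\subset X\setminus\{0_X\}$, we also have $\|a\|>0$. Together these give the chain $\Phi(k)<0<\|a\|<\Phi(a)$.

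No step is a real obstacle. The only points needing care are that the finite maximum preserves the strict inequality on $K\setminus\{0_X\}$, and that the level set of a maximum is the union of the level sets.
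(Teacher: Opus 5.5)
Your proposal is correct and follows the paper's proof essentially step by step: the level set of the maximum is the union of the $C_i$, strict negativity on $K\setminus\{0_X\}$ comes from $k\notin C_i$ for every $i$, and $\Phi(a)\geq 2\varphi_i(a)>\|a\|$ follows from $a\in V_i$. One small wording slip in your side remark: the concern with infinitely many indices is that a \emph{supremum} of negative values could equal $0$, not an infimum.
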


\begin{proof}
Continuity and positive homogeneity are immediate. Since
$C_i=\{\varphi_i\geq0\}$, one has
$\{\Phi\geq0\}=\bigcup_{i=1}^n C_i$, whose intersection with $K$ is
$\{0_X\}$. If $k\in K\setminus\{0_X\}$, then $k\notin C_i$ and hence
$\varphi_i(k)<0$ for every $i$, so $\Phi(k)<0$. Finally, for each
$a\in A$, the preceding theorem gives an index $i$ such that
$\varphi_i(a)>\frac12\|a\|$, and therefore $\Phi(a)>\|a\|$.
\end{proof}
For compact sets, the positive distance from the origin converts the
homogeneous margin in the preceding corollary into a fixed positive
separation level.
\begin{corollary}
\label{cor:single-separator-compact}
Let $\cF$ be directionally fine, let $K\subset X$ be a closed cone,
and let $A\subset X$ be compact with $A\cap K=\varnothing$. Then there
exist $C_1,\ldots,C_n\in\cF$ and a continuous positively homogeneous
function $\Phi:X\to\R$ such that
$A\subset\bigcup_{i=1}^n\intt C_i$, $K\cap C_i=\{0_X\}$ for every $i$,
and $\{\Phi\geq0\}=\bigcup_{i=1}^n C_i$. Moreover,
\[
\Phi(k)\leq0<1<\Phi(a),
\quad \forall k\in K,\ a\in A,\qquad
\Phi(k)<0,
\quad \forall k\in K\setminus\{0_X\}.
\]
\end{corollary}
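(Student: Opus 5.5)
The plan is to reduce Corollary~\ref{cor:single-separator-compact} to Corollary~\ref{cor:single-separator-directionally-compact}, applied to $A$ itself, and then rescale the separator. Two facts make this possible. First, $0_X\in K$ and $A\cap K=\varnothing$, so $0_X\notin A$; thus $A\subset X\setminus\{0_X\}$. Second, the normalization map $a\mapsto a/\|a\|$ is continuous on $X\setminus\{0_X\}$, so $\Pi_{S_X}(A)$ is the continuous image of a compact set. Hence it is compact, and $A$ is directionally compact. The hypotheses of Theorem~\ref{thm:finite-separation-directionally-compact} therefore hold.

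That theorem provides $C_1,\ldots,C_n\in\cF$ and adapted defining functions $\varphi_i$ with the following properties: $\varphi_i\le0$ on $K$, $K\cap C_i=\{0_X\}$, and $A\subset\bigcup_i V_i\subset\bigcup_i\intt C_i$. Set $\Phi_0:=2\max_i\varphi_i$. By Corollary~\ref{cor:single-separator-directionally-compact}, $\Phi_0$ is continuous and positively homogeneous, $\{\Phi_0\ge0\}=\bigcup_i C_i$, and $\Phi_0(k)<0$ for $k\in K\setminus\{0_X\}$. Moreover $\Phi_0(a)>\|a\|$ for every $a\in A$, and $\Phi_0(0_X)=0$. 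It follows that $\Phi_0\le0$ on all of $K$.

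The remaining step is to turn the homogeneous margin $\Phi_0(a)>\|a\|$ into the fixed level $1$. Since $A$ is compact and $0_X\notin A$, we have $m:=\min_{a\in A}\|a\|>0$, because the norm is continuous and attains its minimum on $A$. Put $\Phi:=\Phi_0/m$. Then $\Phi(a)>\|a\|/m\ge1$ for every $a\in A$. Multiplying by the positive constant $1/m$ does not change $\{\Phi\ge0\}$, nor the signs of $\Phi$ on $K$, nor continuity or positive homogeneity. This yields all the claimed properties: $\Phi(k)\le0<1<\Phi(a)$ for $k\in K$ and $a\in A$, and $\Phi(k)<0$ for $k\in K\setminus\{0_X\}$.

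None of these steps is a real obstacle. The only points needing care are showing $0_X\notin A$, which is needed both for directional compactness to make sense and for $m>0$, and the continuity argument giving compactness of $\Pi_{S_X}(A)$.
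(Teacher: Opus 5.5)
Your proposal is correct and follows essentially the same route as the paper's proof. You show $0_X\notin A$, obtain directional compactness from continuity of the normalization map, and invoke Theorem~\ref{thm:finite-separation-directionally-compact} together with Corollary~\ref{cor:single-separator-directionally-compact}; you then rescale the separator by $1/\min_{a\in A}\|a\|$ to convert the margin $\Phi_0(a)>\|a\|$ into the fixed level $1$.
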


\begin{proof}
Since $0_X\in K$ and $A\cap K=\varnothing$, one has $0_X\notin A$.
The normalization map is continuous on $A$, so $A$ is directionally
compact, and
$\delta:=d(0_X,A)=\min_{a\in A}\|a\|>0$.

Apply Theorem~\ref{thm:finite-separation-directionally-compact} and
Corollary~\ref{cor:single-separator-directionally-compact} to obtain
$C_1,\ldots,C_n$ and a separator $\Psi$, and set
$\Phi:=\delta^{-1}\Psi$. This positive rescaling preserves all signs
and level sets. Moreover,
$\Phi(a)>\|a\|/\delta\geq1$ for every $a\in A$, which proves the
result.
\end{proof}

The preceding separation result is formulated in terms of norm
compactness of the set of directions. It is therefore natural to ask
to what extent this hypothesis can be expressed in terms of weak
compactness. The next observation isolates precisely the additional
topological condition that is needed.
\begin{proposition}
\label{prop:directional-compactness-weak}
Let $A\subset X\setminus\{0_X\}$ and set
$D:=\Pi_{S_X}(A)$. Then the following assertions are
equivalent:
\begin{enumerate}
\item[(i)] $A$ is directionally compact;
\item[(ii)] $D$ is weakly compact and the identity mapping $\operatorname{id}:
(D,w) \longrightarrow (D,\|\cdot\|)$ is continuous.
\end{enumerate}
\end{proposition}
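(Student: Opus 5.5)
For (i)$\Rightarrow$(ii), assume $D$ is norm compact. The norm topology is finer than the weak topology, so the identity $(D,\|\cdot\|)\to(D,w)$ is continuous. Hence $D$, as the continuous image of a compact space, is weakly compact. The weak topology is Hausdorff, since $X^*$ separates points by Hahn--Banach. So this identity is a continuous bijection from a compact space onto a Hausdorff space, and therefore a homeomorphism. In particular its inverse $\operatorname{id}:(D,w)\to(D,\|\cdot\|)$ is continuous, which gives (ii).

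For (ii)$\Rightarrow$(i), assume $D$ is weakly compact and $\operatorname{id}:(D,w)\to(D,\|\cdot\|)$ is continuous. Then $D=\operatorname{id}(D)$ is the continuous image of the compact space $(D,w)$, hence compact in the norm topology. This is exactly directional compactness of $A$ in the sense of Definition~\ref{def:directionally_compact}.

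Both implications rest only on the elementary facts that continuous images of compact spaces are compact and that a continuous bijection from a compact space onto a Hausdorff space is a homeomorphism. The one point to verify is that the relevant topologies are the subspace topologies on $D$ induced by $w$ and by $\|\cdot\|$, and that $(D,w)$ is Hausdorff. Both are immediate. I therefore expect no real obstacle; the mild subtlety is only to state clearly in which direction the homeomorphism argument is used.
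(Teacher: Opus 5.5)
Your proposal is correct and follows essentially the same route as the paper: for (i)$\Rightarrow$(ii) it uses that a continuous bijection from the norm-compact space $(D,\|\cdot\|)$ onto the Hausdorff space $(D,w)$ is a homeomorphism, and for (ii)$\Rightarrow$(i) it uses that continuous images of compact spaces are compact. Your added justifications, namely that the norm topology is finer than the weak one and that the weak topology is Hausdorff by Hahn--Banach, simply make explicit what the paper leaves implicit.
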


\begin{proof}
If $D$ is norm-compact, then it is weakly
compact, and $\operatorname{id}:(D,\|\cdot\|)\to(D,w)$, is a continuous bijection from a compact space onto a Hausdorff space. Hence it is a homeomorphism, and its inverse is continuous.

Conversely, if $D$ is weakly compact and
$\operatorname{id}:(D,w)\to(D,\|\cdot\|)$ is continuous, then $D$ is
norm-compact as the continuous image of a compact space.
\end{proof}

The preceding characterization suggests isolating the class of normed
spaces for which the additional weak-to-norm continuity condition is
automatic on the unit sphere. This leads naturally to the Kadec
property, which goes back to the classical work of Kadec on weak and
norm convergence \cite{Kadec1958}; see also \cite{Raja1999} for a
modern treatment.

\begin{definition}
\label{def:kadec}
A normed space $X$ is said to have a \emph{Kadec norm}, or simply to be
\emph{Kadec}, if the weak and norm topologies coincide on the unit
sphere $S_X$. Equivalently, the identity mapping
\[
\operatorname{id}:(S_X,w)\longrightarrow(S_X,\|\cdot\|)
\]
is continuous.
\end{definition}

For Kadec spaces, the additional continuity condition in
Proposition~\ref{prop:directional-compactness-weak} is automatic.
Consequently, directional compactness admits a purely weak
characterization.

\begin{corollary}
\label{cor:kadec-directional-compactness}
Let $X$ be a Kadec normed space and let
$A\subset X\setminus\{0_X\}$. Then the following assertions are
equivalent:
\begin{enumerate}
\item[(i)] $A$ is directionally compact;
\item[(ii)] $A$ is directionally weakly compact.
\end{enumerate}
\end{corollary}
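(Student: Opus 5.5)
The plan is to apply Proposition~\ref{prop:directional-compactness-weak} directly, taking $D:=\Pi_{S_X}(A)$. By definition $D\subset S_X$, which is exactly what allows the Kadec hypothesis to be used.

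The implication (i)$\Rightarrow$(ii) needs no Kadec assumption. If $D$ is norm compact, then it is weakly compact, because the weak topology is coarser than the norm topology and the identity $(D,\|\cdot\|)\to(D,w)$ is continuous. By Definition~\ref{def:directionally_compact}, this says that $A$ is directionally weakly compact.

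For (ii)$\Rightarrow$(i), assume $D$ is weakly compact and check condition~(ii) of Proposition~\ref{prop:directional-compactness-weak}. The one remaining point is continuity of $\operatorname{id}:(D,w)\to(D,\|\cdot\|)$.
\begin{itemize}
\item Since $X$ is Kadec, $\operatorname{id}:(S_X,w)\to(S_X,\|\cdot\|)$ is continuous by Definition~\ref{def:kadec}.
\item The weak topology on $D$ is the subspace topology that $D$ inherits from $(S_X,w)$, and likewise for the norm topology. This holds because relative topologies are transitive: $D\subset S_X\subset X$.
\item A restriction of a continuous map to a subspace is continuous, so the identity on $D$ from the weak to the norm topology is continuous.
\end{itemize}
Proposition~\ref{prop:directional-compactness-weak} then gives that $D$ is norm compact, that is, $A$ is directionally compact.

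No step is a real obstacle. The only point to handle carefully is the topological bookkeeping in the second bullet: the restriction of the weak topology of $X$ to $D$ agrees with the restriction to $D$ of the relative weak topology on $S_X$. This is immediate from the definition of subspace topologies.
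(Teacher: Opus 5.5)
Your proposal is correct and follows the paper's proof: the paper likewise notes that $\Pi_{S_X}(A)\subset S_X$, so the Kadec property makes $\operatorname{id}:(\Pi_{S_X}(A),w)\to(\Pi_{S_X}(A),\|\cdot\|)$ continuous, and then invokes Proposition~\ref{prop:directional-compactness-weak}. Your separate treatment of (i)$\Rightarrow$(ii) and the remark on relative topologies only make explicit what the paper leaves implicit.
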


\begin{proof}
Since $\Pi_{S_X}(A)\subset S_X$, the Kadec property implies that the identity mapping
\[
\operatorname{id}:
(\Pi_{S_X}(A),w)
\longrightarrow
(\Pi_{S_X}(A),\|\cdot\|)
\]
is continuous. The conclusion follows from
Proposition~\ref{prop:directional-compactness-weak}.
\end{proof}

\begin{remark}\label{rem:kadec-examples}
The Kadec property is automatic in every finite-dimensional normed space.
Important infinite-dimensional examples are provided by uniformly convex
spaces. In particular, Hilbert spaces and the classical spaces $\ell^p$ and
$L^p(\mu)$, $1<p<\infty$, endowed with their usual norms, are Kadec; see,
e.g., \cite{Megginson1998,Clarkson1936}.
\end{remark}

Combining the preceding equivalence with
Theorem~\ref{thm:finite-separation-directionally-compact} and
Corollary~\ref{cor:single-separator-directionally-compact}
yields the following weak-compactness version of finite separation in
Kadec spaces.

\begin{corollary}
\label{cor:kadec-directionally-weak-compact-separation}
Let $X$ be a Kadec normed space, let $\cF$ be directionally fine, let
$K\subset X$ be a closed cone, and let
$A\subset X\setminus\{0_X\}$ be directionally weakly compact with
$A\cap K=\varnothing$. Then there exist $C_1,\ldots,C_n\in\cF$ and
adapted defining functions $\varphi_1,\ldots,\varphi_n$ such that
$A\subset\bigcup_{i=1}^n\intt C_i$ and
$K\cap C_i=\{0_X\}$ for every $i$. Moreover, setting $\Phi:=2\max_{1\leq i\leq n}\varphi_i$, the function
$\Phi$ is continuous and positively homogeneous,
$\{\Phi\geq0\}=\bigcup_{i=1}^nC_i$,
$\{\Phi\geq0\}\cap K=\{0_X\}$, and
$\Phi(k)<0<\|a\|<\Phi(a)$ for every
$k\in K\setminus\{0_X\}$ and $a\in A$.
\end{corollary}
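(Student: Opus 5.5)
The proof is a direct reduction to the norm-compact case. First I will use Corollary~\ref{cor:kadec-directional-compactness}: $X$ is Kadec and $A\subset X\setminus\{0_X\}$ is directionally weakly compact, so $A$ is directionally compact. In other words, $\Pi_{S_X}(A)$ is norm compact. The remaining hypotheses, namely that $\cF$ is directionally fine, $K$ is a closed cone and $A\cap K=\varnothing$, are exactly those of Theorem~\ref{thm:finite-separation-directionally-compact}.

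Next I will apply Theorem~\ref{thm:finite-separation-directionally-compact}. It yields $C_1,\ldots,C_n\in\cF$ and adapted defining functions $\varphi_1,\ldots,\varphi_n$ such that $\varphi_i\le0$ on $K$, $K\cap C_i=\{0_X\}$ for each $i$, and
\[
A\subset\bigcup_{i=1}^n V_i\subset\bigcup_{i=1}^n\intt C_i,
\qquad
V_i=\{x:\varphi_i(x)>\tfrac12\|x\|\}.
\]
This gives the covering statement and the trivial intersections with $K$.

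Finally, with $\Phi:=2\max_i\varphi_i$, Corollary~\ref{cor:single-separator-directionally-compact} applies verbatim. It gives continuity and positive homogeneity of $\Phi$, the identity $\{\Phi\ge0\}=\bigcup_i C_i$, the property $\{\Phi\ge0\}\cap K=\{0_X\}$, and the strict inequalities $\Phi(k)<0<\|a\|<\Phi(a)$ for $k\in K\setminus\{0_X\}$ and $a\in A$.

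There is no genuine obstacle. The only point that needs care is the first step: the weak-to-norm transfer uses that $\Pi_{S_X}(A)$ lies in $S_X$, which is exactly where the Kadec property acts. That transfer has already been isolated in Corollary~\ref{cor:kadec-directional-compactness}.
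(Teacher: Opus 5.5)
Your proposal is correct and follows the paper's own proof. The Kadec property, via Corollary~\ref{cor:kadec-directional-compactness}, turns directional weak compactness of $A$ into directional compactness. Theorem~\ref{thm:finite-separation-directionally-compact} and Corollary~\ref{cor:single-separator-directionally-compact} then give all the stated conclusions directly.
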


\begin{proof}
By Corollary~\ref{cor:kadec-directional-compactness}, $A$ is
directionally compact. The conclusions now follow from
Theorem~\ref{thm:finite-separation-directionally-compact} and
Corollary~\ref{cor:single-separator-directionally-compact}.
\end{proof}

Here one must distinguish weak compactness of $A$ from directional weak
compactness. Even when $0_X\notin A$, the former does not imply the latter:
the normalization mapping need not preserve weak compactness. The following
example shows that this failure may occur even in a Hilbert space.
\begin{example}
\label{ex:weak-compactness-radial-projection}
Let $H=\ell_2$, and let $(e_n)_{n\geq1}$ be its canonical basis. Consider $A:=\{e_1\}\cup\{e_1+e_n:n\geq2\}$. Since $(e_n)$ converges weakly to $0_H$, the sequence $(e_1+e_n)$ converges weakly to $e_1$. Hence $A$ is weakly compact. However,
$\Pi_{S_H}(A)=\{e_1\}\cup\{(e_1+e_n)/\sqrt2:n\ge2\}$ is not weakly compact. Indeed, the sequence $((e_1+e_n)/\sqrt2)_{n\geq2}$, converges weakly to $e_1/\sqrt2$, which does not belong to $S_H$, and hence does not belong to
$\Pi_{S_H}(A)$. Thus $\Pi_{S_H}(A)$ is not weakly closed and therefore
cannot be weakly compact. Thus $A$ is weakly compact but not directionally weakly compact. Equivalently, the normalization mapping $x\longmapsto \frac{x}{\|x\|}$ need not preserve weak compactness.
\end{example}

\subsection{Approximation and Separation of Cones}\label{subsec:approximation-separation-cones}
We now specialize the preceding directional arguments to cones, for which compactness of the norm-base provides the natural framework for finite approximation and separation.

We begin by recording some elementary properties of the cones generated by
balls. These cones will play the role of canonical metric conic
neighborhoods.

Let $u\in S_X$ and $0<\varepsilon<1$. We set $G(u,\varepsilon):=\operatorname{cone} B(u,\varepsilon)$.

Notice that, if $\R_+u$ denotes the ray generated by $u$, then $(\mathbb{R}_+u)\cap S_X=\{u\}$, and consequently $G(u,\varepsilon)$ is precisely the
$\varepsilon$-conic neighborhood of the ray $\R_+u$ in the sense of
Kasimbeyli; see \cite{Kasimbeyli2010}.

The next proposition collects the basic geometric properties of these canonical conic neighborhoods and provides the directional estimate used below.

\begin{proposition}\label{prop:propiedades_epsilon_entornos_conicos_de_rayos}
Let $u\in S_X$ and $0<\varepsilon<1$. Then
$G(u,\varepsilon)$ is a closed, convex, solid and pointed cone.
In addition,
\begin{equation}\label{eq:prop_propiedades_epsilon_entornos_conicos_de_rayos}
B(u,\varepsilon)\cap S_X
\subset
G(u,\varepsilon)\cap S_X
=
\left\{
v\in S_X:
d(u,\mathbb R_+v)\le\varepsilon
\right\}
\subset
B(u,2\varepsilon)\cap S_X.
\end{equation}
\end{proposition}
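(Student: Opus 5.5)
The plan is to verify the four structural properties of $G(u,\varepsilon)$ directly from $G(u,\varepsilon)=\{ty:t\ge0,\ y\in B(u,\varepsilon)\}$, using one auxiliary functional throughout. Fix $f\in S_{X^*}$ with $f(u)=1$ (Hahn--Banach). For every $y\in B(u,\varepsilon)$ one has $|f(y)-1|\le\|y-u\|\le\varepsilon$, hence
\[
1-\varepsilon\le f(y)\le1+\varepsilon .
\]
Since $\varepsilon<1$, the lower bound is strictly positive, and this is what drives pointedness and closedness.

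The four properties will be checked in the following order.
\begin{enumerate}
\item \emph{Convexity.} If $t_1y_1+t_2y_2$ has $t_1+t_2>0$, it equals $(t_1+t_2)\bigl(\tfrac{t_1}{t_1+t_2}y_1+\tfrac{t_2}{t_1+t_2}y_2\bigr)$. The bracketed point lies in the convex ball $B(u,\varepsilon)$, so the sum lies in $G(u,\varepsilon)$.
\item \emph{Solidity.} $B(u,\varepsilon)\subset G(u,\varepsilon)$, so $u$ is an interior point.
\item \emph{Pointedness.} For $x=ty\in G(u,\varepsilon)$ with $x\neq0_X$ we have $t>0$, so $f(x)=tf(y)\ge t(1-\varepsilon)>0$. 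Thus $f>0$ on $G(u,\varepsilon)\setminus\{0_X\}$. If $x$ and $-x$ both lie in the cone, then $f(x)$ and $-f(x)$ cannot both be positive, so $x=0_X$. The same bound shows $G(u,\varepsilon)\neq X$, so the cone is proper.
\item \emph{Closedness.} This is the step needing care, since the cone of a closed set need not be closed. Take $x_n=t_ny_n\to x$ with $y_n\in B(u,\varepsilon)$. From $t_n\le f(x_n)/(1-\varepsilon)$ the sequence $(t_n)$ is bounded, so after passing to a subsequence $t_n\to t\ge0$.
 \begin{itemize}
 \item If $t>0$, then $y_n=x_n/t_n\to x/t$, which lies in the closed ball $B(u,\varepsilon)$, so $x=t(x/t)\in G(u,\varepsilon)$.
 \item If $t=0$, then $\|y_n\|\le1+\varepsilon$ forces $x_n\to0_X$, so $x=0_X\in G(u,\varepsilon)$.
 \end{itemize}
\end{enumerate}

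For \eqref{eq:prop_propiedades_epsilon_entornos_conicos_de_rayos}, the first inclusion is immediate from $B(u,\varepsilon)\subset G(u,\varepsilon)$.

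For the equality, take $v\in S_X$. First, $v\in G(u,\varepsilon)$ means $v=ty$ with $t>0$ (since $v\ne0_X$) and $y\in B(u,\varepsilon)$. Putting $s=1/t$ gives $\|u-sv\|\le\varepsilon$, hence $d(u,\R_+v)\le\varepsilon$.

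Conversely, the map $s\mapsto\|u-sv\|$ on $[0,\infty)$ is continuous and tends to $+\infty$, so the infimum defining $d(u,\R_+v)$ is attained at some $s\ge0$ with $\|u-sv\|\le\varepsilon$. Here $s>0$, because $s=0$ would give $\|u\|=1\le\varepsilon$. Then $sv\in B(u,\varepsilon)$, and so $v=s^{-1}(sv)\in G(u,\varepsilon)$.

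Finally, take $v$ in the middle set with $s$ as above. The reverse triangle inequality gives $|1-s|=\bigl|\|u\|-\|sv\|\bigr|\le\|u-sv\|\le\varepsilon$. Therefore
\[
\|u-v\|\le\|u-sv\|+\|sv-v\|=\|u-sv\|+|s-1|\le2\varepsilon,
\]
which proves the last inclusion. The main obstacle is closedness, which rests entirely on the uniform positive lower bound $f\ge1-\varepsilon$ on $B(u,\varepsilon)$ to control the scalars $t_n$.
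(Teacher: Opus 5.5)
Your proof is correct. The directional estimate \eqref{eq:prop_propiedades_epsilon_entornos_conicos_de_rayos} is essentially the paper's argument. The equality is proved exactly as you do, by attainment of $d(u,\mathbb R_+v)$ at some $s>0$. Your bound $\|u-v\|\le\|u-sv\|+|s-1|\le2\varepsilon$ is the paper's estimate $\|y/\|y\|-u\|\le|1-\|y\||+\|y-u\|$ with $y=sv$.

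The only real difference is in the structural part. The paper gets closedness by citing \cite[Lemma 2.1.43(iii)]{Gopfert2023} and calls convexity, solidity and pointedness standard. You instead give a self-contained argument from a single norming functional $f$ satisfying $f\ge1-\varepsilon>0$ on $B(u,\varepsilon)$. That one bound gives pointedness and properness at once, and it bounds the scalars $t_n$ in the closedness argument. Your version is longer, but it makes explicit exactly where the hypothesis $\varepsilon<1$ enters.
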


\begin{proof}
Closedness follows from \cite[Lemma 2.1.43(iii)]{Gopfert2023}.
The remaining assertions are immediate or standard, and we therefore only prove the directional estimate. The first inclusion is immediate.

We next prove that
\(G(u,\varepsilon)\cap S_X
=
\{v\in S_X:d(u,\mathbb R_+v)\le\varepsilon\}\).
Let \(v\in G(u,\varepsilon)\cap S_X\). Then there exist \(t>0\) and
\(y\in B(u,\varepsilon)\) such that \(v=ty\). Hence
\(y=t^{-1}v\in\mathbb R_+v\), and therefore
\(d(u,\mathbb R_+v)\le\|u-y\|\le\varepsilon\).

Conversely, let \(v\in S_X\) satisfy
\(d(u,\mathbb R_+v)\le\varepsilon\). Since the ray
\(\mathbb R_+v\) is closed and the function
\(t\mapsto\|u-tv\|\) attains its minimum on \(\mathbb R_+\), there exists
\(t\ge0\) such that
\(\|u-tv\|=d(u,\mathbb R_+v)\le\varepsilon\). Moreover, \(t>0\), since
\(t=0\) would imply \(1=\|u\|\le\varepsilon\), contrary to
\(\varepsilon<1\). Thus \(tv\in B(u,\varepsilon)\), and consequently
\(v=t^{-1}(tv)\in G(u,\varepsilon)\). This proves the equality.

It remains to prove that
\(G(u,\varepsilon)\cap S_X\subset B(u,2\varepsilon)\cap S_X\).
Let \(0\neq x\in G(u,\varepsilon)\) and write \(x=ty\), with \(t>0\)
and \(y\in B(u,\varepsilon)\). Since
\(x/\|x\|=y/\|y\|\), we have  $\|x/\|x\|-u\|
\leq\|y/\|y\|-y\|+\|y-u\|
=|1-\|y\||+\|y-u\|\leq2\varepsilon$. Hence
\(G(u,\varepsilon)\cap S_X\subset B(u,2\varepsilon)\cap S_X\).
\end{proof}

\begin{remark}
\label{rem:directional-topological-interpretation}
Proposition~\ref{prop:propiedades_epsilon_entornos_conicos_de_rayos}
shows that the cones $G(u,\varepsilon)$ provide a canonical metric
description of neighborhoods of directions. Indeed, \eqref{eq:prop_propiedades_epsilon_entornos_conicos_de_rayos}
means that the spherical sections $G(u,\varepsilon)\cap S_X$ determine the
usual norm topology on $S_X$.

This also gives a topological interpretation of
Definition~\ref{def:directionally_fine}. A family $\mathcal F$ is
directionally fine if and only if, for every $u\in S_X$, the sets
$C\cap S_X$, with $C\in\mathcal F$ and
$u\in\operatorname{int}C$, form a neighborhood base at $u$ in $S_X$.
Thus directional fineness may be viewed as a local-base property on
the space of directions, canonically identified with the positive rays
of $X$.

This viewpoint is closely related to the conic dilation studied by
Seeger~\cite{Seeger2025} in the finite-dimensional Euclidean setting,
where a convex cone is enlarged by considering cones generated by
metric neighborhoods of its spherical section.
\end{remark}

To pass from local directional approximations to finite approximations
of an entire cone, we need compactness of its set of directions. This
leads naturally to the following notion.
\begin{definition}\label{defi:locally_compact_cone}
Let $X$ be a normed space and let $K\subset X$ be a cone.
We say that $K$ is \emph{locally compact} if $K$, endowed with the
relative norm topology inherited from $X$, is a locally compact
topological space; equivalently, for every $x\in K$, there exists $r>0$ such that
\(K\cap B(x,r)\)
is compact.
\end{definition}

Locally compact cones have been considered in several contexts in
infinite-dimensional optimization and conic analysis; see, for instance,
\cite{Gowda1989,deLaatVallentin2015}. The relationship between local compactness of cones and compact bases is classical. For a nontrivial cone $K\subset X$, we call
$B_K:=K\cap S_X$ its norm-base (or spherical section). The following elementary characterization of locally compact cones in terms of the norm-base will be useful throughout the paper.

\begin{proposition}\label{prop:local_compactness}
Let $X$ be a normed space and let $K\subset X$ be a nontrivial cone. The following assertions are equivalent:
\begin{enumerate}
    \item[(i)] $K$ is locally compact.

    \item[(ii)] The norm-base $B_K$ is compact.

    \item[(iii)] $K\cap rS_X$ is compact for some $r>0$
    (equivalently, for every $r>0$).

    \item[(iv)] $K\cap rB_X$ is  compact for some $r>0$
    (equivalently, for every $r>0$).
\end{enumerate}
\end{proposition}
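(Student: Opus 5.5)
The plan is to use the homeomorphisms given by scaling together with the radial map. The main tools are that $x\mapsto rx$ is a homeomorphism of $X$ carrying $K$ onto itself, and that $K\cap(r_1B_X\setminus r_0\intt B_X)$ is the continuous image of $[r_0,r_1]\times B_K$ under $(t,v)\mapsto tv$. I will prove (ii)$\Leftrightarrow$(iii), (iii)$\Rightarrow$(iv), (iv)$\Rightarrow$(i) and (i)$\Rightarrow$(iv)$\Rightarrow$(ii).

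For (ii)$\Leftrightarrow$(iii), observe that $K\cap rS_X=rB_K$, since $K$ is a cone and $rK=K$ for $r>0$. As $x\mapsto rx$ is a homeomorphism, $K\cap rS_X$ is compact for some (equivalently every) $r>0$ exactly when $B_K$ is compact. This also settles the parenthetical "some/every" in (iii).

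For (ii)$\Rightarrow$(iv), fix $r>0$ and consider $F:[0,r]\times B_K\to X$, $F(t,v)=tv$. It is continuous, and its domain is compact by Tychonoff (here a product of two compact metric spaces). Its image is exactly $K\cap rB_X$: every nonzero $x$ in this set equals $\|x\|\cdot(x/\|x\|)$ with $x/\|x\|\in B_K$, and $0_X=F(0,v)$ for any $v\in B_K$, which is nonempty because $K$ is nontrivial. So $K\cap rB_X$ is compact for every $r>0$. Conversely, if $K\cap rB_X$ is compact for some $r$, scaling shows it is compact for every $r$.

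For (iv)$\Rightarrow$(i), take $x\in K$ and $r=1$. Then $K\cap B(x,1)\subset K\cap(\|x\|+1)B_X$, which is compact. To conclude, I need $K\cap B(x,1)$ to be closed in that compact set. Closedness of $K$ is not among the hypotheses, but compactness of $K\cap sB_X$ for every $s$ makes $K$ closed: a limit $x$ of a sequence in $K$ lies in some $K\cap sB_X$, which is compact and hence closed, so $x\in K$. Then $K\cap B(x,1)$ is a closed subset of a compact set, hence compact.

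For (i)$\Rightarrow$(ii), which I expect to be the main step, apply local compactness at $x=0_X$ to get $r>0$ with $K\cap B(0_X,r)=K\cap rB_X$ compact. Then $K\cap rS_X=(K\cap rB_X)\cap rS_X$ is closed in a compact set, hence compact, so $B_K=r^{-1}(K\cap rS_X)$ is compact. The point to check is that the definition gives compactness of $K\cap B(x,r)$ itself, and not merely a compact neighbourhood within $K$. With the "equivalently" formulation in Definition~\ref{defi:locally_compact_cone} this is given; with the topological formulation I would first shrink to a closed ball contained in the compact neighbourhood. If needed, closedness of $K$ follows as above once some $K\cap rB_X$ is compact.
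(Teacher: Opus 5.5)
Your proof is correct and follows essentially the same route as the paper: scaling for (ii)$\Leftrightarrow$(iii), the continuous image of $[0,r]\times B_K$ for (ii)$\Rightarrow$(iv), local compactness at $0_X$ intersected with $rS_X$ for (i)$\Rightarrow$(ii), and a small ball inside a compact large ball for the converse. The one difference is that your detour through closedness of $K$ in (iv)$\Rightarrow$(i) is unnecessary, since $K\cap B(x,1)=\bigl(K\cap(\|x\|+1)B_X\bigr)\cap B(x,1)$ is a compact set intersected with a closed set and is therefore compact.
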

\begin{proof}
The equivalence between \textnormal{(ii)} and \textnormal{(iii)} follows
from positive homogeneity, since for every $r>0$ the map
$u\mapsto ru$ is a homeomorphism from $B_K$ onto $K\cap rS_X$.

Assume \textnormal{(ii)}. For every $r>0$,
$K\cap rB_X=\{tu:0\leq t\leq r,\ u\in B_K\}$, which is the continuous
image of the compact set $[0,r]\times B_K$ under $(t,u)\mapsto tu$.
Thus \textnormal{(iv)} holds. Conversely, if $K\cap rB_X$ is compact
for some $r>0$, then $K\cap rS_X=(K\cap rB_X)\cap rS_X$ is compact,
so \textnormal{(iii)} holds.

Assume now \textnormal{(ii)}. Then $K\cap RB_X$ is compact for every
$R>0$. Given $x\in K$, choose $\rho>0$ and
$R>\|x\|+\rho$. The set $K\cap B(x,\rho)$ is a closed subset of the
compact set $K\cap RB_X$ and a neighborhood of $x$ in $K$; hence $K$
is locally compact.

Finally, suppose that $K$ is locally compact. Since $0_X\in K$, there
exists $\delta>0$ such that $K\cap\delta B_X$ is compact. For any
$r\in(0,\delta)$, the set
$K\cap rS_X=(K\cap\delta B_X)\cap rS_X$ is compact. Thus
\textnormal{(iii)} holds.
\end{proof}

\begin{remark}
\label{rem:locally_compact_cone_closed}
Every nontrivial locally compact cone $K\subset X$ is closed. Indeed, by
Proposition~\ref{prop:local_compactness}, its norm-base
\(
B_K=K\cap S_X
\)
is compact, and
\(
K=\mathbb R_+\cdot B_K.
\)
Since $B_K$ is compact and $0_X\notin B_K$, the cone
$\mathbb R_+\cdot B_K$ is closed.
\end{remark}

\begin{remark}\label{rem:locall_cpto_and_directionally_cpto}
Let $K\subset X$ be a nontrivial cone. Then $K$ is locally compact if and only if $K\setminus\{0_X\}$ is directionally compact. Indeed, since $K$ is a cone,
\(
\Pi_{S_X}\bigl(K\setminus\{0_X\}\bigr)
=
K\cap S_X.
\)
Therefore $K\setminus\{0_X\}$ is directionally compact if and only if
the norm-base $K\cap S_X$ is compact. By the preceding proposition,
this is equivalent to the local compactness of $K$.
\end{remark}

In Kadec spaces, Proposition~\ref{prop:local_compactness} admits the
following weak formulation.

\begin{corollary}
\label{cor:kadec-weakly-compact-base}
Let $X$ be a Kadec normed space and let $K\subset X$ be a nontrivial
cone. Then $K$ is locally compact if and only if the norm-base $B_K$ is weakly compact.
\end{corollary}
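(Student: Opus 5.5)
The statement follows by chaining two earlier results: Proposition~\ref{prop:local_compactness} turns local compactness of $K$ into norm compactness of $B_K$, and the Kadec property, through the mechanism of Proposition~\ref{prop:directional-compactness-weak} and Corollary~\ref{cor:kadec-directional-compactness}, turns norm compactness of a subset of $S_X$ into weak compactness. The cleanest route goes through Remark~\ref{rem:locall_cpto_and_directionally_cpto}. Since $K$ is a nontrivial cone, the set $A:=K\setminus\{0_X\}$ is nonempty and
\[
\Pi_{S_X}(A)=K\cap S_X=B_K .
\]
That remark shows $K$ is locally compact if and only if $A$ is directionally compact, that is, if and only if $B_K$ is norm compact.

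Next I apply Corollary~\ref{cor:kadec-directional-compactness} to this $A$. In a Kadec space it says $A$ is directionally compact if and only if $A$ is directionally weakly compact. By definition, the latter means exactly that $\Pi_{S_X}(A)=B_K$ is weakly compact. Chaining the two equivalences gives: $K$ locally compact $\iff$ $B_K$ norm compact $\iff$ $B_K$ weakly compact.

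The direct argument is just as short, and I would mention it as a check. Norm compactness always implies weak compactness, because the weak topology is coarser. For the converse, $B_K\subset S_X$, so the Kadec property makes $\operatorname{id}:(B_K,w)\to(B_K,\|\cdot\|)$ continuous. A weakly compact $B_K$ is therefore norm compact, as the continuous image of a compact space.

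There is no real obstacle. The only point needing care is confirming that $\Pi_{S_X}(K\setminus\{0_X\})$ equals $B_K$ exactly, which uses that $K$ is a cone. This is what allows the directional notions to be replaced by statements about the norm-base itself. Nontriviality of $K$ is needed only so that $A\neq\varnothing$ and $B_K$ is well defined.
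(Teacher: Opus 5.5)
Your proposal is correct and follows the paper's own argument. The paper proves this corollary as the one-line chain Remark~\ref{rem:locall_cpto_and_directionally_cpto} $\Rightarrow$ Corollary~\ref{cor:kadec-directional-compactness}, using $\Pi_{S_X}(K\setminus\{0_X\})=B_K$; your direct check via the Kadec property on $B_K\subset S_X$ is also valid and simply unpacks Proposition~\ref{prop:directional-compactness-weak}.
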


\begin{proof}
This follows from Remark~\ref{rem:locall_cpto_and_directionally_cpto}
and Corollary~\ref{cor:kadec-directional-compactness}, since $\Pi_{S_X}(K\setminus\{0_X\})=B_K$.
\end{proof}

Consequently, throughout the remainder of this section, whenever $X$
is Kadec, any local compactness assumption on a cone may equivalently
be replaced by weak compactness of its norm-base. By Remark~\ref{rem:kadec-examples}, this applies in particular to Hilbert spaces and to the spaces $\ell^p$ and $L^p(\mu)$,
$1<p<\infty$, with their usual norms.

For $0<\varepsilon<1$, define the
$\varepsilon$-conic neighborhood of $L$ (see \cite[Definition 4.2]{Kasimbeyli2010}) by
\(
L_\varepsilon
:=
\operatorname{cone}(B_L+\varepsilon B_X).
\)
Equivalently,
\(
L_\varepsilon
=
\bigcup_{u\in B_L}G(u,\varepsilon).
\)

The following result provides a finite outer approximation of a locally compact cone by cones from a directionally fine family, together with quantitative directional control.

\begin{theorem}\label{thm:finite-outer-approximation}
Let $\mathcal F$ be a directionally fine family of closed cones in $X$ and let
$L\subset X$ be a locally compact cone. For every $\varepsilon\in(0,1)$ there exist
$C_1,\ldots,C_m\in\mathcal F$ such that $D_L^\varepsilon:=\bigcup_{i=1}^m C_i$ is a closed cone and
\[
L\setminus\{0_X\}\subset \bigcup_{i=1}^m\operatorname{int}C_{i}\subset\operatorname{int}D_L^\varepsilon,\qquad
L\subset D_L^\varepsilon\subset L_\varepsilon,
\]
and $D_L^\varepsilon\cap S_X\subset\{x\in S_X:d(x,B_L)\leq 2\varepsilon\}.$
\end{theorem}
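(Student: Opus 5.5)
The plan is a compactness argument on the norm-base $B_L=L\cap S_X$, which is compact by Proposition~\ref{prop:local_compactness} (we may assume $L$ nontrivial; if $L=\{0_X\}$, any single member of $\cF$, or the empty union convention, handles the trivial case). For each $u\in B_L$, directional fineness (Definition~\ref{def:directionally_fine}) with parameter $\varepsilon$ yields $C_u\in\cF$ with $u\in\intt C_u\subset C_u\subset\cone B(u,\varepsilon)=G(u,\varepsilon)$. The sets $\intt C_u\cap S_X$ form a relatively open cover of $B_L$, so finitely many suffice: $B_L\subset\bigcup_{i=1}^m\intt C_i$ with $C_i:=C_{u_i}$.

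Next I would verify the inclusions. Since each $\intt C_i$ is an open cone minus possibly the origin (interiors of cones are invariant under positive scaling), every $x\in L\setminus\{0_X\}$ satisfies $x/\|x\|\in B_L\subset\bigcup\intt C_i$, hence $x\in\bigcup\intt C_i$; and a union of interiors lies in the interior of the union. Since $0_X\in C_i$, this gives $L\subset D_L^\varepsilon$. The set $D_L^\varepsilon$ is a finite union of closed cones, hence a closed cone. For the outer inclusion, $C_i\subset G(u_i,\varepsilon)$ with $u_i\in B_L$, and $L_\varepsilon=\bigcup_{u\in B_L}G(u,\varepsilon)$ as noted before the theorem, so $D_L^\varepsilon\subset L_\varepsilon$.

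Finally, the directional estimate follows from Proposition~\ref{prop:propiedades_epsilon_entornos_conicos_de_rayos}. If $x\in D_L^\varepsilon\cap S_X$, then $x\in C_i\cap S_X\subset G(u_i,\varepsilon)\cap S_X\subset B(u_i,2\varepsilon)$, so $d(x,B_L)\le\|x-u_i\|\le2\varepsilon$.

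The only step needing care is the first, specifically that the open cover must be by sets containing each $u$ in their interior, which is exactly what directional fineness provides, together with the scaling invariance of $\intt C_i$ used to pass from $B_L$ to all of $L\setminus\{0_X\}$. Otherwise the argument is routine.
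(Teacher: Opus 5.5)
Your proof is correct and follows essentially the same route as the paper's. You cover the compact norm-base $B_L$ by the interiors of the cones that directional fineness supplies, extract a finite subcover, and pass from $B_L$ to $L\setminus\{0_X\}$ by scaling invariance of $\intt C_i$. You then use $C_i\subset G(u_i,\varepsilon)\subset L_\varepsilon$ and get the $2\varepsilon$ bound from Proposition~\ref{prop:propiedades_epsilon_entornos_conicos_de_rayos}.

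Your aside on $L=\{0_X\}$ is not needed, because the paper's standing convention makes all cones proper. It is also not right as stated: in that case $L_\varepsilon=\cone(\varnothing)=\varnothing$, so no choice of $D_L^\varepsilon$ can satisfy $L\subset D_L^\varepsilon\subset L_\varepsilon$.
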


\begin{proof}
Fix $\varepsilon\in(0,1)$. For each $u\in B_L$, choose
$C_u\in\mathcal F$ such that $u\in\operatorname{int}C_u\subset C_u\subset G(u,\varepsilon) \subset L_\varepsilon$. Since $B_L$ is compact, there exist $u_1,\ldots,u_m\in B_L$ such that $B_L\subset\bigcup_{i=1}^m\operatorname{int}C_{u_i}$. Then, $L\setminus\{0_X\}\subset\bigcup_{i=1}^m\operatorname{int}C_{u_i}$. Set $C_i:=C_{u_i}$ and $D_L^\varepsilon:=\bigcup_{i=1}^m C_i$. Then $D_L^\varepsilon$ is a closed
cone and $D_L^\varepsilon\subset L_\varepsilon$. Moreover, if $x\in L\setminus\{0_X\}$,
then $x/\|x\|\in\operatorname{int}C_i$ for some $i$, and hence
$x\in\operatorname{int}C_i\subset\operatorname{int}D_L^\varepsilon$. Consequently,
$L\subset D_L^\varepsilon$. Finally, if $x\in D_L^\varepsilon\cap S_X$, then
$x\in C_i\cap S_X\subset G(u_i,\varepsilon)\cap S_X$ for some $i$.
Proposition~\ref{prop:propiedades_epsilon_entornos_conicos_de_rayos}
therefore gives $d(x,B_L)\leq\|x-u_i\|\leq2\varepsilon$, which completes the proof.
\end{proof}
The preceding finite outer approximation also yields a quantitative
Hausdorff estimate on the corresponding sets of directions and on bounded
sections.
\begin{corollary}
\label{cor:finite-hausdorff-approximation}
Under the assumptions of Theorem~\ref{thm:finite-outer-approximation},
for each $\varepsilon\in(0,1)$ one can choose a finite union
$D_L^\varepsilon$ of members of $\cF$ such that
\[
d_H(D_L^\varepsilon\cap S_X,B_L)\leq2\varepsilon,
\qquad
d_H(D_L^\varepsilon\cap RB_X,L\cap RB_X)\leq2R\varepsilon,
\quad \forall R>0.
\]
Consequently, $D_L^\varepsilon\cap S_X$ converges to $B_L$ and, for every
$R>0$, $D_L^\varepsilon\cap RB_X$ converges to $L\cap RB_X$ in Hausdorff
distance as $\varepsilon\downarrow0$.
\end{corollary}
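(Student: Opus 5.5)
We take $D_L^\varepsilon$ to be exactly the finite union produced by Theorem~\ref{thm:finite-outer-approximation}, so that $L\subset D_L^\varepsilon$ and $D_L^\varepsilon\cap S_X\subset\{x\in S_X:d(x,B_L)\le2\varepsilon\}$. We may assume $L$ nontrivial, so that $B_L\neq\varnothing$; the trivial cone can be discarded or treated by convention. First we check that the sets involved are closed and bounded, so that $d_H$ is defined: $D_L^\varepsilon$ is a closed cone, so $D_L^\varepsilon\cap S_X$ and $D_L^\varepsilon\cap RB_X$ are closed and bounded, while $B_L$ and $L\cap RB_X$ are compact by Proposition~\ref{prop:local_compactness}.

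\emph{Spherical estimate.} Since $L\subset D_L^\varepsilon$, we have $B_L\subset D_L^\varepsilon\cap S_X$, so $\sup_{b\in B_L}d(b,D_L^\varepsilon\cap S_X)=0$. The other half of the Hausdorff distance is bounded by $2\varepsilon$ directly by the last assertion of Theorem~\ref{thm:finite-outer-approximation}. This gives $d_H(D_L^\varepsilon\cap S_X,B_L)\le2\varepsilon$.

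\emph{Bounded sections.} Again $L\cap RB_X\subset D_L^\varepsilon\cap RB_X$, so only $\sup_{x\in D_L^\varepsilon\cap RB_X}d(x,L\cap RB_X)$ has to be controlled. Take $x\in D_L^\varepsilon\cap RB_X$. If $x=0_X$, then $x\in L$ and the distance is $0$. Otherwise write $x=\|x\|v$ with $v\in D_L^\varepsilon\cap S_X$. Since $B_L$ is compact, the spherical estimate gives some $b\in B_L$ with $\|v-b\|\le2\varepsilon$. Then $\|x\|b\in L$, $\|\|x\|b\|=\|x\|\le R$, and
\[
\|x-\|x\|b\|=\|x\|\,\|v-b\|\le 2R\varepsilon .
\]
This yields the bound $2R\varepsilon$. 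The only point requiring care is to rescale the approximating point by $\|x\|$, rather than projecting it onto $RB_X$, so that it stays in $L\cap RB_X$. Here positive homogeneity of $L$ is what makes this work, and attainment of $d(v,B_L)$ follows from compactness of $B_L$; alternatively one can use an approximate minimizer and let the slack tend to $0$.

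\emph{Convergence.} For each $\varepsilon$ fix such a $D_L^\varepsilon$. Both bounds tend to $0$ as $\varepsilon\downarrow0$, with $R$ fixed in the second, which is exactly the claimed Hausdorff convergence. The main (mild) obstacle is the second estimate, and the radial rescaling above resolves it.
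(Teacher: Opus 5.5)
Your proof is correct and follows essentially the same route as the paper. The spherical estimate comes from $B_L\subset D_L^\varepsilon\cap S_X$ together with the last assertion of Theorem~\ref{thm:finite-outer-approximation}. The bounded-section estimate comes from writing $x=\|x\|v$ and rescaling a point of $B_L$ within $2\varepsilon$ of $v$ by $\|x\|$, so that it stays in $L\cap RB_X$.
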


\begin{proof}
Choose $D_L^\varepsilon$ as in
Theorem~\ref{thm:finite-outer-approximation}. Since
$B_L\subset D_L^\varepsilon\cap S_X$ and
$d(x,B_L)\leq2\varepsilon$ for every
$x\in D_L^\varepsilon\cap S_X$, the first estimate follows.

Now let $0_X\neq x\in D_L^\varepsilon\cap RB_X$ and write
$x=tv$, where $t=\|x\|\leq R$ and
$v\in D_L^\varepsilon\cap S_X$. Choose $u\in B_L$ with
$\|v-u\|\leq2\varepsilon$. Then $tu\in L\cap RB_X$ and
$d(x,L\cap RB_X)\leq t\|v-u\|\leq2R\varepsilon$. Since $L\subset D_L^\varepsilon$, the second estimate follows.
\end{proof}

A finite separation conclusion could also be obtained directly from
Theorem~\ref{thm:finite-separation-directionally-compact}, since
Remark~\ref{rem:locall_cpto_and_directionally_cpto} shows that
$L\setminus\{0_X\}$ is directionally compact. The preceding approximation
results yield, however, a stronger geometric statement: the separating
finite union can be chosen inside any sufficiently small prescribed conic
neighborhood $L_\varepsilon$ of $L$. In addition,
Corollary~\ref{cor:finite-hausdorff-approximation} provides quantitative
Hausdorff control of the corresponding directions and bounded sections.

\begin{theorem}
\label{thm:two-cones-finite-separation}
Let $\cF$ be a directionally fine family of closed cones, and let
$L,K\subset X$ be cones such that $L\cap K=\{0_X\}$, with $K$ closed
and $L$ locally compact. Set $d:=d(B_L,K)$. Then $d>0$, and for every
$0<\varepsilon<\min\{1,d\}$ there exists a closed cone $D_L^\varepsilon$, which is a
finite union of members of $\cF$, such that
\[
L\setminus\{0_X\} \subset  \operatorname{int}D_L^\varepsilon,\quad
L\subset D_L^\varepsilon\subset L_\varepsilon,\qquad
D_L^\varepsilon\cap K=\{0_X\}.
\]
\end{theorem}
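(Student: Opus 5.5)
The plan is to first establish $d>0$ by a compactness argument, and then reuse the construction in the proof of Theorem~\ref{thm:finite-outer-approximation}, checking that the conic neighborhood $L_\varepsilon$ meets $K$ only at the origin once $\varepsilon<d$.

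\emph{Positivity of $d$.} If $L=\{0_X\}$, then $B_L$ is empty and there is nothing to separate. I will therefore treat $L$ as nontrivial, in which case $B_L$ is nonempty. By Proposition~\ref{prop:local_compactness}, $B_L$ is compact, and by Remark~\ref{rem:locally_compact_cone_closed}, $L$ is closed. Since $L\cap K=\{0_X\}$ and $0_X\notin B_L$, we get $B_L\cap K=\varnothing$. The function $u\mapsto d(u,K)$ is continuous and strictly positive on $B_L$, because $K$ is closed. It therefore attains a positive minimum on the compact set $B_L$, which gives $d=d(B_L,K)>0$.

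\emph{Construction.} Fix $0<\varepsilon<\min\{1,d\}$ and apply Theorem~\ref{thm:finite-outer-approximation} with this $\varepsilon$. This produces $C_1,\dots,C_m\in\cF$ such that $D_L^\varepsilon=\bigcup_i C_i$ is a closed cone with $L\setminus\{0_X\}\subset\intt D_L^\varepsilon$ and $L\subset D_L^\varepsilon\subset L_\varepsilon$. It then suffices to prove $L_\varepsilon\cap K=\{0_X\}$.

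\emph{The separation step.} This is the only genuinely new step, although it is mild. Write $L_\varepsilon=\bigcup_{u\in B_L}G(u,\varepsilon)$ with $G(u,\varepsilon)=\cone B(u,\varepsilon)$. For each $u\in B_L$ we have $\varepsilon<d\le d(u,K)$, so $B(u,\varepsilon)\cap K=\varnothing$. Repeating the argument in the proof of Proposition~\ref{prop:closed_cone_point}, suppose $0_X\ne k=ty\in K$ with $t>0$ and $y\in B(u,\varepsilon)$. Then $y=t^{-1}k\in K$, which is a contradiction. Hence $G(u,\varepsilon)\cap K=\{0_X\}$ for every $u\in B_L$. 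Taking the union over $u\in B_L$ gives $L_\varepsilon\cap K=\{0_X\}$, and therefore $D_L^\varepsilon\cap K=\{0_X\}$, since $0_X$ belongs to both sets.

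The main point to check is that the choice $\varepsilon<d$ yields the pointwise bound $\varepsilon<d(u,K)$ uniformly over $B_L$. This uniformity is exactly what positivity of the infimum $d$ provides. The remaining properties are inherited directly from Theorem~\ref{thm:finite-outer-approximation}.
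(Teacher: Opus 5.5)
Your proposal is correct and follows essentially the same route as the paper: $d>0$ comes from compactness of $B_L$ and closedness of $K$, and $\varepsilon<d$ forces $L_\varepsilon\cap K=\{0_X\}$. Then $D_L^\varepsilon\cap K=\{0_X\}$ follows from the inclusion $D_L^\varepsilon\subset L_\varepsilon$ supplied by Theorem~\ref{thm:finite-outer-approximation}; the only cosmetic difference is that the paper writes a point of $L_\varepsilon\cap K$ directly as $t(u+\varepsilon b)$ instead of passing through the cones $G(u,\varepsilon)$.
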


\begin{proof}
Since $L\cap K=\{0_X\}$, one has $B_L\cap K=\varnothing$. As $B_L$ is
compact and $K$ is closed, $d:=d(B_L,K)>0$. Fix
$0<\varepsilon<\min\{1,d\}$. Then $L_\varepsilon\cap K=\{0_X\}$:
indeed, if $0_X\neq x=t(u+\varepsilon b)\in L_\varepsilon\cap K$,
with $t>0$, $u\in B_L$ and $b\in B_X$, then
$u+\varepsilon b=x/t\in K$, and hence $d(u,K)\leq\varepsilon<d$, a
contradiction.

By Theorem~\ref{thm:finite-outer-approximation}, there exists a closed
cone $D_L^\varepsilon$, which is a finite union
$D_L^\varepsilon=\bigcup_{i=1}^m C_i$ of members of $\cF$, such that
$L\setminus\{0_X\}\subset\operatorname{int}D_L^\varepsilon$ and
$L\subset D_L^\varepsilon\subset L_\varepsilon$. Moreover, its construction yields $B_L\subset\bigcup_{i=1}^m\operatorname{int}C_i$. Hence
$D_L^\varepsilon\cap K\subset L_\varepsilon\cap K=\{0_X\}$.
\end{proof}

The finite family underlying the cone $D_L^\varepsilon$, in the preceding result, can be compressed into a single continuous positively homogeneous separator.

\begin{corollary}
\label{cor:max-two-cones}
Under the assumptions of Theorem~\ref{thm:two-cones-finite-separation},
there exist a continuous positively homogeneous function
$\Phi:X\to\mathbb R$ and $\gamma_L>0$ such that $|\Phi(x)|\leq\|x\|$ for every $x\in X$ and
\[
\Phi(l)\geq\gamma_L\|l\|>0>\Phi(k),
\quad \forall l\in L\setminus\{0_X\},\ \forall k\in K\setminus\{0_X\}.
\]
\end{corollary}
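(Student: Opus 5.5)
We take the finite union $D_L^\varepsilon=\bigcup_{i=1}^m C_i$ from Theorem~\ref{thm:two-cones-finite-separation} and compress it into one function. The construction in Theorem~\ref{thm:finite-outer-approximation} supplies directions $u_1,\ldots,u_m\in B_L$ with $u_i\in\intt C_i$ and $B_L\subset\bigcup_i\intt C_i$. For each $i$, Lemma~\ref{lem:adapted_defining_function} gives a continuous positively homogeneous $\varphi_i:=\varphi_{C_i,u_i}$ with
\[
C_i=\{\varphi_i\ge0\},\qquad \intt C_i=\{\varphi_i>0\},\qquad |\varphi_i(x)|\le\|x\|.
\]
We use the lemma rather than Proposition~\ref{prop:superlinear_adapted} because we need the two-sided bound $|\varphi_i|\le\|\cdot\|$. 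We then set $\Phi:=\max_{1\le i\le m}\varphi_i$.

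Continuity and positive homogeneity of $\Phi$ are inherited from the $\varphi_i$. The bound $|\Phi(x)|\le\|x\|$ holds because every $\varphi_i(x)$ lies in $[-\|x\|,\|x\|]$, so their maximum does too.

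Next we check the sign on $K$. Let $k\in K\setminus\{0_X\}$. Since $D_L^\varepsilon\cap K=\{0_X\}$, the point $k$ lies in no $C_i$. Hence $\varphi_i(k)<0$ for every $i$, and so $\Phi(k)<0$.

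The main step is the uniform margin on $L$. Each $x\in B_L$ lies in some $\intt C_i$, so $\varphi_i(x)>0$ and therefore $\Phi(x)>0$ on $B_L$. Now $\Phi$ is continuous and $B_L$ is compact by Proposition~\ref{prop:local_compactness}. So $\Phi$ attains a minimum on $B_L$, and we set $\gamma_L:=\min_{B_L}\Phi>0$. For $l\in L\setminus\{0_X\}$, the point $l/\|l\|$ lies in $B_L$, so positive homogeneity gives $\Phi(l)=\|l\|\,\Phi(l/\|l\|)\ge\gamma_L\|l\|>0$.

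There is an alternative that avoids the minimum-over-a-compact argument. One would take the finite subcover of $B_L$ by the sets $\{\varphi_i>1/2\}$, each a neighbourhood of $u_i$, which yields $\gamma_L=1/2$ directly. This route requires choosing the cover with these sets from the start, so the compactness argument above is the simpler one.
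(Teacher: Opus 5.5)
Your proposal is correct and follows the paper's proof essentially step for step. Both take the adapted defining functions from Lemma~\ref{lem:adapted_defining_function} at directions $u_i\in B_L\cap\intt C_i$, set $\Phi:=\max_i\varphi_i$, obtain $\Phi<0$ on $K\setminus\{0_X\}$ from $D_L^\varepsilon\cap K=\{0_X\}$, and define $\gamma_L:=\min_{B_L}\Phi>0$ using compactness of $B_L$ and positive homogeneity.
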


\begin{proof}
Take \(D_L^\varepsilon=\bigcup_{i=1}^m C_i\) as in
Theorem~\ref{thm:two-cones-finite-separation}, where
$u_i\in B_L\cap\operatorname{int}C_i$. For each $i$, let $\varphi_i$ be an adapted defining function for $C_i$
at $u_i$ provided by Lemma~\ref{lem:adapted_defining_function}. Then
\(
C_i=\{\varphi_i\ge0\},\quad
\operatorname{int}C_i=\{\varphi_i>0\},
\quad
|\varphi_i(x)|\le\|x\|,
\quad \forall x\in X.
\)
Set
$\Phi:=\max_{1\leq i\leq m}\varphi_i$. Then $\Phi$ is continuous and
positively homogeneous, $D_L^\varepsilon=\{\Phi\geq0\}$, and
$|\Phi(x)|\leq\|x\|$. Since
$L\setminus\{0_X\}\subset\bigcup_{i=1}^m\operatorname{int}C_i$,
one has $\Phi>0$ on $L\setminus\{0_X\}$. Moreover,
$D_L^\varepsilon\cap K=\{0_X\}$ implies $\varphi_i(k)<0$ for every $i$
and every $k\in K\setminus\{0_X\}$, and hence $\Phi(k)<0$.

Finally, $B_L$ is compact, so
$\gamma_L:=\min_{u\in B_L}\Phi(u)>0$. For
$l\in L\setminus\{0_X\}$, positive homogeneity gives
$\Phi(l)=\|l\|\Phi(l/\|l\|)\geq\gamma_L\|l\|$.
\end{proof}

If both cones are locally compact, the preceding results yield a
symmetric finite separation.

\begin{theorem}
\label{thm:symmetric-finite-separation}
Let $\mathcal F$ be a directionally fine family of closed cones, and let
$L,K\subset X$ be locally compact cones such that
$L\cap K=\{0_X\}$. Set $d:=d(B_L,B_K)>0$. Then, for every $0<\varepsilon<\min\{1,d/4\}$, there exist closed cones
$D_L^\varepsilon,D_K^\varepsilon$, each a finite union of members of
$\mathcal F$, such that
\[
L\setminus\{0_X\}\subset\intt D_L^\varepsilon,\qquad
K\setminus\{0_X\}\subset\intt D_K^\varepsilon,\qquad
D_L^\varepsilon\cap D_K^\varepsilon=\{0_X\},
\]
with
$L\subset D_L^\varepsilon\subset L_\varepsilon$ and
$K\subset D_K^\varepsilon\subset K_\varepsilon$.
Moreover,
$D_L^\varepsilon\cap S_X
 \subset\{x\in S_X:d(x,B_L)\leq2\varepsilon\}$
and
$D_K^\varepsilon\cap S_X
 \subset\{x\in S_X:d(x,B_K)\leq2\varepsilon\}$.

In addition, there exist a continuous positively homogeneous function
$\Phi:X\to\R$ and constants $\gamma_L,\gamma_K>0$ such that
$|\Phi(x)|\leq\|x\|$ for every $x\in X$ and
\[
\Phi(l)\geq\gamma_L\|l\|,
\qquad
\Phi(k)\leq-\gamma_K\|k\|,
\qquad \forall l\in L,\quad \forall k\in K.
\]
\end{theorem}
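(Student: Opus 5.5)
Since $B_L$ and $B_K$ are compact and disjoint (because $L\cap K=\{0_X\}$), $d=d(B_L,B_K)>0$. Fix $0<\varepsilon<\min\{1,d/4\}$ and apply Theorem~\ref{thm:finite-outer-approximation} to $L$ and to $K$ separately with this $\varepsilon$. This gives finite unions $D_L^\varepsilon=\bigcup_i C_i$ and $D_K^\varepsilon=\bigcup_j E_j$ of members of $\cF$ with all the stated inclusions, the interior covering properties, and the spherical estimates $d(x,B_L)\le2\varepsilon$ on $D_L^\varepsilon\cap S_X$ and $d(x,B_K)\le2\varepsilon$ on $D_K^\varepsilon\cap S_X$. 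The only new geometric claim is disjointness. If $x\in D_L^\varepsilon\cap D_K^\varepsilon\cap S_X$, pick $u\in B_L$ and $v\in B_K$ with $\|x-u\|\le2\varepsilon$ and $\|x-v\|\le2\varepsilon$ (attained by compactness). Then $d\le\|u-v\|\le4\varepsilon<d$, a contradiction. Since both sets are cones, $D_L^\varepsilon\cap D_K^\varepsilon=\{0_X\}$.

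For the separator, I take adapted defining functions $\varphi_i$ for $C_i$ at $u_i\in B_L$ and $\psi_j$ for $E_j$ at $v_j\in B_K$ from Lemma~\ref{lem:adapted_defining_function}, so that $|\varphi_i|,|\psi_j|\le\|\cdot\|$. Set $\Phi_L:=\max_i\varphi_i$ and $\Phi_K:=\max_j\psi_j$, as in Corollary~\ref{cor:max-two-cones}. Then $\{\Phi_L\ge0\}=D_L^\varepsilon$, $\Phi_L>0$ on $L\setminus\{0_X\}$, and the same holds for $K$. Define
\[
\Phi:=\tfrac12\bigl(\Phi_L-\Phi_K\bigr).
\]
It is continuous and positively homogeneous, and $|\Phi(x)|\le\tfrac12(\|x\|+\|x\|)=\|x\|$.

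Now take $l\in L\setminus\{0_X\}$. Then $\Phi_L(l)>0$, and since $l\in D_L^\varepsilon\setminus\{0_X\}$ lies outside $D_K^\varepsilon$, also $\Phi_K(l)<0$. Hence $\Phi(l)>0$. Symmetrically, $\Phi(k)<0$ for $k\in K\setminus\{0_X\}$. Compactness of $B_L$ and $B_K$ together with continuity gives
\[
\gamma_L:=\min_{B_L}\Phi>0,\qquad \gamma_K:=\min_{B_K}(-\Phi)>0.
\]
Positive homogeneity then yields $\Phi(l)\ge\gamma_L\|l\|$ and $\Phi(k)\le-\gamma_K\|k\|$, and both inequalities hold trivially at $0_X$.

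The main point to check is the strict sign of $\Phi_K$ on $L\setminus\{0_X\}$. This rests on $D_K^\varepsilon=\{\Phi_K\ge0\}$ exactly, which holds because each $E_j=\{\psi_j\ge0\}$. Everything else is routine assembly of Theorem~\ref{thm:finite-outer-approximation} and Corollary~\ref{cor:max-two-cones}. The factor $\tfrac12$ is what secures the two-sided bound $|\Phi|\le\|\cdot\|$; this uses the two-sided estimate of Lemma~\ref{lem:adapted_defining_function}, not merely adaptedness.
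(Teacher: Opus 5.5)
Your proof is correct. The first half coincides with the paper's: you construct $D_L^\varepsilon$ and $D_K^\varepsilon$ via Theorem~\ref{thm:finite-outer-approximation}, then prove disjointness through $d\le\|u-v\|\le4\varepsilon<d$ on $S_X$ followed by scaling.

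The separator is where your route differs. The paper does not use the cover of $K$ to build $\Phi$. It invokes Corollary~\ref{cor:max-two-cones}, i.e.\ the one-sided max-type function $\Phi=\max_i\varphi_i$ attached to a finite cover of $L$ whose union meets the closed cone $K$ only at $0_X$. That corollary already gives $\Phi(l)\ge\gamma_L\|l\|$, $\Phi<0$ on $K\setminus\{0_X\}$ and $|\Phi|\le\|\cdot\|$. The paper then obtains $\gamma_K:=-\max_{B_K}\Phi>0$ from compactness of $B_K$. Your symmetric function $\tfrac12(\Phi_L-\Phi_K)$ instead combines both covers and exploits the disjointness $D_L^\varepsilon\cap D_K^\varepsilon=\{0_X\}$ you have just proved.

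The paper's route is shorter and reuses an existing result. On the $K$ side it needs only closedness of $K$, plus compactness of $B_K$ for the margin. It also keeps $\{\Phi\ge0\}$ equal to a finite union of members of $\cF$, so the separator still encodes an explicit approximating cone.

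Your route ties $\Phi$ to the very cones $D_L^\varepsilon$, $D_K^\varepsilon$ of the first part and treats $L$ and $K$ on an equal footing. Your argument in fact gives more: $\Phi>0$ on all of $D_L^\varepsilon\setminus\{0_X\}$ and $\Phi<0$ on all of $D_K^\varepsilon\setminus\{0_X\}$, not just on $L$ and $K$. The price is that you lose the identification of $\{\Phi\ge0\}=\{\Phi_L\ge\Phi_K\}$ with a finite union of members of $\cF$.

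Your remark that the factor $\tfrac12$ relies on the two-sided bound $|\varphi_i|\le\|\cdot\|$ of Lemma~\ref{lem:adapted_defining_function}, and not merely on adaptedness, is accurate. Likewise, $\Phi_L>0$ on $L\setminus\{0_X\}$ rests on the identity $\intt C_i=\{\varphi_i>0\}$ from the same lemma.
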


\begin{proof}
Since $B_L$ and $B_K$ are compact and disjoint, let
$d:=d(B_L,B_K)>0$ and fix
$0<\varepsilon<\min\{1,d/4\}$. By
Theorem~\ref{thm:finite-outer-approximation}, there exist closed finite
unions $D_L^\varepsilon,D_K^\varepsilon$ satisfying the stated
interior and approximation properties. If
$0_X\neq x\in D_L^\varepsilon\cap D_K^\varepsilon$ and
$v:=x/\|x\|$, there exist $u\in B_L$ and $w\in B_K$ such that
$\|v-u\|\leq2\varepsilon$ and $\|v-w\|\leq2\varepsilon$. Hence
$d\leq\|u-w\|\leq\|u-v\|+\|v-w\|\leq4\varepsilon<d$, a contradiction.

By Corollary~\ref{cor:max-two-cones}, there exist a continuous
positively homogeneous function $\Phi$ and $\gamma_L>0$ such that
$\Phi(l)\geq\gamma_L\|l\|$ for every $l\in L$,
$\Phi<0$ on $K\setminus\{0_X\}$, and $|\Phi(x)|\leq\|x\|$.
Since $B_K$ is compact,
$\gamma_K:=-\max_{v\in B_K}\Phi(v)>0$. Thus, for
$k\in K\setminus\{0_X\}$,
$\Phi(k)=\|k\|\Phi(k/\|k\|)\leq-\gamma_K\|k\|$; for $k=0_X$ the
inequality is immediate.
\end{proof}

\begin{remark}\label{rem:finite-dimensional-cones}
If $X$ is finite-dimensional, every closed cone $L\subset X$ is locally
compact. Indeed, $B_L=L\cap S_X$ is a closed subset of the compact set
$S_X$, and hence is compact. Consequently, all the approximation and
finite-separation results of this subsection apply to arbitrary closed
cones in finite-dimensional spaces, without any additional compactness
assumption. In particular,
Theorems~\ref{thm:finite-outer-approximation},
\ref{thm:two-cones-finite-separation}, and
\ref{thm:symmetric-finite-separation}, together with Corollaries \ref{cor:finite-hausdorff-approximation} and \ref{cor:max-two-cones}, hold for closed cones under their respective disjointness assumptions.
\end{remark}

The finite character of our construction is essential. Even
in the plane, there are pairs of cones that can be separated by a
finite union of narrow cones, but cannot be separated by any single
convex cone.
\begin{example}\label{ex:no-single-convex-cone}
Let $X=\mathbb R^2$ with any norm, $e_1=(1,0)$, and
$u_\pm=(\cos\theta,\pm\sin\theta)$ for some
$0<\theta<\pi/2$. Set $L:=\mathbb R_+u_+\cup\mathbb R_+u_-$ and
$K:=\mathbb R_+e_1$. Then $L\cap K=\{0_X\}$, but no convex cone $C$
can satisfy $L\subset C$ and $C\cap K=\{0_X\}$. Indeed, if $u_\pm\in C$, then
$u_++u_-=2\cos\theta\,e_1\in C$, hence $K\subset C$.

In contrast, if $\mathcal F$ is directionally fine, one can choose
$C_\pm\in\mathcal F$ sufficiently narrow around the directions
$u_\pm/\|u_\pm\|$ so that $L\setminus\{0_X\}\subset\operatorname{int}(C_+\cup C_-)$ and $(C_+\cup C_-)\cap K=\{0_X\}$.

If $\varphi_\pm$ are adapted defining functions for $C_\pm$, then
$\Phi:=\max\{\varphi_+,\varphi_-\}$ satisfies
$\Phi(l)\geq\gamma_L\|l\|$ and
$\Phi(k)\leq-\gamma_K\|k\|$ for some $\gamma_L,\gamma_K>0$ and all
$l\in L$, $k\in K$.
\end{example}

\begin{remark}
\label{rem:finite-family-essential}
Example~\ref{ex:no-single-convex-cone} shows that the passage from a
single separator to a finite family is not merely a technical artifact
of the compactness argument.

First, finite unions genuinely enlarge the class of pairs of cones
that can be separated. Second, the max-type scalarization
\[
\Phi=\max_{1\leq i\leq m}\varphi_i,
\]
is therefore intrinsic to the construction. Even when each individual
$\varphi_i$ is superlinear, their maximum need not be superlinear.
Thus the possible loss of superlinearity at the global level reflects
the geometry of the problem rather than a weakness of the proof.

This also distinguishes the present approach from separation results
based on a single Bishop--Phelps cone, where the separating cone is
convex and the relevant hypotheses are naturally expressed in terms of
convexifications of norm-bases and suitable strict separation
properties; see, for instance, \cite{Kasimbeyli2010,GarciaCastanoEtAl2025}.
\end{remark}

\section{The Bishop--Phelps Family: Directional Fineness and Max-Type Separation}
The aim of this section is to determine when the classical family of Bishop--Phelps cones fits the abstract framework developed above. We show that this happens precisely when every point of the unit sphere is a denting point of the unit ball. In Banach spaces, this condition is equivalently characterized by rotundity together with the Kadec property. We then record the corresponding finite separation and approximation consequences as direct applications of the results of Section~\ref{sec:separation-approximation}.

We now consider the classical family of Bishop--Phelps cones. For
$f\in S_{X^*}$ and $0<\alpha<1$, set $C(f,\alpha)
:=
\{x\in X:f(x)\geq \alpha\|x\|\}$, and define
\[
\mathcal F_{\mathrm{BP}}
:=
\{C(f,\alpha):f\in S_{X^*},\ 0<\alpha<1\}.
\]

These classical Bishop--Phelps cones go back to Phelps and have been
widely used in vector optimization and nonlinear separation; see, e.g.,
\cite{Phelps1974,Jahn2009,Kasimbeyli2010,HaJahn2017}. Every member of $\mathcal F_{\mathrm{BP}}$ is a closed convex cone. However, $\mathcal F_{\mathrm{BP}}$
need not be directionally fine for an arbitrary norm.

Recall that $u\in B_X$ is a denting point of $B_X$ if, for every
$\varepsilon>0$, there exist $f\in S_{X^*}$ and $\gamma>0$ such that
$u\in S(B_X,f,\gamma)$ and $\operatorname{diam}S(B_X,f,\gamma)<\varepsilon$,
where $S(B_X,f,\gamma):=\{x\in B_X:f(x)>1-\gamma\}$ is the corresponding slice of $B_X$; see, for instance, \cite{LinLinTroyanski1988,GarciaCastanoMelguizoMontesinos2015}. Denting points also occur naturally in density results for proper minimal points; see, e.g., \cite{Gong1995}.

The following result identifies the precise geometric condition under which
the family of Bishop--Phelps cones is directionally fine.
\begin{theorem}
\label{thm:BP-directionally-fine}
The family $\mathcal F_{\mathrm{BP}}$ is directionally fine if and only if
every point of $S_X$ is a denting point of $B_X$.
Consequently, whenever this condition holds,
$\mathcal F_{\mathrm{BP}}$ is a convex directionally fine family.
\end{theorem}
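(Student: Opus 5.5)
The plan is to compare spherical sections of Bishop--Phelps cones with slices of $B_X$. Two elementary facts drive everything. First, for $f\in S_{X^*}$ and $0<\alpha<1$, a vector $x\neq0_X$ lies in $C(f,\alpha)$ if and only if $x/\|x\|$ lies in $\{v\in S_X: f(v)\ge\alpha\}$. Second, $\{x: f(x)>\alpha\|x\|\}$ is open, so $f(u)>\alpha$ gives $u\in\intt C(f,\alpha)$. For the converse direction of the theorem, only the inclusion $C(f,\alpha)\subset\cone B(u,\varepsilon)$ will be needed, together with $u\in C(f,\alpha)$. The final ``consequently'' clause needs no work: every $C(f,\alpha)$ is convex, so Definition~\ref{def:directionally_fine} immediately gives convex directional fineness.

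\emph{Denting $\Rightarrow$ directionally fine.} Fix $u\in S_X$ and $\varepsilon\in(0,1)$. Since $u$ is a denting point, there are $f\in S_{X^*}$ and $\gamma>0$ with $u\in S(B_X,f,\gamma)$ and $\operatorname{diam}S(B_X,f,\gamma)<\varepsilon$. Thus $1-\gamma<f(u)\le1$. Choose $\alpha$ with $\max\{1-\gamma,0\}<\alpha<f(u)$; this forces $0<\alpha<1$. Then $f(u)>\alpha=\alpha\|u\|$, so $u\in\intt C(f,\alpha)$. Now let $0_X\neq x\in C(f,\alpha)$ and put $v:=x/\|x\|$. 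Then $f(v)\ge\alpha>1-\gamma$, so $v\in S(B_X,f,\gamma)$. Hence $\|v-u\|<\varepsilon$, which gives $x=\|x\|v\in\cone B(u,\varepsilon)$. Therefore $u\in\intt C(f,\alpha)\subset C(f,\alpha)\subset\cone B(u,\varepsilon)$.

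\emph{Directionally fine $\Rightarrow$ denting.} Fix $u\in S_X$ and $\varepsilon>0$, and set $\varepsilon':=\min\{\varepsilon,1\}/20$. By hypothesis there is $C(f,\alpha)$ with $u\in\intt C(f,\alpha)\subset C(f,\alpha)\subset\cone B(u,\varepsilon')=G(u,\varepsilon')$. Proposition~\ref{prop:propiedades_epsilon_entornos_conicos_de_rayos} then gives
\[
C(f,\alpha)\cap S_X\subset B(u,2\varepsilon').
\]
The main obstacle is that directional fineness yields $f(u)\ge\alpha$, but the slice definition needs $f(u)$ close to $\sup_{B_X}f=1$. I plan to get this from the sup itself. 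Since $\|f\|=1>\alpha$, for each $\eta>0$ there is $v\in S_X$ with $f(v)>\max\{\alpha,1-\eta\}$. Such a $v$ lies in $C(f,\alpha)\cap S_X$, hence $\|v-u\|\le2\varepsilon'$. It follows that $f(u)\ge f(v)-2\varepsilon'>1-\eta-2\varepsilon'$. Letting $\eta\downarrow0$ gives $f(u)\ge1-2\varepsilon'$.

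Next choose $\gamma$ with $1-f(u)<\gamma\le\min\{1-\alpha,3\varepsilon'\}$. This interval is nonempty because $1-f(u)\le2\varepsilon'<3\varepsilon'$ and $f(u)\ge\alpha$. The left inequality gives $u\in S(B_X,f,\gamma)$. Now take $x\in S(B_X,f,\gamma)$. Then $\|x\|\ge f(x)>1-\gamma$, and also $f(x)>1-\gamma\ge\alpha\ge\alpha\|x\|$, so $x\in C(f,\alpha)$. Hence $\|x/\|x\|-u\|\le2\varepsilon'$, and moreover $\|x-x/\|x\|\|=1-\|x\|<\gamma\le3\varepsilon'$. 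Combining, $\|x-u\|<5\varepsilon'$ for every $x$ in the slice. So $\operatorname{diam}S(B_X,f,\gamma)\le10\varepsilon'<\varepsilon$, and $u$ is a denting point of $B_X$.
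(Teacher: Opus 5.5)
Your proof follows the paper's argument essentially step by step. In the forward direction you make the same choice $\max\{1-\gamma,0\}<\alpha<f(u)$. In the converse, your slice $\{x\in B_X:f(x)>1-\gamma\}$ is the paper's $S_\beta$ with $\beta=1-\gamma$, and the $2\varepsilon'$, $3\varepsilon'$, $5\varepsilon'$, $10\varepsilon'$ estimates are the same. However, two existence claims are not properly justified.

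\textbf{Converse direction.} The interval $(1-f(u),\min\{1-\alpha,3\varepsilon'\}]$ is nonempty only if $f(u)>\alpha$ strictly. The reason you give, $f(u)\ge\alpha$, does not suffice. If $f(u)=\alpha$, two requirements clash: $1-\gamma<f(u)$, so that $u$ lies in the slice, and $1-\gamma\ge\alpha$, which is how you place slice points in $C(f,\alpha)$. So your opening remark that only $u\in C(f,\alpha)$ is needed is incorrect. You must use $u\in\operatorname{int}C(f,\alpha)$, which does force $f(u)>\alpha$. Indeed, suppose $f(u)=\alpha$ and pick $w\in S_X$ with $f(w)>\alpha$. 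For $t\in(0,1)$ one has $\|u-tw\|\ge1-t$, hence
\[
f(u-tw)-\alpha\|u-tw\|\le\alpha-tf(w)-\alpha(1-t)=t\bigl(\alpha-f(w)\bigr)<0.
\]
Thus $u-tw\notin C(f,\alpha)$ for every small $t>0$, contradicting interiority. The paper relies on the same strict inequality, implicitly, when it chooses $\max\{\alpha_\delta,1-3\delta\}<\beta<g_\delta(u)$.

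\textbf{Forward direction.} The choice of $\alpha$ also needs $f(u)>0$. This does not follow from $1-\gamma<f(u)$ when $\gamma\ge1$. The missing step is the one the paper records. Since $1-\gamma<1=\sup_{B_X}f$, the slice contains points with $f$-values arbitrarily close to $1$. Each such point lies within $\operatorname{diam}S(B_X,f,\gamma)$ of $u$. Hence
\[
f(u)\ge1-\operatorname{diam}S(B_X,f,\gamma)>1-\varepsilon>0.
\]
The paper even takes $\operatorname{diam}S<\min\{\varepsilon,1/2\}$ to get $f(u)>1/2$. This is the same sup-approximation argument you already use in the converse.

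With these two repairs your argument is complete.
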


\begin{proof}
Assume first that every $u\in S_X$ is a denting point of $B_X$.
Fix $u\in S_X$ and $\varepsilon\in(0,1)$, and choose
$g_\varepsilon\in S_{X^*}$ and $\gamma>0$ such that
$u\in S:=\{x\in B_X:g_\varepsilon(x)>1-\gamma\}$ and
$\operatorname{diam}S<\min\{\varepsilon,1/2\}$. Since
$\sup_S g_\varepsilon=1$, one has
$g_\varepsilon(u)\geq1-\operatorname{diam}S>1/2$. Choose
$\max\{0,1-\gamma\}<\alpha_\varepsilon<g_\varepsilon(u)$.
Then
$u\in\operatorname{int}C(g_\varepsilon,\alpha_\varepsilon)$ and
$C(g_\varepsilon,\alpha_\varepsilon)\cap S_X
\subset S\subset B(u,\varepsilon)$, whence
$C(g_\varepsilon,\alpha_\varepsilon)
\subset\operatorname{cone}B(u,\varepsilon)$. Thus
$\mathcal F_{\mathrm{BP}}$ is directionally fine.

Conversely, assume that $\mathcal F_{\mathrm{BP}}$ is directionally
fine. Fix $u\in S_X$ and $\eta>0$, and choose
$0<\delta<\min\{1,\eta/10\}$. There exist
$g_\delta\in S_{X^*}$ and $\alpha_\delta\in(0,1)$ such that
$u\in\operatorname{int}C(g_\delta,\alpha_\delta)
\subset C(g_\delta,\alpha_\delta)\subset G(u,\delta)$. Hence, by
Proposition~\ref{prop:propiedades_epsilon_entornos_conicos_de_rayos},
$C(g_\delta,\alpha_\delta)\cap S_X\subset B(u,2\delta)$.
Approximating the supremum of $g_\delta$ on $S_X$ gives
$g_\delta(u)\geq1-2\delta$. Choose
$\max\{\alpha_\delta,1-3\delta\}<\beta<g_\delta(u)$ and set
$S_\beta:=\{x\in B_X:g_\delta(x)>\beta\}$.

If $x\in S_\beta$, then $x\in C(g_\delta,\alpha_\delta)$,
$\|x\|>1-3\delta$, and
$x/\|x\|\in C(g_\delta,\alpha_\delta)\cap S_X$. Therefore $\|x-u\|
\leq 1-\|x\|
   +\left\|\frac{x}{\|x\|}-u\right\|
<5\delta$. Thus $S_\beta$ is a slice containing $u$ with
$\operatorname{diam}S_\beta\leq10\delta<\eta$, so $u$ is a denting
point of $B_X$.
\end{proof}

\begin{remark}
\label{rem:BP-property-G}
The geometric condition in
Theorem~\ref{thm:BP-directionally-fine} is the classical
property~$(G)$ introduced by Fan and Glicksberg
\cite{FanGlicksberg1958}: every point of $S_X$ is a denting point of
$B_X$. Hence, for an arbitrary normed space, $\mathcal F_{\mathrm{BP}}$ is
directionally fine if and only if $X$ has property~(G). Every locally uniformly rotund (LUR) normed space has property~$(G)$. Consequently, the
Bishop--Phelps family is directionally fine in every uniformly convex
space and, in particular, in Hilbert spaces and in the classical spaces
$\ell^p$ and $L^p(\mu)$, $1<p<\infty$, endowed with their usual norms;
see, e.g., \cite{Megginson1998,Clarkson1936}.

In the Banach-space setting, property~$(G)$ is equivalent to $X$ being
rotund (strictly convex) and having the Kadec property, \cite{LinLinTroyanski1986}. Thus the geometric condition appearing here
is closely related to the Kadec property used in
Section~\ref{sec:separation-approximation} to pass from weak directional
compactness to directional compactness.

Finally, in finite-dimensional normed spaces the Kadec property is
automatic. Therefore, in finite-dimensional normed spaces,
$\mathcal F_{\mathrm{BP}}$ is directionally fine if and only if $X$ is
strictly convex.
\end{remark}

Combining the preceding characterization of directionally fine
Bishop--Phelps families with the max-type separation result yields the
following explicit scalarization.
\begin{corollary}\label{cor:explicit-BP-max-scalarization}
Assume that every point of $S_X$ is a denting point of $B_X$.
Let $K\subset X$ be a closed cone and let
$A\subset X\setminus\{0_X\}$ be directionally compact with
$A\cap K=\varnothing$. Then there exist $m\in\mathbb N$, $g_1,\ldots,g_m\in S_{X^*}$ and
$\alpha_1,\ldots,\alpha_m\in(0,1)$ such that, setting
$C_i:=C(g_i,\alpha_i)=\{x:g_i(x)\geq\alpha_i\|x\|\}$ and
$D:=\bigcup_{i=1}^mC_i$, one has
$A\subset\bigcup_{i=1}^m\operatorname{int}C_i\subset\operatorname{int}D$
and $D\cap K=\{0_X\}$.

Moreover, the function
\[
\Phi_{\mathrm{BP}}(x)
:=\frac12\max_{1\leq i\leq m}
\bigl(g_i(x)-\alpha_i\|x\|\bigr),
\]
is continuous and positively homogeneous, satisfies
$D=\{\Phi_{\mathrm{BP}}\geq0\}$ and
$|\Phi_{\mathrm{BP}}(x)|\leq\|x\|$ for every $x\in X$, and there exists
$\eta>0$ such that
\[
\Phi_{\mathrm{BP}}(a)\geq\eta\|a\|>0,
\quad \forall a\in A,\qquad
\Phi_{\mathrm{BP}}(k)\leq0,
\quad \forall k\in K,
\]
with $\Phi_{\mathrm{BP}}(k)<0$ for every
$k\in K\setminus\{0_X\}$.
\end{corollary}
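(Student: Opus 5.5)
The plan is to run the argument of Theorem~\ref{thm:finite-separation-directionally-compact} inside $\mathcal F_{\mathrm{BP}}$, replacing the adapted defining functions from Lemma~\ref{lem:adapted_defining_function} with the explicit functions $x\mapsto g(x)-\alpha\|x\|$.

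First, Theorem~\ref{thm:BP-directionally-fine} tells us that $\mathcal F_{\mathrm{BP}}$ is directionally fine. Set $D_0:=\Pi_{S_X}(A)$, which is compact, and $\eta_0:=\tfrac12 d(D_0,K)>0$. For each $u\in D_0$, directional fineness gives $C(g_u,\alpha_u)$ with
\[
u\in\intt C(g_u,\alpha_u)\subset C(g_u,\alpha_u)\subset\cone B(u,\eta_0).
\]
By the argument in the proof of Proposition~\ref{prop:closed_cone_point}, $\cone B(u,\eta_0)\cap K=\{0_X\}$, since $\eta_0<d(u,K)$.

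Next, define $\psi_u(x):=g_u(x)-\alpha_u\|x\|$. It is continuous and positively homogeneous, $C(g_u,\alpha_u)=\{\psi_u\ge0\}$, and $|\psi_u(x)|\le(1+\alpha_u)\|x\|<2\|x\|$. The set $\{\psi_u>0\}$ is open, contained in $C(g_u,\alpha_u)$, and contains $u$. I need $\psi_u(u)>0$ here, but $u\in\intt C$ alone does not give $g_u(u)>\alpha_u\|u\|$. To secure it, I would choose the cone directly from the construction in Theorem~\ref{thm:BP-directionally-fine}, where $\alpha<g(u)$ was arranged. Alternatively, I would shrink $\alpha_u$ slightly: if $\alpha'<\alpha_u$ is close enough, then $C(g_u,\alpha')$ still sits inside a cone $\cone B(u,2\eta_0)$ separated from $K$, provided $\eta_0$ was taken as $\tfrac14 d(D_0,K)$. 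I expect this interior-versus-strict-inequality point to be the main technical subtlety.

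Then cover $D_0$ by the open sets $\{\psi_u>0\}$ and extract a finite subcover $u_1,\dots,u_m$. Write $g_i,\alpha_i,C_i$ for the corresponding data. The inclusion $A\subset\bigcup_i\intt C_i$ follows by homogeneity, and $\bigcup_i\intt C_i\subset\intt D$ is trivial. Since each $C_i$ meets $K$ only at $0_X$, we get $D\cap K=\{0_X\}$.

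Now set $\Phi_{\mathrm{BP}}:=\tfrac12\max_i\psi_i$. Continuity and positive homogeneity are immediate, and $D=\{\Phi_{\mathrm{BP}}\ge0\}$. The bound $|\Phi_{\mathrm{BP}}|\le\tfrac12\cdot 2\|x\|=\|x\|$ follows from the estimate on each $\psi_i$. For $k\in K\setminus\{0_X\}$, we have $k\notin C_i$ for every $i$, so each $\psi_i(k)<0$ and hence $\Phi_{\mathrm{BP}}(k)<0$; also $\Phi_{\mathrm{BP}}(0_X)=0$.

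Finally, $\Phi_{\mathrm{BP}}$ is continuous and strictly positive on the compact set $D_0$, since each point lies in some $\{\psi_i>0\}$. So $\eta:=\min_{D_0}\Phi_{\mathrm{BP}}>0$, and homogeneity gives $\Phi_{\mathrm{BP}}(a)=\|a\|\,\Phi_{\mathrm{BP}}(a/\|a\|)\ge\eta\|a\|$ for every $a\in A$.
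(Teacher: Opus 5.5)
Your proof is correct, and its skeleton matches the paper's. Both use directional fineness of $\mathcal F_{\mathrm{BP}}$ from Theorem~\ref{thm:BP-directionally-fine}, a finite cover of the compact set $\Pi_{S_X}(A)$ by cones meeting $K$ only at $0_X$, the maximum of the explicit functions $g_i-\alpha_i\|\cdot\|$, and $\eta$ obtained as a minimum over $\Pi_{S_X}(A)$.

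The difference lies in one step. The paper applies Theorem~\ref{thm:finite-separation-directionally-compact} as a black box, where the covering uses the abstract adapted defining functions of Lemma~\ref{lem:adapted_defining_function}. Only afterwards does it switch to $p_i(x)=g_i(x)-\alpha_i\|x\|$, using $\intt C_i=\{p_i>0\}$ to get $\Phi_{\mathrm{BP}}>0$ on $\Pi_{S_X}(A)$. You instead rerun the covering with $\psi_u$ itself. You secure $\psi_u(u)>0$ by taking the cone produced inside the proof of Theorem~\ref{thm:BP-directionally-fine}, where $\alpha<g(u)$ is arranged, and this is legitimate. The paper's route is shorter and uses only theorem statements, but it relies on the interior identity without proving it. Your route avoids that identity but depends on a specific construction rather than on the statement of the theorem.

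The worry behind your detour is unfounded, though. For $g\in S_{X^*}$ and $0<\alpha<1$ one always has $\intt C(g,\alpha)=\{x: g(x)>\alpha\|x\|\}$. To see this, suppose $x\neq0_X$ and $g(x)=\alpha\|x\|$, and take $\phi\in S_{X^*}$ with $\phi(x)=\|x\|$. Since $\|g-\alpha\phi\|\geq1-\alpha>0$, there is $y$ with $g(y)<\alpha\phi(y)$. Using $\|x+ty\|\geq\phi(x+ty)$, one gets $g(x+ty)-\alpha\|x+ty\|\leq t\bigl(g(y)-\alpha\phi(y)\bigr)<0$ for every $t>0$, so $x\notin\intt C(g,\alpha)$. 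Also $0_X\notin\intt C(g,\alpha)$, since $C(g,\alpha)\neq X$. Hence $u\in\intt C$ does give $g_u(u)>\alpha_u$, contrary to what you state, and this is exactly the fact the paper's short argument uses.

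Your second alternative, shrinking $\alpha_u$, is not justified as written. Decreasing $\alpha$ enlarges the cone, and keeping the enlarged cone inside a conic neighborhood of $u$ disjoint from $K$ requires a perturbation argument you do not give. Since the first alternative already works, you can simply drop it.
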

\begin{proof}
By Theorem~\ref{thm:BP-directionally-fine},
$\mathcal F_{\mathrm{BP}}$ is directionally fine. Hence
Theorem~\ref{thm:finite-separation-directionally-compact}
provides Bishop--Phelps cones $C_1,\ldots,C_m$ such that
$A\subset\bigcup_{i=1}^m\operatorname{int}C_i$ and
$C_i\cap K=\{0_X\}$ for every $i$. Consequently,
$D=\bigcup_{i=1}^m C_i$ is a closed cone and
$D\cap K=\{0_X\}$. For each $i$, set $p_i(x):=g_i(x)-\alpha_i\|x\|$. Then
$C_i=\{p_i\geq0\}$ and $\operatorname{int}C_i=\{p_i>0\}$, whence $D=\left\{\max_{1\leq i\leq m}p_i\geq0\right\}
=\{\Phi_{\mathrm{BP}}\geq0\}$. Set $M_A:=\Pi_{S_X}(A)$. Since the $C_i$ are cones, $M_A\subset\bigcup_{i=1}^m\operatorname{int}C_i$, and therefore
$\Phi_{\mathrm{BP}}>0$ on $M_A$. By directional compactness,
$M_A$ is compact, so
$\eta:=\min_{u\in M_A}\Phi_{\mathrm{BP}}(u)>0$. Hence, for every
$a\in A$, $\Phi_{\mathrm{BP}}(a)
=\|a\|\Phi_{\mathrm{BP}}(a/\|a\|)
\geq\eta\|a\|$. If $0_X\neq k\in K$, then $k\notin C_i$ for every $i$, and thus
$p_i(k)<0$ for every $i$. Hence $\Phi_{\mathrm{BP}}(k)<0$, while
$\Phi_{\mathrm{BP}}(0_X)=0$. Finally, $|p_i(x)|\leq |g_i(x)|+\alpha_i\|x\|\leq2\|x\|$
for every $i$ and $x\in X$. Hence
$|\Phi_{\mathrm{BP}}(x)|
\leq\frac12\max_{1\leq i\leq m}|p_i(x)|
\leq\|x\|$, which completes the proof.
\end{proof}
The preceding corollary provides an explicit Bishop--Phelps realization
of finite separation for directionally compact sets. For locally compact
cones, the approximation results of Subsection~\ref{subsec:approximation-separation-cones}
yield more: the finite union may be chosen as an arbitrarily fine outer
approximation, with quantitative Hausdorff control, while preserving
separation from a second cone and, when both cones are locally compact,
allowing a symmetric approximation.
\begin{corollary}
\label{cor:BP-approximation-separation}
Assume that every point of $S_X$ is a denting point of $B_X$.

\begin{enumerate}
\item[\rm (i)]
Let $L\subset X$ be a locally compact cone. For every
$\varepsilon\in(0,1)$ there exist Bishop--Phelps cones
$C_1,\ldots,C_m$ such that $D_L^\varepsilon:=\bigcup_{i=1}^m C_i$ is
closed,
\[
L\setminus\{0_X\}\subset\operatorname{int}D_L^\varepsilon,
\qquad
L\subset D_L^\varepsilon\subset L_\varepsilon,
\]
and
\[
d_H(D_L^\varepsilon\cap S_X,B_L)\le2\varepsilon,
\qquad
d_H(D_L^\varepsilon\cap RB_X,L\cap RB_X)\le2R\varepsilon
\quad(R>0).
\]
Hence $D_L^\varepsilon\cap S_X\to B_L$ and
$D_L^\varepsilon\cap RB_X\to L\cap RB_X$ in Hausdorff distance as
$\varepsilon\downarrow0$, for every $R>0$.

\item[\rm (ii)]
Let $L,K\subset X$ be cones with $L\cap K=\{0_X\}$, where $L$ is
locally compact and $K$ is closed, and put $d:=d(B_L,K)>0$. For every
$0<\varepsilon<\min\{1,d\}$, the cones in {\rm (i)} may be chosen so that
$D_L^\varepsilon\cap K=\{0_X\}$. Writing
$C_i=C(g_i,\alpha_i)$, the function
\[
\Phi_{\mathrm{BP},\varepsilon}(x)
:=
\frac12\max_{1\le i\le m}
\bigl(g_i(x)-\alpha_i\|x\|\bigr)
\]
is continuous and positively homogeneous, with
\[
D_L^\varepsilon=\{\Phi_{\mathrm{BP},\varepsilon}\ge0\},
\qquad
|\Phi_{\mathrm{BP},\varepsilon}|\le\|\cdot\|.
\]
Moreover, for some $\eta_\varepsilon>0$,
\[
\Phi_{\mathrm{BP},\varepsilon}(l)
\ge\eta_\varepsilon\|l\|>0
\quad(l\in L\setminus\{0_X\}),
\qquad
\Phi_{\mathrm{BP},\varepsilon}(k)<0
\quad(k\in K\setminus\{0_X\}).
\]

\item[\rm (iii)]
Let $L,K\subset X$ be locally compact cones with
$L\cap K=\{0_X\}$, and put $d:=d(B_L,B_K)>0$. For every
$0<\varepsilon<\min\{1,d/4\}$ there exist finite unions of
Bishop--Phelps cones $D_L^\varepsilon,D_K^\varepsilon$ such that
\[
L\setminus\{0_X\}\subset\operatorname{int}D_L^\varepsilon,
\qquad
K\setminus\{0_X\}\subset\operatorname{int}D_K^\varepsilon,
\]
\[
L\subset D_L^\varepsilon\subset L_\varepsilon,
\qquad
K\subset D_K^\varepsilon\subset K_\varepsilon,
\qquad
D_L^\varepsilon\cap D_K^\varepsilon=\{0_X\},
\]
and
\[
\max\!\left\{
d_H(D_L^\varepsilon\cap S_X,B_L),
d_H(D_K^\varepsilon\cap S_X,B_K)
\right\}\le2\varepsilon,
\]
\[
\begin{aligned}
\max\!\bigl\{
&d_H(D_L^\varepsilon\cap RB_X,L\cap RB_X),\\
&d_H(D_K^\varepsilon\cap RB_X,K\cap RB_X)
\bigr\}
\le 2R\varepsilon,
\qquad R>0.
\end{aligned}
\]
\end{enumerate}
\end{corollary}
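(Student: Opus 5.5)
The plan is to reduce everything to the abstract results of Section~\ref{sec:separation-approximation}, using Theorem~\ref{thm:BP-directionally-fine}. Under the hypothesis that every point of $S_X$ is a denting point of $B_X$, that theorem says $\mathcal F_{\mathrm{BP}}$ is directionally fine. Each part then follows by specializing the corresponding abstract statement to $\cF=\mathcal F_{\mathrm{BP}}$.

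For (i), I would apply Theorem~\ref{thm:finite-outer-approximation} to $L$ and $\varepsilon$. This gives Bishop--Phelps cones $C_1,\ldots,C_m$ whose union $D_L^\varepsilon$ is closed and satisfies the interior and sandwich inclusions. The proof of Corollary~\ref{cor:finite-hausdorff-approximation} applies verbatim to this $D_L^\varepsilon$ and yields the two Hausdorff estimates. Letting $\varepsilon\downarrow0$ gives the convergence statements.

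For (ii), I would invoke Theorem~\ref{thm:two-cones-finite-separation}. Its proof builds $D_L^\varepsilon$ via Theorem~\ref{thm:finite-outer-approximation}, so the same cones as in (i) can be used and additionally satisfy $D_L^\varepsilon\cap K=\{0_X\}$. Rather than using the adapted defining functions of Lemma~\ref{lem:adapted_defining_function}, I would set $p_i(x):=g_i(x)-\alpha_i\|x\|$ and argue exactly as in the proof of Corollary~\ref{cor:explicit-BP-max-scalarization}:
\begin{itemize}
\item $C_i=\{p_i\ge0\}$ and $\intt C_i=\{p_i>0\}$, the latter because $u$ with $p_i(u)>0$ has a neighborhood where $p_i>0$, and conversely points with $p_i=0$ are boundary points since scaling by the norm-continuity gives nearby points with $p_i<0$ (or simply cite the corresponding step in that corollary);
\item hence $D_L^\varepsilon=\{\Phi_{\mathrm{BP},\varepsilon}\ge0\}$;
\item $|p_i|\le2\|\cdot\|$, which gives $|\Phi_{\mathrm{BP},\varepsilon}|\le\|\cdot\|$.
\end{itemize}
For the margin on $L$: since $B_L\subset\bigcup_i\intt C_i$, we have $\Phi_{\mathrm{BP},\varepsilon}>0$ on the compact set $B_L$. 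Put $\eta_\varepsilon:=\min_{B_L}\Phi_{\mathrm{BP},\varepsilon}>0$; positive homogeneity then gives $\Phi_{\mathrm{BP},\varepsilon}(l)\ge\eta_\varepsilon\|l\|$. For $k\in K\setminus\{0_X\}$, $k\notin C_i$ for all $i$, so every $p_i(k)<0$ and hence $\Phi_{\mathrm{BP},\varepsilon}(k)<0$.

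For (iii), I would apply Theorem~\ref{thm:symmetric-finite-separation} with $\cF=\mathcal F_{\mathrm{BP}}$. This yields $D_L^\varepsilon,D_K^\varepsilon$ with the interior, sandwich and disjointness properties, together with the inclusions $D_L^\varepsilon\cap S_X\subset\{d(\cdot,B_L)\le2\varepsilon\}$ and the analogous one for $K$. The Hausdorff estimates for both cones then follow by the argument of Corollary~\ref{cor:finite-hausdorff-approximation}, applied separately to $L$ and to $K$.

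No step is genuinely hard. The only point needing care is in (ii): one must make sure the cones chosen to separate from $K$ are the same ones carrying the Hausdorff bounds of (i). This holds because Theorem~\ref{thm:two-cones-finite-separation} obtains them from Theorem~\ref{thm:finite-outer-approximation}. One must also check $\intt C(g,\alpha)=\{g>\alpha\|\cdot\|\}$, which is already used in Corollary~\ref{cor:explicit-BP-max-scalarization}.
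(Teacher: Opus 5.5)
Your proposal is correct and follows essentially the paper's own proof: (i) and (ii) are derived exactly as you describe, and for (iii) the paper applies (i) to $L$ and $K$ separately and reruns the $4\varepsilon$ disjointness argument, which is precisely the proof of Theorem~\ref{thm:symmetric-finite-separation} that you cite. One small correction: your aside that points with $p_i=0$ lie in $\bd C_i$ ``by scaling'' does not work as written, since $p_i$ is positively homogeneous and scaling preserves $p_i=0$; instead, whenever $p_i(x)=0$ and $g_i(w)>\alpha_i\|w\|$, one has $p_i(x-tw)\le -t\bigl(g_i(w)-\alpha_i\|w\|\bigr)<0$ for all $t>0$ (such $w$ exists because $\alpha_i<1=\|g_i\|$), or you can simply cite the step as you propose.
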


\begin{proof}
By Theorem~\ref{thm:BP-directionally-fine},
$\mathcal F_{\mathrm{BP}}$ is directionally fine. Part~{\rm (i)} follows
from Theorem~\ref{thm:finite-outer-approximation} and
Corollary~\ref{cor:finite-hausdorff-approximation}.

Part~{\rm (ii)} follows from
Theorem~\ref{thm:two-cones-finite-separation} and
Corollary~\ref{cor:finite-hausdorff-approximation}. Moreover, by the
construction in Theorem~\ref{thm:two-cones-finite-separation}, the
cones $C_1,\ldots,C_m$ may be chosen so that $B_L\subset\bigcup_{i=1}^m\operatorname{int}C_i$. Hence $\Phi_{\mathrm{BP},\varepsilon}>0$ on the compact set $B_L$, so $\eta_\varepsilon
:=
\min_{u\in B_L}\Phi_{\mathrm{BP},\varepsilon}(u)>0$.
Positive homogeneity gives
$\Phi_{\mathrm{BP},\varepsilon}(l)\ge
\eta_\varepsilon\|l\|$ for $l\in L\setminus\{0_X\}$; the remaining
assertions concerning $\Phi_{\mathrm{BP},\varepsilon}$ follow exactly
as in Corollary~\ref{cor:explicit-BP-max-scalarization}.

For {\rm (iii)}, apply part~{\rm (i)} separately to $L$ and $K$.
If $0_X\ne x\in D_L^\varepsilon\cap D_K^\varepsilon$ and
$v:=x/\|x\|$, the Hausdorff estimates give $u\in B_L$ and $w\in B_K$
with $\|v-u\|\le2\varepsilon$ and $\|v-w\|\le2\varepsilon$. Hence $d\le\|u-w\|
\le\|u-v\|+\|v-w\|
\le4\varepsilon<d$, a contradiction.
\end{proof}

\section{Axial construction principles for directionally fine families}
\label{sec:menu_directionally_fine_families}
The preceding section singles out the classical Bishop--Phelps family:
by Theorem~\ref{thm:BP-directionally-fine}, it is directionally fine
precisely when every point of $S_X$ is a denting point of $B_X$.
The purpose of the present section is complementary. We first show that
every closed convex cone which is already localized around a prescribed
direction admits an axial representation generated by a transversally
coercive deviation. In particular, this gives a direct link with the
Bishop--Phelps cones selected in Section~4. We then use transversal
coercivity in the opposite direction, as a general construction principle
producing directionally fine families in arbitrary normed spaces, without
any rotundity or dentability assumption on the norm. Finally, we illustrate
how the explicit structure of one of the resulting families can be combined
with the finite outer approximation theory of Section~3 to obtain
penalization, recovery, and convergence results for optimization over
locally compact cones.

Although Lemma~\ref{lem:adapted_defining_function} provides an adapted
defining function for every closed cone selected by directional fineness,
the families considered below come with explicit defining functions arising
directly from their analytic description. Such functions retain the
geometric data of the construction and are therefore preferable when
further applications are envisaged, as illustrated in
Subsection~\ref{subsec:UAD-optimization}.

\begin{definition}
\label{def:norming-pair-axial}
A pair $(u,f)\in S_X\times S_{X^*}$ is called a \emph{norming pair} if
$f(u)=1$. Associated with $(u,f)$, define $Q_{u,f}:=I-u\otimes f$, $Q_{u,f}x=x-f(x)u$.
Then $x=f(x)u+Q_{u,f}x$, $Q_{u,f}x\in\ker f$, $Q_{u,f}^2=Q_{u,f}$, $\ker Q_{u,f}=\mathbb Ru$, $\operatorname{ran}Q_{u,f}=\ker f$, $\|Q_{u,f}\|\leq2$. Thus $Q_{u,f}$ is the projection onto $\ker f$ along $\mathbb Ru$.

If $f(x)>0$, then $x/f(x)$ is the unique representative of the positive
ray generated by $x$ in the affine section $\{f=1\}$, and $\left\|
\frac{x}{f(x)}-u
\right\|
=
\frac{\|Q_{u,f}x\|}{f(x)}$. Hence the quotient on the right measures the deviation of the direction
of $x$ from $u$ in this affine section.
\end{definition}

This observation suggests controlling the transversal component
$Q_{u,f}x$ through a more general positively homogeneous deviation.

\begin{definition}\label{def:transversal-coercivity}
Let $(u,f)$ be a norming pair. A continuous positively homogeneous
mapping $D_{u,f}:X\longrightarrow[0,+\infty)$ is said to be \emph{transversally coercive} at $(u,f)$ if $D_{u,f}(u)=0$ and there exists $c_{u,f}>0$ such that $D_{u,f}(x)\geq c_{u,f}\|Q_{u,f}x\|$ whenever $f(x)\geq0$. The number $c_{u,f}$ is called a transversal coercivity constant.
\end{definition}

Equivalently, $\|Q_{u,f}x\| \leq \frac{1}{c_{u,f}}D_{u,f}(x)$, whenever $f(x)\geq0$, so that every upper estimate on $D_{u,f}(x)$ yields a corresponding
upper estimate on the transversal component. The canonical choice $D_{u,f}(x):=\|Q_{u,f}x\|$ has coercivity constant $1$ for every norming pair in every normed
space.
More generally, if
$p:\ker f\to[0,+\infty)$ is continuous and positively homogeneous and
satisfies $p(z)\geq c\|z\|$, $z\in\ker f$, then $D_{u,f}:=p\circ Q_{u,f}$ is transversally coercive. This includes,
for instance, equivalent transversal norms and deviations obtained
from bounded below linear operators.

\subsection{Axial representation of localized convex cones}
\label{subsec:axial-representation}

Transversal coercivity can first be read as a representation property:
every convex cone already localized around a direction admits such a
description.

\begin{proposition}
\label{prop:localized-convex-axial-representation}
Let $C\subset X$ be a closed convex cone, let
$u\in S_X\cap\operatorname{int}C$, and assume that $C\subset\operatorname{cone}B(u,\varepsilon)$ for some $0<\varepsilon<1$. For every $f\in S_{X^*}$ with $f(u)=1$, there exists a continuous,
positively homogeneous and subadditive mapping
$D_{C,u,f}:X\to[0,+\infty)$ which is transversally coercive and satisfies $$C=\{x\in X:D_{C,u,f}(x)\leq f(x)\}.$$ More precisely, $D_{C,u,f}$ may be chosen with transversal coercivity
constant $c_{u,f}=\frac{1-\varepsilon}{2\varepsilon}$. Consequently,
$\varphi_{C,u,f}:=f-D_{C,u,f}$ is a continuous superlinear adapted
defining function for $C$ at $u$.
\end{proposition}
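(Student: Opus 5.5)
The plan is to combine the superlinear function of Proposition~\ref{prop:superlinear_adapted} with the canonical transversal deviation $\|Q_{u,f}\cdot\|$ through a maximum. First attempt: set $D:=f-\varphi^{\rm sl}_{C,u}$. This is continuous and positively homogeneous. It is also subadditive, being a linear map minus a superlinear one. It is nonnegative: the last step of the proof of Proposition~\ref{prop:superlinear_adapted} shows that $f\in B_u$ whenever $C\subset\cone B(u,\varepsilon)$ and $f(u)=1$, hence $\varphi^{\rm sl}_{C,u}\le f$. Finally $D(u)=0$ and $\{D\le f\}=\{\varphi^{\rm sl}_{C,u}\ge0\}=C$.

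The obstacle is transversal coercivity. Concavity of $\varphi^{\rm sl}_{C,u}$ along a line $u+\lambda z$ in $\{f=1\}$ only bounds it from below between $u$ and the boundary. Indeed, $\varphi^{\rm sl}_{C,u}$ may stay close to $1$ on a large part of $C\cap\{f=1\}$, so $f-\varphi^{\rm sl}_{C,u}$ need not dominate $c\|Q_{u,f}x\|$. I would therefore force coercivity and define
\[
D_{C,u,f}(x):=\max\bigl\{f(x)-\varphi^{\rm sl}_{C,u}(x),\ c\,\|Q_{u,f}x\|\bigr\},\qquad c:=\frac{1-\varepsilon}{2\varepsilon}.
\]
This is a maximum of two continuous, nonnegative, positively homogeneous, subadditive maps, since $\|Q_{u,f}\cdot\|$ is a seminorm. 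Hence $D_{C,u,f}$ has the same properties, $D_{C,u,f}(u)=0$ because $Q_{u,f}u=0$, and coercivity with constant $c$ holds on all of $X$.

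It remains to check $C=\{D_{C,u,f}\le f\}$. By construction, $D_{C,u,f}(x)\le f(x)$ if and only if $\varphi^{\rm sl}_{C,u}(x)\ge0$ and $c\|Q_{u,f}x\|\le f(x)$. The first condition alone already describes $C$. So I only need to show that every $x\in C$ satisfies $\|Q_{u,f}x\|\le \frac{2\varepsilon}{1-\varepsilon}f(x)$; this is the key geometric estimate, and it is exactly what the constant $c$ is tuned for.

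The estimate is trivial for $x=0_X$. Otherwise write $x=ty$ with $t>0$ and $y\in B(u,\varepsilon)$, so that $f(y)\ge1-\|y-u\|\ge1-\varepsilon>0$. Using the identity $\|Q_{u,f}x\|/f(x)=\|x/f(x)-u\|$ from Definition~\ref{def:norming-pair-axial}, and $x/f(x)=y/f(y)$, I would compute
\[
\Bigl\|\frac{y}{f(y)}-u\Bigr\|\le\frac{\|y-u\|+|f(u)-f(y)|}{f(y)}\le\frac{2\varepsilon}{1-\varepsilon}.
\]
The middle inequality comes from writing $y/f(y)-u=\bigl((y-u)+(1-f(y))u\bigr)/f(y)$, and the last uses $|1-f(y)|=|f(u-y)|\le\|u-y\|\le\varepsilon$ together with $f(y)\ge1-\varepsilon$.

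Finally, put $\varphi_{C,u,f}:=f-D_{C,u,f}=\min\{\varphi^{\rm sl}_{C,u},\ f-c\|Q_{u,f}\cdot\|\}$. This is a minimum of two continuous superlinear functions, hence continuous and superlinear. It defines $C$ by the equality just proved. It satisfies $\varphi_{C,u,f}(u)=1$, since both terms equal $1$ at $u$. And $\varphi_{C,u,f}\le f\le\|\cdot\|$, since $D_{C,u,f}\ge0$ and $\|f\|=1$. By Definition~\ref{def:adapted_defining_function} it is therefore an adapted defining function for $C$ at $u$.
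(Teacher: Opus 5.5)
Your proof is correct, but it takes a different route from the paper's. The paper works in the primal. It sets $V:=\{z\in\ker f:u+z\in C\}$, proves $\rho(B_X\cap\ker f)\subset V\subset\frac{2\varepsilon}{1-\varepsilon}(B_X\cap\ker f)$, and takes $D_{C,u,f}:=p_V\circ Q_{u,f}$, where $p_V$ is the Minkowski gauge of $V$. The inner ball gives finiteness and continuity, and the outer ball gives coercivity. Closedness of $V$, through $V=\{p_V\le1\}$, gives $C=\{D_{C,u,f}\le f\}$.

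You instead take finiteness, continuity and the level-set identity from the dual function $\varphi^{\rm sl}_{C,u}$ of Proposition~\ref{prop:superlinear_adapted}, hence ultimately from Hahn--Banach separation, and you force coercivity with a maximum. Your one geometric input, $\|Q_{u,f}x\|\le\frac{2\varepsilon}{1-\varepsilon}f(x)$ on $C$, is exactly the paper's outer inclusion for $V$. Your route is shorter once Proposition~\ref{prop:superlinear_adapted} is available. The paper's route needs no separation argument and presents $D_{C,u,f}$ directly as the gauge of the section $C\cap\{f=1\}$, which is the viewpoint used in Remark~\ref{rem:BP-axial-interpretation}.

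One correction to your commentary: the obstacle you describe does not arise, and the maximum is redundant. Since $C\subset\cone B(u,\varepsilon)$, every $x^*$ with $x^*(u)=1$ and $\|x^*\|\le1/\varepsilon$ lies in $B_u$. Given $x$, choose $h\in S_{X^*}$ with $h(Q_{u,f}x)=\|Q_{u,f}x\|$, and put $x^*:=f-s\,(h-h(u)f)$ with $s:=\frac{1-\varepsilon}{2\varepsilon}$. Then $x^*(u)=1$, $\|x^*\|\le1+2s=1/\varepsilon$, and $x^*(x)=f(x)-s\|Q_{u,f}x\|$. Hence $f-\varphi^{\rm sl}_{C,u}\ge\frac{1-\varepsilon}{2\varepsilon}\|Q_{u,f}\cdot\|$ on all of $X$.

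In fact, write every $x^*$ with $x^*(u)=1$ as $f+\ell\circ Q_{u,f}$ with $\ell\in(\ker f)^*$, and apply the bipolar theorem to $V$. This gives $f-\varphi^{\rm sl}_{C,u}=p_V\circ Q_{u,f}$, so your first attempt is precisely the paper's $D_{C,u,f}$. Your heuristic fails because the infimum defining $\varphi^{\rm sl}_{C,u}$ runs over all of $B_u$, which is large exactly because $C$ is narrow. None of this affects the validity of your argument.
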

\begin{proof}
Set $Q:=Q_{u,f}$ and
$V:=\{z\in\ker f:u+z\in C\}$. Then $V$ is a closed convex
neighborhood of $0_X$ in $\ker f$. Since $u\in\operatorname{int}C$,
there exists $\rho>0$ such that
$\rho(B_X\cap\ker f)\subset V$.

Let $z\in V$. Since
$u+z\in C\subset\operatorname{cone}B(u,\varepsilon)$, there exist
$t>0$ and $y\in B(u,\varepsilon)$ such that $u+z=ty$. As
$1=f(u+z)=tf(y)$ and
$f(y)\geq1-\|y-u\|\geq1-\varepsilon$, we have
$t\leq(1-\varepsilon)^{-1}$. Applying $Q$ and using
$Qu=0_X$, $Qz=z$, and $\|Q\|\leq2$, we obtain $\|z\|
=\|tQ(y-u)\|
\leq\frac{2\varepsilon}{1-\varepsilon}$. Hence $\rho(B_X\cap\ker f)
\subset V
\subset
\frac{2\varepsilon}{1-\varepsilon}(B_X\cap\ker f)$.

Let $p_V(z):=\inf\{\lambda>0:z\in\lambda V\}$, $z\in\ker f$,
be the Minkowski functional of $V$ and define
$D_{C,u,f}:=p_V\circ Q$. Since $V$ is convex and contains a
neighborhood of $0_X$ in $\ker f$, $p_V$ is finite-valued,
continuous, positively homogeneous and subadditive. The preceding
upper estimate for $V$ gives $D_{C,u,f}(x)
\geq
\frac{1-\varepsilon}{2\varepsilon}\|Qx\|,
\qquad x\in X$.

We next prove the representation. If
$x\in C\setminus\{0_X\}$, localization yields $f(x)>0$. Since $C$ is
a cone,
$x/f(x)=u+Qx/f(x)\in C$, and therefore
$Qx/f(x)\in V$. Hence
$D_{C,u,f}(x)\leq f(x)$, and the same inequality is trivial at
$0_X$.

Conversely, suppose that $D_{C,u,f}(x)\leq f(x)$. Since
$D_{C,u,f}\geq0$, one has $f(x)\geq0$. If $f(x)=0$, transversal
coercivity gives $Qx=0_X$, and hence
$x=f(x)u+Qx=0_X\in C$. If $f(x)>0$, then
$p_V(Qx/f(x))\leq1$. Since $V$ is closed, convex, and contains a neighborhood of $0_X$ in
$\ker f$, its Minkowski functional satisfies
$V=\{z\in\ker f:p_V(z)\leq1\}$. Thus $Qx/f(x)\in V$, so
$u+Qx/f(x)\in C$, and, since $C$ is a cone, $x\in C$. Therefore $C=\{x\in X:D_{C,u,f}(x)\leq f(x)\}$.

Finally, $\varphi_{C,u,f}:=f-D_{C,u,f}$ is continuous and positively
homogeneous, and it is superlinear because $D_{C,u,f}$ is subadditive.
Moreover, $\varphi_{C,u,f}(u)=1$ and
$\varphi_{C,u,f}(x)\leq f(x)\leq\|x\|$ for every $x\in X$.
Hence $\varphi_{C,u,f}$ is an adapted defining function for $C$ at $u$.
\end{proof}
\begin{remark}
\label{rem:BP-axial-interpretation}
Combining Proposition~\ref{prop:localized-convex-axial-representation}
with Theorem~\ref{thm:BP-directionally-fine} shows that, whenever the
Bishop--Phelps family is directionally fine, every localized
Bishop--Phelps cone selected there admits an exact axial representation
of the preceding form. The resulting defining function $f-D_{C,u,f}$
need not coincide with the usual Bishop--Phelps function
$g-\alpha\|\cdot\|$; it is an alternative representation obtained from
the transversal section $C\cap\{f=1\}$.
\end{remark}

\subsection{Construction by transversal coercivity}
\label{subsec:normalization-principles}

We now use the same idea in the opposite direction. A transversally
coercive deviation produces localized cones through either axial or
norm normalization.

\begin{theorem}
\label{thm:two-normalization-principles}
Let $(u,f)$ be a norming pair and let
$D_{u,f}:X\to[0,+\infty)$ be transversally coercive with constant
$c_{u,f}>0$.

\begin{enumerate}
\item[(i)]
For $\delta>0$, set
\[
\varphi^a_{D,u,f,\delta}
:=f-\delta^{-1}D_{u,f},
\qquad
C^a_D(u,f,\delta)
:=\{\varphi^a_{D,u,f,\delta}\geq0\}.
\]
Then $C^a_D(u,f,\delta)$ is a closed cone,
$u\in\operatorname{int}C^a_D(u,f,\delta)$, and
\[
0<\delta<c_{u,f}\varepsilon
\quad\Longrightarrow\quad
C^a_D(u,f,\delta)\subset\operatorname{cone}B(u,\varepsilon),
\qquad 0<\varepsilon<1.
\]
For such $\delta$ the cone is pointed and
$\varphi^a_{D,u,f,\delta}$ is an adapted defining function. If
$D_{u,f}$ is subadditive, then $\varphi^a_{D,u,f,\delta}$ is
superlinear and $C^a_D(u,f,\delta)$ is convex.

\item[(ii)]
For $r>0$, set
\[
\varphi^n_{D,u,f,r}(x)
:=
\min\{f(x),\,\|x\|-r^{-1}D_{u,f}(x)\},
\]
and
\[
C^{\rm n}_D(u,f,r)
:=
\{\varphi^n_{D,u,f,r}\geq0\}.
\]
Then $C^{\rm n}_D(u,f,r)$ is a closed cone,
$u\in\operatorname{int}C^{\rm n}_D(u,f,r)$, and
\[
0<r<\frac{c_{u,f}\varepsilon}{1+\varepsilon}
\quad\Longrightarrow\quad
C^{\rm n}_D(u,f,r)\subset\operatorname{cone}B(u,\varepsilon).
\]
For such $r$ the cone is pointed and
$\varphi^n_{D,u,f,r}$ is an adapted defining function.
\end{enumerate}
\end{theorem}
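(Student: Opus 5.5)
The plan is to verify both parts directly from the definitions, using the decomposition $x=f(x)u+Q_{u,f}x$ and the identity $\|x/f(x)-u\|=\|Q_{u,f}x\|/f(x)$ for $f(x)>0$ from Definition~\ref{def:norming-pair-axial}. Throughout I write $Q:=Q_{u,f}$, $D:=D_{u,f}$, $c:=c_{u,f}$.

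\emph{Structural properties, common to both parts.} Since $f$, $\|\cdot\|$ and $D$ are continuous and positively homogeneous, so are $\varphi^a_{D,u,f,\delta}$ and $\varphi^n_{D,u,f,r}$. Their nonnegativity sets are therefore closed cones (Remark~\ref{rem:closedness_essential}). For the interior point I compute, using $f(u)=\|u\|=1$ and $D(u)=0$,
\[
\varphi^a_{D,u,f,\delta}(u)=1,\qquad \varphi^n_{D,u,f,r}(u)=\min\{1,1\}=1 .
\]
By continuity, $\{\varphi>0\}$ is then an open neighbourhood of $u$ contained in the cone, so $u$ lies in its interior. Adaptedness follows from $\varphi^a_{D,u,f,\delta}\le f\le\|\cdot\|$ and $\varphi^n_{D,u,f,r}\le f\le\|\cdot\|$. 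Pointedness will come for free once each cone is shown to lie in $\operatorname{cone}B(u,\varepsilon)$, which is pointed by Proposition~\ref{prop:propiedades_epsilon_entornos_conicos_de_rayos}. Finally, if $D$ is subadditive, then $f-\delta^{-1}D$ is superlinear, and the upper level set $\{\varphi^a_{D,u,f,\delta}\ge0\}$ of a superlinear positively homogeneous function is convex.

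\emph{Localization in (i).} Take $0\neq x\in C^a_D(u,f,\delta)$. Then $f(x)\ge\delta^{-1}D(x)\ge0$. If $f(x)=0$, then $D(x)=0$, so coercivity gives $Qx=0_X$ and hence $x=f(x)u+Qx=0_X$, a contradiction. Thus $f(x)>0$, and coercivity yields
\[
\Bigl\|\frac{x}{f(x)}-u\Bigr\|=\frac{\|Qx\|}{f(x)}\le\frac{D(x)}{c\,f(x)}\le\frac{\delta}{c}<\varepsilon .
\]
So $x/f(x)\in B(u,\varepsilon)$, and therefore $x\in\operatorname{cone}B(u,\varepsilon)$.

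\emph{Localization in (ii).} This is the step I expect to need the most care, because the constraint now controls $D(x)$ by $\|x\|$ rather than by $f(x)$. Take $0\neq x\in C^{\rm n}_D(u,f,r)$. Then $f(x)\ge0$ and $D(x)\le r\|x\|$, so coercivity gives $\|Qx\|\le (r/c)\|x\|$. From $\|x\|\le f(x)+\|Qx\|$ I get
\[
f(x)\ge(1-r/c)\|x\|>0,
\]
since $r/c<\varepsilon/(1+\varepsilon)<1$. Therefore
\[
\Bigl\|\frac{x}{f(x)}-u\Bigr\|=\frac{\|Qx\|}{f(x)}\le\frac{r/c}{1-r/c}=\frac{r}{c-r},
\]
and $r/(c-r)<\varepsilon$ is exactly the hypothesis $r(1+\varepsilon)<c\varepsilon$. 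Hence $x\in\operatorname{cone}B(u,\varepsilon)$, which completes (ii).
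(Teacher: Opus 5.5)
Your proof is correct and follows essentially the same route as the paper: localization via $\|x/f(x)-u\|=\|Q_{u,f}x\|/f(x)$ combined with the coercivity estimate, giving the bounds $\delta/c_{u,f}$ in (i) and $r/(c_{u,f}-r)$ in (ii), with closedness, interiority, adaptedness, pointedness and convexity read off from continuity, positive homogeneity, $\varphi(u)=1$, $\varphi\le f\le\|\cdot\|$ and the inclusion in $\operatorname{cone}B(u,\varepsilon)$. Your bound $f(x)\ge(1-r/c_{u,f})\|x\|$ in (ii) also justifies $f(x)>0$, which the paper simply asserts.
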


\begin{proof}
For (i), if $0_X\neq x\in C^a_D(u,f,\delta)$, then $c_{u,f}\|Q_{u,f}x\|
\leq D_{u,f}(x)\leq\delta f(x)$, so $f(x)>0$ and $\left\|\frac{x}{f(x)}-u\right\|
=\frac{\|Q_{u,f}x\|}{f(x)}
\leq\frac{\delta}{c_{u,f}}$. This proves the localization. Closedness and interiority follow from
continuity and $\varphi^a_{D,u,f,\delta}(u)=1$; pointedness follows
from the inclusion in $\operatorname{cone}B(u,\varepsilon)$, while
subadditivity of $D_{u,f}$ gives superlinearity.

For (ii), let $0_X\neq x\in C^{\rm n}_D(u,f,r)$ with $r<c_{u,f}$. Then
$f(x)>0$. Setting $t:=\|Q_{u,f}x\|/f(x)$ and using
$\|x\|\leq f(x)(1+t)$ gives $c_{u,f}t\leq r(1+t)$, $t\leq\frac{r}{c_{u,f}-r}$. The stated bound on $r$ makes the last quantity smaller than
$\varepsilon$, and hence $x\in\operatorname{cone}B(u,\varepsilon)$.
The remaining assertions follow directly from the definitions.
\end{proof}

The canonical choice $D_{u,f}(x)=\|Q_{u,f}x\|$ yields two explicit
universal models. The uniform axial deviation cone is
\[
C^{\rm ax}(u,f,\delta)
:=\{x\in X:f(x)-\delta^{-1}\|Q_{u,f}x\|\geq0\},
\]
and the norm-normalized axial cone is
\[
C^{\rm n}(u,f,r)
:=\{x\in X:f(x)\geq0,\ \|Q_{u,f}x\|\leq r\|x\|\}.
\]

\begin{corollary}\label{cor:canonical-axial-families}
For every normed space $X$, set
\[
\begin{aligned}
\mathcal F_{\rm ax}
&:=\{C^{\rm ax}(u,f,\delta):(u,f)\text{ norming},\ \delta>0\},\\
\mathcal F_{\rm n}
&:=\{C^{\rm n}(u,f,r):(u,f)\text{ norming},\ 0<r<1\}.
\end{aligned}
\]
Both families are directionally fine, and $\mathcal F_{\rm ax}$ is convex directionally
fine. More precisely, for $0<\varepsilon<1$,
$0<\delta<\varepsilon$ we have
$C^{\rm ax}(u,f,\delta)\subset\operatorname{cone}B(u,\varepsilon)$,
whereas $0<r<\varepsilon/(1+\varepsilon)$ implies
$C^{\rm n}(u,f,r)\subset\operatorname{cone}B(u,\varepsilon)$. Moreover, $C^{\rm ax}(u,f,\delta)
 =\operatorname{cone}\bigl((u+\ker f)\cap B(u,\delta)\bigr)$. Similarly, $C^{\rm n}(u,f,r)
 =\operatorname{cone}\bigl((u+\ker f)
 \cap\{z:\|z-u\|\le r\|z\|\}\bigr)$.

If $X=H$ is a Hilbert space, then
$C^{\rm n}(u,f,r)=C^{\rm ax}(u,f,r/\sqrt{1-r^2})$; hence the two constructions
generate the same family up to reparametrization. In general normed spaces they need
not coincide.
\end{corollary}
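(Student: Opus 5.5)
The plan is to specialize Theorem~\ref{thm:two-normalization-principles} to the canonical deviation $D_{u,f}(x)=\|Q_{u,f}x\|$. This deviation is continuous, positively homogeneous and subadditive, and it is transversally coercive with constant $c_{u,f}=1$.

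\textbf{Directional fineness.} Given $u\in S_X$, Hahn--Banach supplies $f\in S_{X^*}$ with $f(u)=1$, so $(u,f)$ is a norming pair.

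For the axial family, part~(i) with $c_{u,f}=1$ shows that any $0<\delta<\varepsilon$ gives $u\in\intt C^{\rm ax}(u,f,\delta)\subset\cone B(u,\varepsilon)$. The cone is closed, and it is convex because $D_{u,f}$ is subadditive. Hence $\mathcal F_{\rm ax}$ is convex directionally fine.

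For the normalized family, first check that $\{\min\{f(x),\|x\|-r^{-1}\|Q_{u,f}x\|\}\ge0\}$ coincides with $C^{\rm n}(u,f,r)$. Then part~(ii) gives the inclusion whenever $r<\varepsilon/(1+\varepsilon)$. This value is below $1$, so the cone lies in $\mathcal F_{\rm n}$.

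\textbf{Cone-generated descriptions.} For $C^{\rm ax}$ I will compare the two sides on each ray.
\begin{itemize}
\item The origin lies in both sets.
\item If $x\neq0_X$ lies in $C^{\rm ax}$, then $f(x)>0$, and $z:=x/f(x)=u+Q_{u,f}x/f(x)$ lies in $u+\ker f$ with $\|z-u\|=\|Q_{u,f}x\|/f(x)\le\delta$.
\item Conversely, take $x=tz$ with $t\ge0$, $z\in u+\ker f$ and $\|z-u\|\le\delta$. Then $f(x)=t$ and $Q_{u,f}x=t(z-u)$, so $f(x)-\delta^{-1}\|Q_{u,f}x\|\ge0$.
\end{itemize}
The same normalization $z=x/f(x)$ handles $C^{\rm n}$. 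The condition $\|Q_{u,f}x\|\le r\|x\|$ becomes $\|z-u\|\le r\|z\|$ after dividing by $f(x)>0$.

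The one case needing care is $f(x)=0$ with $x\neq0_X$. Then $Q_{u,f}x=x$, and the defining inequality reads $\|x\|\le r\|x\|$. This is impossible since $r<1$, so such $x$ is excluded from $C^{\rm n}$, which matches the right-hand side.

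\textbf{Hilbert case.} Riesz representation forces $f=\langle\cdot,u\rangle$, so $Q_{u,f}x$ is the orthogonal projection of $x$ onto $u^\perp$. Write $x=su+w$ with $w\perp u$. Then $\|x\|^2=s^2+\|w\|^2$, and the defining conditions become:
\begin{itemize}
\item $C^{\rm n}$: $s\ge0$ and $\|w\|^2\le r^2(s^2+\|w\|^2)$, that is, $\|w\|\le \tfrac{r}{\sqrt{1-r^2}}\,s$;
\item $C^{\rm ax}$: $\|w\|\le\delta s$.
\end{itemize}
The two descriptions agree exactly when $\delta=r/\sqrt{1-r^2}$.

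\textbf{Non-coincidence in general.} I expect the main obstacle to be exhibiting a normed space where the two families genuinely differ. I would try $(\mathbb R^2,\|\cdot\|_\infty)$ with $u=(1,0)$ and $f=e_1^*$.
\begin{itemize}
\item $C^{\rm ax}(u,f,\delta)=\{|x_2|\le\delta x_1\}$.
\item $C^{\rm n}(u,f,r)$: on $\{|x_2|\le x_1\}$ we have $\|x\|_\infty=x_1$, so the condition reads $|x_2|\le r x_1$. Since $r<1$, this already forces $|x_2|\le x_1$, so $C^{\rm n}(u,f,r)=\{|x_2|\le rx_1\}=C^{\rm ax}(u,f,r)$.
\end{itemize}
So this axis-aligned pair does not separate the families, and I would need to take $u$ off-axis or use a non-symmetric norm. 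The claim itself is only ``need not coincide'', so I would settle for an explicit instance: pick $u=(1,1)$, $f(x)=x_1$ in $\ell_\infty^2$, and verify that $C^{\rm n}(u,f,r)$ is nonconvex for some $r$. Any nonconvex member of $\mathcal F_{\rm n}$ cannot equal any member of $\mathcal F_{\rm ax}$, because those are all convex. I am not sure in advance that this particular pair yields nonconvexity, and checking it is the part I would spend effort on.
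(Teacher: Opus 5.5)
Everything up to the Hilbert identity follows the paper's own route. You specialize Theorem~\ref{thm:two-normalization-principles} to $D_{u,f}=\|Q_{u,f}\cdot\|$ with $c_{u,f}=1$, normalize nonzero points by $f(x)$ to get the cone-generated descriptions, and use $\|x\|^2=f(x)^2+\|Q_{u,f}x\|^2$ in $H$. To conclude that the two Hilbert families coincide, add that $r\mapsto r/\sqrt{1-r^2}$ maps $(0,1)$ onto $(0,\infty)$.

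The paper gives no argument for the last sentence ("need not coincide"), and the route you sketch for it cannot work. In every two-dimensional normed space, every $C^{\rm n}(u,f,r)$ with $0<r<1$ is convex. Write $C^{\rm n}(u,f,r)=\cone(u+A)$ with $A:=\{z\in\ker f:\|z\|\le r\|u+z\|\}$; then $A$ is star-shaped about $0_X$. Indeed, for $0_X\neq z\in A$ the convex function $h(\tau):=\|\tau u+z\|$ satisfies $h(0)=\|z\|<\|z\|/r\le h(1)$. Hence $h(\tau)\ge h(1)\ge\|z\|/r$ for $\tau\ge1$, and this says exactly that $\lambda z\in A$ for $\lambda=1/\tau\in(0,1]$. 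When $\ker f$ is a line, $A$ is therefore an interval, and $\cone(u+A)$ is convex. For your pair $u=(1,1)$, $f=e_1^*$ in $\ell_\infty^2$, one gets $A=\{(0,s):-r\le s\le r/(1-r)\}$. In fact $C^{\rm n}((1,1),e_1^*,r)=C^{\rm ax}\bigl((1,1),f',r/(2-r)\bigr)$ with $f'(x)=(x_1+x_2)/2$, so this cone even lies in both families.

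There are two repairs.
\begin{enumerate}
\item \emph{Fixed norming pair.} Your $\ell_\infty^2$ computation already proves the version of the claim that contrasts with the Hilbert identity. The section $A$ is asymmetric, whereas $C^{\rm ax}((1,1),e_1^*,\delta)$ has the symmetric section $\{(0,s):|s|\le\delta\}$. So no $\delta$ gives $C^{\rm n}(u,f,r)=C^{\rm ax}(u,f,\delta)$ for this pair.
\item \emph{The families themselves.} To separate $\mathcal F_{\rm n}$ from $\mathcal F_{\rm ax}$ via nonconvexity you need $\dim X\ge3$. In $\ell_\infty^3$ take $u=(1,1,1)$, $f=e_1^*$, $r=\tfrac12$, so that $Q_{u,f}x=(0,x_2-x_1,x_3-x_1)$. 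The points $(1,2,0)$ and $(1,\tfrac12,\tfrac12)$ both lie in $C^{\rm n}(u,f,\tfrac12)$, with equality $\|Q_{u,f}x\|_\infty=\tfrac12\|x\|_\infty$ in each case. Their midpoint $m=(1,\tfrac54,\tfrac14)$, however, has $\|Q_{u,f}m\|_\infty=\tfrac34>\tfrac58=\tfrac12\|m\|_\infty$. So this member of $\mathcal F_{\rm n}$ is nonconvex, hence not in $\mathcal F_{\rm ax}$, all of whose members are convex.
\end{enumerate}
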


\begin{proof}
Apply Theorem~\ref{thm:two-normalization-principles} with
$D_{u,f}(x)=\|Q_{u,f}(x)\|$ and $c_{u,f}=1$.
The last identities follow by normalizing every nonzero element by
$f(x)$.
In the Hilbert case, the last identity follows from orthogonality. Indeed,
for a norming pair $(u,f)$ one has \(f(x)=\langle u,x\rangle\) and
\(Q_{u,f}x\perp u\), hence
\(
\|x\|^2
=
f(x)^2+\|Q_{u,f}x\|^2.
\)
Therefore,
\(
\|Q_{u,f}x\|\le r\|x\|
\iff
(1-r^2)\|Q_{u,f}x\|^2\le r^2 f(x)^2.
\)
Since points of \(C^{\rm n}(u,f,r)\) satisfy \(f(x)\ge0\), this is
equivalent to
\(
\|Q_{u,f}x\|
\le
\frac{r}{\sqrt{1-r^2}}\,f(x),
\)
and consequently
\(
C^{\rm n}(u,f,r)
=
C^{\rm ax}(
u,f,\frac{r}{\sqrt{1-r^2}}
).
\)
\end{proof}

\begin{remark}
\label{rem:scope-transversal-construction}
The construction is considerably more flexible than the two canonical
models. For example, if
$E_{u,f}:X\to[0,+\infty)$ is continuous and positively homogeneous
with $E_{u,f}(u)=0$, then
\[
D_{u,f}(x):=\|Q_{u,f}x\|+E_{u,f}(x),
\]
is transversally coercive with constant $1$ and therefore yields, by
Theorem~\ref{thm:two-normalization-principles}, both axially normalized
and norm-normalized directionally fine families. Likewise,
$D_{u,f}=p\circ Q_{u,f}$ is admissible whenever $p$ is a continuous
positively homogeneous functional on $\ker f$ satisfying
$p(z)\geq c\|z\|$ for some $c>0$. Thus transversal coercivity provides
a general construction principle rather than a single cone family.

All these families may be used directly in the approximation and
finite-separation results of Section~\ref{sec:separation-approximation};
when a finite collection $C_i=\{\varphi_i\geq0\}$ is selected, its
union is represented explicitly by $\bigcup_{i=1}^m C_i
=\left\{\max_{1\leq i\leq m}\varphi_i\geq0\right\}$.
\end{remark}

\subsection{Penalization, Recovery, and Convergence}
\label{subsec:UAD-optimization}
We conclude this section by illustrating how the explicit geometry of the
uniform axial deviation family can be used in optimization. Combining the
finite outer approximation of a locally compact cone with the associated
max-type defining function yields an error bound for the approximating
cone. This makes it possible to replace a conically constrained problem by
an unconstrained penalized problem and to recover from each bounded
penalized minimizer a feasible point of the original problem with explicit
distance and optimality estimates.

The following result makes this mechanism quantitative and also provides
an explicit estimate for the approximation of the optimal value.
\begin{theorem}
\label{thm:UAD_approximate_optimization}
Let $L\subset X$ be a nontrivial locally compact cone and let
$g:X\to\mathbb R$ be $L_g$-Lipschitz continuous, with $L_g>0$.
Consider $(P)\ \inf\{g(x):x\in L\}$. For every $\eta\in (0,1)$, there exist a finite uniform
axial deviation outer approximation $D_L^\eta$ of $L$ and a continuous
positively homogeneous function $\Phi_\eta:X\to\mathbb R$ such that
$L\subset D_L^\eta=\{\Phi_\eta\geq0\}$. Moreover, for every $M>L_g$, the penalized problem
$(P_\eta)\ \inf_{x\in X}\{g(x)+M[-\Phi_\eta(x)]_+\}$ satisfies $\inf(P_\eta)=\inf_{x\in D_L^\eta}g(x)$, and every solution
of $(P_\eta)$ belongs to $D_L^\eta$.

If $(P_\eta)$ has a solution $x_\eta$ with $\|x_\eta\|\leq R$ for some
$R>0$, set $v:=\inf_{x\in L}g(x)$ and $v_\eta:=\min(P_\eta)$. Then $v\in\mathbb R$ and there exists $l_\eta\in L$ such that $\|x_\eta-l_\eta\|\leq\eta\|x_\eta\|\leq R\eta$. Moreover,
\[
0\leq g(l_\eta)-v\leq L_g\eta\|x_\eta\|\leq L_gR\eta,\qquad
0\leq v-v_\eta\leq L_g\eta\|x_\eta\|\leq L_gR\eta.
\]
Consequently, $l_\eta$ is an $\varepsilon$-solution of $(P)$, that is,
$g(l_\eta)\leq v+\varepsilon$, whenever
$0<\eta\leq\varepsilon/(L_gR)$.
\end{theorem}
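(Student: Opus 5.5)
The plan is to build $D_L^\eta$ from Theorem~\ref{thm:finite-outer-approximation}, using the convex directionally fine family $\mathcal F_{\rm ax}$ of Corollary~\ref{cor:canonical-axial-families} with conic scale $\varepsilon:=\eta/2$. Concretely, for each $u\in B_L$ fix a norming functional $f_u$ and some $0<\delta<\eta/2$. The cone $C^{\rm ax}(u,f_u,\delta)$ then contains $u$ in its interior and is contained in $\cone B(u,\eta/2)$. Compactness of $B_L$ (Proposition~\ref{prop:local_compactness}) yields finitely many such cones $C_i=C^{\rm ax}(u_i,f_i,\delta_i)$ with $L\subset D_L^\eta:=\bigcup_i C_i$. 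By Theorem~\ref{thm:finite-outer-approximation}, every $x\in D_L^\eta\cap S_X$ satisfies $d(x,B_L)\le\eta$. Set $\varphi_i(x):=f_i(x)-\delta_i^{-1}\|Q_{u_i,f_i}x\|$ and $\Phi_\eta:=\max_i\varphi_i$, so that $D_L^\eta=\{\Phi_\eta\ge0\}$.

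The key step, and the one I expect to carry the argument, is an error bound with constant one:
\[
d(x,D_L^\eta)\le[-\Phi_\eta(x)]_+\qquad\text{for all }x\in X.
\]
This is where the explicit axial structure is used. Fix $i$ and $x$ with $\varphi_i(x)<0$, and put $s:=-\varphi_i(x)>0$ and $y:=x+su_i$. Since $Q_{u_i,f_i}u_i=0_X$ and $f_i(u_i)=1$, we get $Q_{u_i,f_i}y=Q_{u_i,f_i}x$ and $f_i(y)=f_i(x)+s$, hence $\varphi_i(y)=\varphi_i(x)+s=0$. Thus $y\in C_i$ and $\|x-y\|=s$. Consequently $d(x,C_i)\le[-\varphi_i(x)]_+$. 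Taking the minimum over $i$ gives the displayed bound, because $\min_i[-\varphi_i]_+=[-\max_i\varphi_i]_+$.

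Exact penalization then follows by the standard argument. Let $x\notin D_L^\eta$, so that $t:=-\Phi_\eta(x)>0$. For small $\tau>0$ pick $y\in D_L^\eta$ with $\|x-y\|\le t+\tau$. Then
\[
g(y)\le g(x)+L_g(t+\tau)<g(x)+Mt,
\]
where the strict inequality holds for $\tau$ small because $M>L_g$ and $t>0$. Hence every point outside $D_L^\eta$ is strictly beaten by a point of $D_L^\eta$, on which the penalty vanishes. This gives $\inf(P_\eta)=\inf_{D_L^\eta}g$ and shows that all minimizers of $(P_\eta)$ lie in $D_L^\eta$.

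For the recovery, let $x_\eta$ be a solution with $\|x_\eta\|\le R$. The case $x_\eta=0_X$ is trivial with $l_\eta:=0_X$. Otherwise $x_\eta/\|x_\eta\|\in D_L^\eta\cap S_X$. By compactness of $B_L$, pick $u\in B_L$ with $\|x_\eta/\|x_\eta\|-u\|\le\eta$, and set $l_\eta:=\|x_\eta\|u\in L$. Then $\|x_\eta-l_\eta\|\le\eta\|x_\eta\|\le R\eta$.

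Since $L\subset D_L^\eta$, we have $v_\eta=g(x_\eta)\le v\le g(l_\eta)\le g(x_\eta)+L_g\eta\|x_\eta\|$. This shows $v\in\R$ and gives both chains of inequalities. The $\varepsilon$-solution statement is immediate from $g(l_\eta)-v\le L_gR\eta\le\varepsilon$ when $\eta\le\varepsilon/(L_gR)$.
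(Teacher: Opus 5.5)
Your proof is correct, and its skeleton matches the paper's: the same finite outer approximation by $\mathcal F_{\rm ax}$, the same max-type $\Phi_\eta$, and the same unit-constant error bound obtained by translating $x$ along $u_i$. Your penalization step is the paper's estimate phrased pointwise, which also covers the case $\inf_{D_L^\eta}g=-\infty$ that the paper treats separately.

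The real difference is the recovery. The paper builds $D_L^\eta$ at full scale $0<\delta_i<\eta$. For an index $i$ with $x_\eta\in C_i$, it takes the axial projection $l_\eta:=f_i(x_\eta)u_i$. Then $\|x_\eta-l_\eta\|=\|Q_{u_i,f_i}x_\eta\|\le\delta_i f_i(x_\eta)\le\eta\|x_\eta\|$ follows directly from the defining inequality of $C_i$. You instead use the generic directional estimate of Theorem~\ref{thm:finite-outer-approximation} and rescale radially, $l_\eta:=\|x_\eta\|u$. That estimate carries the factor $2$ from Proposition~\ref{prop:propiedades_epsilon_entornos_conicos_de_rayos}, which is why you must run the construction at scale $\eta/2$.

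Your route shows that the recovery uses nothing specific to the axial family; only the error bound does. So it would transfer to any directionally fine family that admits such an error bound. The paper's route keeps the full scale and gives the recovered point by an explicit formula tied to the active cone.

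You also do not need compactness of $B_L$ to choose $u$. The proof of Theorem~\ref{thm:finite-outer-approximation} already gives $\|x_\eta/\|x_\eta\|-u_i\|\le\eta$ for any $i$ with $x_\eta\in C_i$, so $l_\eta=\|x_\eta\|u_i$ is equally explicit.
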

\begin{proof}
Fix $\eta\in (0,1)$ and let
$D_L^\eta=\bigcup_{i=1}^m C_i$ be a finite uniform axial deviation
outer approximation of $L$, where $C_i=\{\varphi_i\geq0\}$,
$\varphi_i(x)=f_i(x)-\delta_i^{-1}\|Q_{u_i,f_i}x\|$,
$u_i\in B_L$, and $0<\delta_i<\eta$. Set
$\Phi_\eta:=\max_{1\leq i\leq m}\varphi_i$. Then $\Phi_\eta$ is
continuous and positively homogeneous, and
$L\subset D_L^\eta=\{\Phi_\eta\geq0\}$.

We first show that
$d(x,D_L^\eta)\leq[-\Phi_\eta(x)]_+$ for every $x\in X$. This is
immediate if $\Phi_\eta(x)\geq0$. Otherwise, choose $i$ such that
$\Phi_\eta(x)=\varphi_i(x)$. Since
$Q_{u_i,f_i}(x+tu_i)=Q_{u_i,f_i}x$ and $f_i(u_i)=1$, we have
$\varphi_i(x+tu_i)=\varphi_i(x)+t$. Taking
$t=-\Phi_\eta(x)>0$ gives $x+tu_i\in C_i$, and therefore
$d(x,D_L^\eta)\leq t=[-\Phi_\eta(x)]_+$.

Put $\widehat v_\eta:=\inf_{D_L^\eta}g$. If
$\widehat v_\eta=-\infty$, then $\inf(P_\eta)=-\infty$, since the
penalty vanishes on $D_L^\eta$. Otherwise, Lipschitz continuity and the
preceding error bound give
$g(x)\geq\widehat v_\eta-L_g[-\Phi_\eta(x)]_+$. Hence, for $M>L_g$,
$g(x)+M[-\Phi_\eta(x)]_+
\geq\widehat v_\eta+(M-L_g)[-\Phi_\eta(x)]_+
\geq\widehat v_\eta$.
Since the penalty vanishes on $D_L^\eta$, in either case
$\inf(P_\eta)=\widehat v_\eta$.

If $x_\eta$ is a solution, then necessarily
$\widehat v_\eta\in\mathbb R$. Moreover,
$\Phi_\eta(x_\eta)<0$ would make the preceding inequality strict, a
contradiction. Thus $x_\eta\in D_L^\eta$ and
$v_\eta=g(x_\eta)=\widehat v_\eta$. Since
$L\subset D_L^\eta$ and $0_X\in L$, we have
$-\infty<v_\eta\leq v\leq g(0_X)<+\infty$, so $v\in\mathbb R$.

Choose $i$ such that $x_\eta\in C_i$ and write
$x_\eta=f_i(x_\eta)u_i+Q_{u_i,f_i}x_\eta$. Set
$l_\eta:=f_i(x_\eta)u_i$. Since $x_\eta\in C_i$,
$f_i(x_\eta)\geq\delta_i^{-1}\|Q_{u_i,f_i}x_\eta\|\geq0$, and hence
$l_\eta\in L$. Moreover,
$\|x_\eta-l_\eta\|=\|Q_{u_i,f_i}x_\eta\|
\leq\delta_i f_i(x_\eta)\leq\eta\|x_\eta\|\leq R\eta$.

Finally, $l_\eta\in L$ and $g(x_\eta)=v_\eta\leq v$. Therefore,
Lipschitz continuity gives
$0\leq g(l_\eta)-v
\leq g(l_\eta)-g(x_\eta)
\leq L_g\eta\|x_\eta\|
\leq L_gR\eta$.
The same estimate yields
$0\leq v-v_\eta\leq L_g\eta\|x_\eta\|\leq L_gR\eta$.
The last assertion follows immediately.
\end{proof}
The preceding estimates also yield a sequential stability property as the
directional approximation parameter tends to zero. In particular, uniform
boundedness of penalized minimizers, together with local compactness of
$L$, is enough to obtain convergence of optimal values and subsequential
convergence to solutions of the original problem.
\begin{corollary}
\label{cor:UAD_penalized_convergence}
Under the assumptions of
Theorem~\ref{thm:UAD_approximate_optimization}, let $\eta_n\downarrow0$
and, for each $n$, choose $\Phi_{\eta_n}$ as in that theorem. Fix
$M>L_g$ and suppose that $(P_{\eta_n})$ has a solution $x_n$ with
$\sup_n\|x_n\|\leq R$ for some $R>0$. Set
$v_n:=\min(P_{\eta_n})$. Then there exist $l_n\in L$ such that
\[
\|x_n-l_n\|\to0,\qquad g(l_n)\to v,\qquad v_n\to v.
\]
Moreover, $(x_n)$ and $(l_n)$ are relatively compact, $(P)$ has a
solution, and every cluster point of either sequence belongs to
$\operatorname{argmin}_{x\in L}g(x)$. In particular, if $(P)$ has a
unique solution $\bar l$, then $x_n\to\bar l$ and $l_n\to\bar l$.
\end{corollary}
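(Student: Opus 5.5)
The plan is to apply Theorem~\ref{thm:UAD_approximate_optimization} for each $n$ with $\eta=\eta_n$, and then use the local compactness of $L$ to extract convergent subsequences.

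\textbf{Step 1: the three limits.} For each $n$ the theorem gives $l_n\in L$ with
\[
\|x_n-l_n\|\le \eta_n\|x_n\|\le R\eta_n,
\qquad
0\le g(l_n)-v\le L_gR\eta_n,
\qquad
0\le v-v_n\le L_gR\eta_n,
\]
and $v\in\mathbb R$. Since $\eta_n\downarrow0$, this immediately yields $\|x_n-l_n\|\to0$, $g(l_n)\to v$ and $v_n\to v$.

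\textbf{Step 2: relative compactness.} The points $l_n$ lie in $L\cap R'B_X$ with $R':=R(1+\eta_1)$, because $\|l_n\|\le\|x_n\|+R\eta_n\le R(1+\eta_1)$. By Proposition~\ref{prop:local_compactness}(iv) this set is compact, so $(l_n)$ is relatively compact.

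Now let $(x_{n_k})$ be any subsequence. Pass to a further subsequence along which $l_{n_k}\to\bar l$. Since $\|x_{n_k}-l_{n_k}\|\to0$, also $x_{n_k}\to\bar l$. Hence every subsequence of $(x_n)$ has a convergent subsequence, so $(x_n)$ is relatively compact as well.

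\textbf{Step 3: cluster points are minimizers.} The sequences $(x_n)$ and $(l_n)$ have the same cluster points, because their difference tends to $0$. Let $\bar l$ be a common cluster point.
\begin{itemize}
\item $\bar l\in L$, since $L$ is closed by Remark~\ref{rem:locally_compact_cone_closed}.
\item By continuity of $g$ and Step~1, $g(\bar l)=\lim_k g(l_{n_k})=v$.
\end{itemize}
So $\bar l\in\operatorname{argmin}_L g$, and in particular $(P)$ has a solution. Existence of a cluster point is guaranteed by Step~2.

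\textbf{Step 4: uniqueness.} Suppose $(P)$ has a unique solution $\bar l$. Then $(l_n)$ is relatively compact and every cluster point equals $\bar l$. The standard subsequence argument gives $l_n\to\bar l$, and then $x_n\to\bar l$ because $\|x_n-l_n\|\to0$.

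The only mildly delicate point is Step~2. Compactness comes only from $L$, not from the sequence $(x_n)$, so it must be transferred to $(x_n)$ through $\|x_n-l_n\|\to0$. This requires the uniform bound on $\|l_n\|$ given above.
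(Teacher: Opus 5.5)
Your proposal is correct and follows essentially the same route as the paper. You apply Theorem~\ref{thm:UAD_approximate_optimization} for each $\eta_n$, place $(l_n)$ in the compact section $L\cap R(1+\eta_1)B_X$ (the paper uses $2RB_X$), transfer relative compactness to $(x_n)$ via $\|x_n-l_n\|\to0$, and identify cluster points as minimizers by continuity of $g$ together with closedness of $L$ (Remark~\ref{rem:locally_compact_cone_closed}), which the paper uses only implicitly.
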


\begin{proof}
By Theorem~\ref{thm:UAD_approximate_optimization}, there exist $l_n\in L$
such that $\|x_n-l_n\|\leq R\eta_n$, $0\leq g(l_n)-v\leq L_gR\eta_n$, $0\leq v-v_n\leq L_gR\eta_n$. Hence $\|x_n-l_n\|\to0$, $g(l_n)\to v$, and $v_n\to v$. Moreover, $\|l_n\|\leq R+R\eta_n$, so eventually $l_n\in L\cap2RB_X$, which is
compact by Proposition~\ref{prop:local_compactness}. Thus $(l_n)$ is
relatively compact and, since $\|x_n-l_n\|\to0$, so is $(x_n)$.

If $l_{n_k}\to\bar l$, then $\bar l\in L$ and, by continuity of $g$,
$g(\bar l)=v$; hence $\bar l\in\operatorname{argmin}_{x\in L}g(x)$, so
$(P)$ has a solution. If instead $x_{n_k}\to\bar x$, then
$l_{n_k}\to\bar x$, and the same argument gives
$\bar x\in\operatorname{argmin}_{x\in L}g(x)$. Thus every cluster point
of either sequence solves $(P)$. If the solution is unique, relative
compactness yields convergence of both sequences to $\bar l$.
\end{proof}

\section{Conclusions}

We have introduced directional fineness as a local property of cone families
that allows directional models to be assembled, through compactness of the
relevant set of directions, into finite approximation and separation
constructions. Beyond this local-to-finite mechanism, two complementary
features of the framework are particularly relevant. First, the
Bishop--Phelps family is directionally fine precisely under property~(G),
thereby linking the construction with the geometry of the norm. Second,
transversal coercivity provides an axial construction principle yielding
directionally fine families in arbitrary normed spaces, independently of such
geometric assumptions. The uniform axial deviation model also leads to quantitative penalization, recovery, and convergence results in conically
constrained optimization. Overall, the relevant geometry is essentially
directional: it is governed by normalized directions on the unit sphere,
rather than by the radial size of the sets involved.

Several questions remain open. Although the Kadec setting already provides a weak-compactness alternative, it would be interesting to determine whether analogous weakenings are possible more generally, through weak or cone-related compactness notions, and whether uniform versions of directional fineness allow further refinements. The role of Kadec-type renormings also deserves investigation. Finally, the explicit positively homogeneous separators suggest further applications to nonlinear scalarization, particularly to proper and approximate efficiency.

\section*{Acknowledgements}
The authors acknowledge financial support from the following projects:
PID2021-122126NB-C32, funded by MICIU/AEI/\nolinkurl{10.13039/501100011033} and by
FEDER, ``A way of making Europe''; and PID2025-168246NB-I00, funded by
MICIU/AEI/\nolinkurl{10.13039/501100011033} and by ERDF/EU.

\bibliographystyle{plain}
\bibliography{references}

\end{document}